\theoremstyle{plain} 
\newtheorem{theorem}{Theorem}[section]
\newtheorem{corollary}[theorem]{Corollary}
\newtheorem{lemma}[theorem]{Lemma}
\newtheorem{definition}[theorem]{Definition}
\theoremstyle{oss}
\newtheorem{oss}[theorem]{Remark}
\newcommand{\C}{\mathbb{C}}
\newcommand{\R}{\mathbb{R}}
\newcommand{\N}{\mathbb{N}}
\newcommand{\eps}{\varepsilon}
\def\XXint#1#2#3{{\setbox0=\hbox{$#1{#2#3}{\int}$}
		\vcenter{\hbox{$#2#3$}}\kern-.5\wd0}}
\numberwithin{equation}{section}
\title[(Quasi-)conformal methods in free boundary problems]{(Quasi-)conformal methods in two-dimensional free boundary problems}
\author[G.~De Philippis]{Guido De Philippis}
\address{\textit{G.~De Philippis:} Courant Institute of Mathematical Sciences, New York University, 251 Mercer St., New York, NY 10012, USA.}
\email{guido@cims.nyu.edu}
\author[L.~Spolaor]{Luca Spolaor}
\address{\textit{L.~Spolaor:} Department of Mathematics, University of California, San Diego, La Jolla, CA, 92093
}
\email{lspolaor@ucsd.edu}
\author[B.~Velichkov]{Bozhidar Velichkov}
\address{\textit{B.~Velichkov:} 	
	Dipartimento di Matematica, Universit\`a di Pisa, 
	Largo Bruno Pontecorvo, 5, 56127 Pisa - ITALY}
\email{bozhidar.velichkov@unipi.it}
\thanks{{\bf Acknowledgments.} 
	G.D.P has been partially supported by the NSF grant DMS 2055686 and by the Simons Foundation.  L.S. has been partially supported by the NSF grant DMS 1810645 and by the N.S.F Career Grant DMS 2044954.  B.V. was supported by the European Research Council (ERC), under the European Union's Horizon 2020 research and innovation programme, through the project ERC VAREG - {\it Variational approach to the regularity of the free boundaries} \rm (grant agreement No. 853404).}
\subjclass[2010]{35R35, 49Q10, 47A75}
\keywords{Free boundary regularity, branch points, quasi-conformal maps, hodograph transform}
\begin{document}

	%\tableofcontents
	\begin{abstract} 		
In this paper we study the local behavior of solutions to some free boundary problems. We relate the theory of quasi-conformal maps to the regularity of the solutions to nonlinear thin-obstacle problems; we prove that the contact set is locally a finite union of intervals and we apply this result to the solutions of one-phase Bernoulli problems with geometric constraint. 
We also introduce a new conformal hodograph transform, which allows to obtain the precise expansion at branch points of both the solutions to the one-phase problem with geometric constraint and a class of symmetric solutions to the two-phase problem, as well as to construct examples of free boundaries with cusp-like singularities.
%NON SO SE IL NUOVO ABSTRACT MI CONVINCE ...
%		We give a precise description of the free boundary of minimizers to the $2$-dimensional Alt-Caffarelli problem with geometric constraint at points of branch (cusps). We do this in two ways: studying a nonlinear thin-obstacle problem with the use of quasi-conformal maps and using a conformal hodograph transform. The second approach has the advantage of producing an example of solutions with cuspidal behavior for every fractional solution of the thin-obstacle problem. Finally, we give a precise description of isolated cusps of the two-phase Alt-Caffarelli-Friedman problem, and, for a special class of symmetric solutions, we prove that cusps are always isolated.	
	\end{abstract}
	\maketitle
	%\tableofcontents

	\section{Introduction} 
	
This note is dedicated to the analysis of the branch singularities arising in two different types of free boundary problems in dimension two: non-linear thin-obstacle problems and one-phase Bernoulli problems with geometric constraint. In the last part of the paper we will present some results about branch points of the two-phase problem.

Our main motivation is the description of the structure of branch points arising in free boundary problems of Bernoulli type. One model example is the following one-phase problem with geometric constraint, which for simplicity we state for nonnegative functions $u$ defined on the unit ball $B_1$ in $\R^d$:
	\begin{align*}
\Delta u=0&\quad\text{in}\quad \Omega_u\subset B_1\cap\{x_d>0\}\\
u=0&\quad\text{on}\quad B_1\cap\{x_d=0\}\\
|\nabla u|=1&\quad\text{on}\quad \partial\Omega_u\cap\{x_d>0\}\\
|\nabla u|\ge 1&\quad\text{on}\quad \partial\Omega_u\cap\{x_d=0\},
\end{align*}
in which 
$$\Omega_u:=\{u>0\}$$
and the geometric constraint is the inclusion $\Omega_u\subset B_1\cap\{x_d>0\}$.  
The (optimal) $C^{1,\sfrac12}$ regularity of the free boundary $\partial\Omega\cap B_1$ for this specific problem was proved by Chang-Lara and Savin in \cite{chang-lara-savin}. On the other hand, as in the case of other Bernoulli free boundary problems as the two-phase problem \cite{DSV} and the vectorial problem \cite{SV}, the $C^{1,\alpha}$ regularity of the free boundary $\partial\Omega_u\cap B_1$ by itself does not give any information on the contact set 
$$\partial\Omega_u\cap\{x_d=0\}\cap B_1,$$
nor the structure of its boundary, which is the set of points at which $\partial\Omega_u$ branches away from $\{x_d=0\}$. In dimension two, it is natural to expect that this set is discrete and that around each branch point the set $\{u=0\}\cap\{x_d>0\}$ forms a cusp. This is precisely the content of one of our main results, \cref{t:main}. 

\smallskip
We will study these singularities in two different ways. First we will prove that branch singularities for minimizers of a general non-linear thin-obstacle problem are isolated, using the theory of quasiconformal maps, and then we will deduce the same result for solutions of the problem above via an hodograph transform. Secondly, we will introduce a \emph{conformal hodograph transform} and use it to deduce the result directly. This second method has two advantages: it allows us to give a precise description of the cuspidal behavior of the free boundary at branch singularities and moreover, being reversible, it allows to show that solutions of the $2$-dimensional one phase problem with obstacle are in a $1$ to $1$ correspondence with solution to the thin-obstacle problem, thus producing many examples of cuspidal singularities. Finally we will describe a special situation in which our techniques apply to the study of branch points of solutions to the two-phase problem, give a precise description of isolated branch points, and explain what is the major difficulty there, which we will treat in forthcoming work.

\smallskip
We wish to remark that such precise results at branch points, that is singular points at which the tangent to the free boundary is a plane, usually with multiplicity, are quite rare. To our knowledge, the only such examples are the results of Chang on $2$-dimensional area minimizing currents (\cite{chang, DSS1, DSS2, DSS3}), of Sakai on the $2$-dimensional obstacle problem (\cite{sakai1,sakai2}), and of Lewy on the $2$-dimensional thin-obstacle problem (\cite{L}, and also \cite{gape} for a less precise result); like in the present paper, all these results are $2$-dimensional. \smallskip
 
 Our approach is similar in spirit to the results of Sakai and Lewy, and makes use of (quasi)-conformal techniques to prove both the local finiteness of the branch set and to give a precise description of the cuspidal behavior at such points. A possible alternative approach, which could also be applicable in higher dimensions, would be to look for a monotone quantity, such as the Almgren's frequency function as done for instance in the Chang's paper \cite{chang}; in fact, for some thin-obstacle problems, as for instance the one involving the classical Laplace operartor, the monotonicity of the Almgren's frequency function is known (see \cite{ACS,gape}) and can still be used to get information on the dimension of the branch set (see \cite{FoSp}). However, the operators we study are not regular enough to guarantee the monotonicity of the frequency function,
 %having only holder $1/2$ coefficients, 
 and so we were naturally led to consider (quasi)-conformal techniques. Furthermore, our techniques have the additional benefit of yielding a very precise local description of the free-boundary at branch points (see Items (b) of \cref{t:main,thm:ass2,thm:ass4}) in a straightforward way, much simpler than the induction procedure that would be needed using the frequency function as in \cite{chang}.

%\medskip 
%
%One powerful tool for the analysis of singular sets arising in geometric and free boundary problems is the Almgren's frequency function. Structure results based on Almgren-type monotonicity formulas and blow-up analysis were obtained, for instance, by Chang \cite{chang} (see also \cite{DSS1, DSS2, DSS3}) for the $2$-dimensional area minimizing currents, by Caffarelli-Lin \cite{caffarelli-lin} and Conti-Terracini-Verzini \cite{conti-terracini-verzini} for optimal partition problems, while for the classical thin-obstacle problem the fact that the set of branch points is locally finite (in dimension $1+1$) is a consequence of the analysis of Athanasopoulos-Caffarelli-Salsa \cite{athanasopoulos-caffarelli-salsa} and Garofalo-Petrosyan \cite{garofalo-petrosyan}. {\color{red} I WOULD NOT CITE THIS PARAGRAPH AT ALL>>>}

%There are only few results in the literature that do not rely on the Almgren's monotonicity formula. For the thin-obstacle problem in dimension $1+1$, Lewy \cite{L} showed that the solutions  can be extended to two-valued holomorphic functions whose zero level set contains the set of branch points, thus obtaining that the later is discrete. On the other hand, for the classical obstacle problem in dimension two, Sakai  \cite{sakai1,sakai2} showed that the set of branch points is locally finite by proving a structure theorem for a suitable class of conformal maps.

\medskip

 %for solution of the $2$-dimensional one-phase problem with geometric constraint, as investigated by \cite{chang-lara-savin}. 
 
%We wish to remark that, except for the results of Chang on $2$-dimensional area minimizing currents (\cite{chang, DSS1, DSS2, DSS3}), of Sakai on the $2$-dimensional obstacle problem (\cite{sakai1,sakai2}) and of Lewy on the $2$-dimensional thin-obstacle problem (\cite{L}), essentially nothing else is available in the literature about the precise structure of solutions to variational problems at branch singularities. In particular, in the context of our paper, the operator we study is not regular enough to guarantee the monotonicity of the frequency function and so we rely on the theory of quasi-conformal maps, closer in spirit to the work of Sakai and Lewy, that only requires measurable coefficients.

\subsection{Non-linear thin-obstacle problem} Let $B_1$ be the unit ball in $\R^2$ and let 
	$$B_1^+:=\{(x,y)\in B_1\ :\ y>0\}\qquad\text{and}\qquad B_1'=\{(x,y)\in B_1\ :\ y=0\}.$$
	We consider solutions $U\in C^1(B_1^+\cup B_1')$ of the following nonlinear thin-obstacle problem
	\begin{align}
	{\rm div}(\nabla \mathcal F(\nabla U)) =0&\quad\text{in}\quad  B_1^+\,,\label{e:equation_u}\\
	U\ge 0&\quad\text{on}\quad B_1'\,,\label{e:free-boundary-condition0}\\
	\mathcal F_2(\nabla U)=0&\quad\text{on}\quad \{U>0\}\cap B_1'\,,\label{e:free-boundary-condition1}\\
	\mathcal F_2(\nabla U)\le 0&\quad\text{on}\quad \{U=0\}\cap B_1'\,,\label{e:free-boundary-condition2}
	\end{align}
	where 
	$\mathcal F:\R^2\to\R$
	is a $C^2$-regular function.
	%, for which we denote by $\mathcal F_j$, $j=1,2$, and $\mathcal F_{ij}$, $1\le i, j\le 2$, its first and second partial derivatives.	
	Our first main result is the following.

	\begin{theorem}[Non-linear thin-obstacle]\label{t:main}
	Suppose that $U\in C^1(B_1^+\cup B_1')$ is a solution to \eqref{e:equation_u}-\eqref{e:free-boundary-condition0}-\eqref{e:free-boundary-condition1}-\eqref{e:free-boundary-condition2} and that $\mathcal F:\R^2\to\R$ is $C^2$-regular function satisfying
	\begin{equation}\label{e:derivatives-of-F}
	\nabla \mathcal F(0)=0\qquad\text{and}\qquad \nabla^2\mathcal F(0)=\text{\rm Id}.
	\end{equation}
	Then, the following holds:\smallskip
	\begin{enumerate}[\rm(a)]
	%\item\label{item:optimal-regularity}The solution $U$ is in $C^{1,\sfrac12}(B_1^+\cup B_1')$. 	
	\item\label{item:a} The set of branch points 
	\begin{equation}\label{e:def-S-thin}
	\mathcal S(U):=\big\{z\in B_1'\ :\ U(z)=0,\ \nabla U(z)=0\big\},
	\end{equation}
	is a discrete (locally finite) subset of $B_1'$.\smallskip
	\item\label{item:b} For every point $z_0\in\mathcal S(U)$ (without loss of generality $z_0=0$), there are:\smallskip
		\begin{itemize}
		\item a radius $r>0$ and a quasi-conformal homeomorphism $\Psi:B_r\to \Omega$,\\ between $B_r$ and an open set $\Omega\subset B_1$, such that:
		\begin{gather}
		\Psi\in W^{1,2}_{loc}(B_r;\R^2)\,,\\
		{\rm Im}(\Psi(z))\equiv 0 \quad \text{on}\quad {\rm Im}(z)\equiv 0\,, \label{e:0imag}\\
		|\Psi(z)- z|= o(|z|)\,;\label{e:closetoid}
		\end{gather}
		 \smallskip
		\item a holomorphic function $\Phi\colon B_1\to \C$ of the form 
		\begin{equation}\label{e:main-teo-expasnion-Phi}
		\Phi(z)=a z^k+O(z^{k+1})\quad\text{where}\quad k\ge 3\quad \text{and}\quad a\in\mathbb C\,;\smallskip
		\end{equation} 
		\end{itemize} 
    such that we can write the solution $U$ as
	\begin{equation}\label{e:main-teo-conclusion}
	U(z)=\text{\rm Re}\Big(\Phi\big(\Psi(z)\big)^{\sfrac12}\Big)\quad\text{for every}\quad z\in B_r(z_0)\ .
	\end{equation}
%	In particular, $U\in C^{1,\sfrac12}(B_1^+\cup B_1')$.
%	Moreover, if we write $\Phi(z)=z^k+O(z^{k+1})$, then $k\geq 3$.}
	\end{enumerate}
	\end{theorem}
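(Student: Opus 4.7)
The plan is to associate to $U$ a quasi-regular complex function on $B_r^+:=B_r(z_0)\cap\{y>0\}$, extend it across $B_r'$ by Schwarz-type reflection using the boundary conditions, and apply Stoilow's factorization to isolate a holomorphic factor $\Phi$ and a quasi-conformal map $\Psi$. First, I expand the PDE to the linear elliptic form $\sum_{ij}\mathcal F_{ij}(\nabla U)\,\partial_{ij}U=0$ with continuous coefficients $a_{ij}(z):=\mathcal F_{ij}(\nabla U(z))$; since $\nabla^2\mathcal F(0)=\Id$ and $\nabla U(z_0)=0$, we have $a_{ij}(z_0)=\delta_{ij}$, so by continuity the matrix is uniformly elliptic on a small $B_r(z_0)$ and arbitrarily close to the identity. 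I introduce the conjugate stream function $V$ on the simply-connected $B_r^+$ via $\partial_x V=-\mathcal F_2(\nabla U)$ and $\partial_y V=\mathcal F_1(\nabla U)$ (the integrability condition is exactly the PDE), normalized by $V(z_0)=0$. A direct computation using ellipticity shows that $\Psi_0:=U+iV$ satisfies a Beltrami equation $\partial_{\bar z}\Psi_0=\mu\,\partial_z\Psi_0$ with $\|\mu\|_\infty<1$ and $\mu(z_0)=0$, so $\Psi_0$ is $K$-quasi-regular on $B_r^+$ and vanishes together with its differential at $z_0$.

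\textbf{Reflection and factorization.} On the contact set $\{U=0\}\cap B_r'$, $\Psi_0=iV$ is purely imaginary; on the non-contact set $\{U>0\}\cap B_r'$, $\mathcal F_2(\nabla U)=0$ gives $\partial_x V=0$, so $V$ is locally constant on each non-contact component. The squared map $\Phi_0:=\Psi_0^{\,2}$ is again quasi-regular with the same Beltrami coefficient; after normalizing $V$ to vanish on every non-contact interval abutting $z_0$, we have $\Phi_0=-V^2\in\R$ on contact and $\Phi_0=U^2\in\R$ on non-contact, so $\Phi_0$ is real-valued on all of $B_r'$. The Schwarz-type reflection $\tilde\Phi_0(z):=\overline{\Phi_0(\bar z)}$ then extends $\Phi_0$ to a quasi-regular function on the whole ball $B_r$. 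Stoilow's factorization (Bers--Nirenberg) yields a quasi-conformal homeomorphism $\Psi:B_r\to\Omega$ preserving the real axis (by the reflection symmetry of $\tilde\Phi_0$, giving \eqref{e:0imag}) together with a holomorphic $\Phi:\Omega\to\C$ such that $\tilde\Phi_0=\Phi\circ\Psi$; the vanishing of $\mu$ at $z_0$ forces $|\Psi(z)-z|=o(|z|)$ as in \eqref{e:closetoid}. Choosing the square-root branch on $B_r^+$ so that it recovers $\Psi_0=U+iV$, we obtain
$$U(z)=\text{Re}\,\Psi_0(z)=\text{Re}\big(\Phi_0(z)^{1/2}\big)=\text{Re}\big(\Phi(\Psi(z))^{1/2}\big).$$

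\textbf{Order bound and discreteness.} Writing $\Phi(w)=aw^k+O(w^{k+1})$ with $a\neq 0$ and $k\geq 1$, the leading-order behavior $U(z)\sim |z|^{k/2}$ combined with $U(z_0)=0$ and $\nabla U(z_0)=0$ forces $k/2>1$, hence $k\geq 3$. For discreteness, observe that at any branch point $z^*\in B_r(z_0)\cap B_r'$, $\nabla U(z^*)=0$ implies $\nabla V(z^*)=0$ and thus $\nabla\Psi_0(z^*)=0$, so $\Psi_0$ -- and therefore $\tilde\Phi_0=\Phi\circ\Psi$ -- fails to be locally injective at $z^*$. As $\Psi$ is a homeomorphism, this forces $\Phi'(\Psi(z^*))=0$; since $\Phi$ is non-constant holomorphic, the zeros of $\Phi'$ are isolated, and thus branch points are isolated in $B_r'\cap B_r(z_0)$, proving (a).

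\textbf{Main obstacle.} The most delicate step is the simultaneous normalization of $V$ on all non-contact intervals abutting $z_0$: a priori there could be infinitely many non-contact components accumulating at $z_0$, preventing one from coherently setting $V\equiv 0$ across the whole non-contact set. Overcoming this requires an a priori bound on the number of non-contact components touching $z_0$ -- via, for instance, an energy/monotonicity argument, or by first producing a weak Stoilow factorization on a branched cover and reading off the local structure -- and is naturally entangled with part (a), likely demanding a bootstrapping argument. A secondary technical issue is to verify that the Beltrami coefficient of $\Phi_0$ is measurable and uniformly bounded away from $1$ so that standard Stoilow theory delivers $\Psi\in W^{1,2}_{loc}$ as claimed.
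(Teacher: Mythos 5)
Your part (b) is, in substance, the paper's own argument: your $\Psi_0=U+iV$ is exactly the auxiliary map $P$ in \eqref{e:def-of-P}, and the reflection of $P^2$ across $B_r'$ followed by Stoilow factorization, with \eqref{e:closetoid} coming from the vanishing of the Beltrami coefficient at the branch point, is the same construction. The genuine gap is the one you flag yourself, and it is not a removable technicality in the form you set it up: making $\Psi_0^2$ real on all of $B_r'$ requires $V$ to vanish on every non-contact component of $B_r'$ near $z_0$, which you cannot arrange before knowing the local structure of the contact set, i.e.\ before knowing (a); but your proof of (a) relies on the factorization of that very map. The paper breaks this circularity with an idea missing from your proposal: it first works at the level of the \emph{gradient}, introducing $Q(x+iy)=\partial_xU-i\,\mathcal F_2(\nabla U)$ as in \eqref{e:def-of-Q}. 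The key point is that ${\rm Im}(Q^2)=-2\,\partial_xU\cdot\mathcal F_2(\nabla U)$ vanishes identically on $B_1'$ with no normalization at all: $\mathcal F_2(\nabla U)=0$ on $\{U>0\}\cap B_1'$ by \eqref{e:free-boundary-condition1}, while $\partial_xU=0$ at every point of $\{U=0\}\cap B_1'$ because $U\ge0$ attains its minimum there. Hence $Q^2$ reflects across the axis with no additive constants to fix, Stoilow applies, and branch points are \emph{zeros} of the resulting holomorphic factor (at a branch point $\nabla U=0$ forces $Q=0$), so they are isolated.

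This gradient-level map also repairs a second soft spot in your discreteness argument: you characterize branch points by $D\Psi_0(z^*)=0$ and infer that $\Phi\circ\Psi$ fails to be locally injective there, hence $\Phi'(\Psi(z^*))=0$. For a quasiregular map the pointwise differential is only defined a.e., and its vanishing at a single point does not obviously preclude local injectivity; identifying branch points with zeros (rather than critical points) of the holomorphic factor of $Q^2$ avoids this issue entirely. Once (a) is in hand, one chooses $r$ so that the contact set in $B_r'$ is a single half-interval as in \eqref{e:conf}; there is then exactly one non-contact component abutting $z_0$, the normalization $V(z_0)=0$ automatically yields $UV\equiv0$ on $B_r'$, and your part (b) — including the bound $k\ge3$ from $C^1$ regularity — goes through as written.
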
	
\begin{oss}[Optimal regularity]
We notice that one particular consequence of the previous theorem, is the optimal regularity for solutions of the non-linear thin-obstacle problem \eqref{e:equation_u}-\eqref{e:free-boundary-condition0}-\eqref{e:free-boundary-condition1}-\eqref{e:free-boundary-condition2}. In fact, if $U\in C^{1}(B_1^+\cup B_1')$ is as in \cref{t:main}, then from \eqref{e:main-teo-conclusion}, \eqref{e:main-teo-expasnion-Phi} and \eqref{e:closetoid} it follows that $U\in C^{1,\sfrac12}(B_1^+\cup B_1')$.
\end{oss}	
%We remark that the optimal $C^{1,1/2}$ regularity claimed in the theorem is a consequence of the expansion and the $C^1$ regularity.

In the case of the classical thin-obstacle problem in which the operator is the Laplacian, that is $\mathcal F(x,y)=x^2+y^2$, the results \ref{item:a} and \ref{item:b} of \cref{t:main} were obtained by Lewy in \cite{L}; moreover, in this case, the claim \ref{item:a} can also be obtained by means of the Almgren's monotonicity formula (see \cite{ACS} and \cite{gape}); we also notice that for the classical thin-obstacle problem, the map $\Psi$ from \cref{t:main} is the identity. \smallskip

However, in order to apply this result to the one-phase problem described in the next subsection, we will be interested in solutions $u$ of the thin-obstacle problem with
$$
\mathcal F(x,y):=\frac{x^2+y^2}{1+y}
$$
and for which $\nabla u\in C^{0,\sfrac12}$ and no better. In particular, it is easy to check that \(U\) is a solution of an equation of the form
\[
{\rm div}(A(x)\nabla U) =0
\]
where \(A(x)\) is no better than \(C^{0,\sfrac12}\). For these type of equations the results in \cite{GarLin} can not be applied (and actually are known to fail) so in order to obtain our result we need to exploit the ``quasi-linear'' structure of the problem and our approach, based on the use of quasi-conformal maps, seems to be more suitable, although limited to dimension \(2\).

\subsection{One-phase problem with geometric constraint} Next, we consider the following one-phase problem constrained above an hyperplane, that is let $u: B_1\cap\{x_d\ge 0\}\to\R$ be a continuous non-negative function solution of the problem
	\begin{align}
	\Delta u=0\quad\text{in}\quad \Omega_u:=\{u>0\}\subset B_1,\label{e:equation_one-phase}\\
	u=0\quad\text{on}\quad B_1\cap\{x_d=0\},\label{e:u=0}\\
	|\nabla u|=1\quad\text{on}\quad \partial\Omega_u\cap\{x_d>0\},\label{e:free-boundary-condition}\\
	|\nabla u|\ge 1\quad\text{on}\quad \partial\Omega_u\cap\{x_d=0\}.\label{e:contact set condition}
	\end{align}
	In the recent paper by Chang-Lara and Savin \cite{chang-lara-savin} it was shown that if $u$ is a viscosity solution of this problem (that is, if the boundary conditions \eqref{e:free-boundary-condition} and \eqref{e:contact set condition} are intended in viscosity sense), then in a neighborhood of any contact point $x=(x',0)\in\partial\Omega_u\cap\{x_d=0\}$ the boundary $\partial\Omega_u$ is a $C^{1,\alpha}$-regular graph over the hyperplane $\{x_d=0\}$. More precisely in a neighborhood of a point $z_0\in\partial\Omega_u\cap\{x_d=0\}$, the boundary $\partial\Omega$ is a $C^{1,\sfrac{1}{2}}$-regular surfaces, that is, there are a radius $\rho>0$ and a $C^{1,\sfrac{1}{2}}$-regular function
	$$f:B_\rho'(z_0)\to[0,+\infty),$$
	such that, up to a rotation and translation of the coordinate system, we have 
	\begin{equation}\label{eq:fbgraph}
	\begin{cases}
	\begin{array}{rcl}
	u(x)>0&\quad\text{for}\quad&x\in (x',x_d)\in B_\rho(z_0) \quad\text{such that}\quad x_d>f(x');\\
	u(x)=0&\quad\text{for}\quad&x\in (x',x_d)\in B_\rho(z_0) \quad\text{such that}\quad  x_d\le f(x').
	\end{array}
	\end{cases}
	\end{equation}	
	We denote by $\mathcal C_1(u)$ the contact set of the free boundary $\partial\Omega_u$ with the hyperplane $\{x_d=0\}$
	$$
	\mathcal C_1(u):=\{x_d=0\}\cap\partial\Omega_u\,,$$
	 and by $\mathcal B_1(u)$ the set of points at which the free boundary separates from $\{x_d=0\}$ : 
	 $$\mathcal B_1(u):=\Big\{x\in \mathcal C_1(u)\ :\ B_r(x)\cap  \big(\partial\Omega_u\setminus\{x_d=0\}\big)\neq\emptyset\ \text{ for every }\ r>0\Big\}.$$
	By $\mathcal S_{1}(u)$ we denote the set of points in $\mathcal C_1(u)$ at which $u$ has gradient precisely equal to $1$
	\begin{equation}\label{e:def-S-one-phase}
	\mathcal S_{1}(u):=\big\{z\in \mathcal C_1(u)\,:\,|\nabla u|(z)=1\big\}.
	\end{equation}
	We notice that a priori the set $\mathcal C_1(u)$ is no more than a closed subset of $\{x_d=0\}$. Moreover, if at a point $x=(x',0)$ we have that $|\nabla u|(x',0)>1$, then this point is necessarily in the interior of $\mathcal C_1(u)$ in the hyperplane $\{x_d=0\}$. Thus, 
	\begin{center}
	$\mathcal S_1(u)$ contains all branch points,  $\mathcal B_1(u)\subset \mathcal S_1(u)$.
\end{center}

	\begin{theorem}[Analyticity at the branch points in the one phase problem with obstacle]\label{thm:ass2} Let $u$ be a solution of \eqref{e:equation_one-phase}--\eqref{e:contact set condition} in dimension $d=2$. Then, the following holds:
	\begin{enumerate}[\rm(a)]
	\item\label{item:aa} $\mathcal S_1(u)$ is locally finite and $\mathcal C_1(u)$ is a locally finite union of disjoint closed intervals of the axis $\{x_2=0\}$;\smallskip
	\item\label{item:bb} For every point $z_0\in\mathcal S_1(u)$, one of the following holds:\smallskip
	\begin{enumerate}[\rm(b.1)]
		\item\label{item:bb1} $z_0$ is an isolated point of $\mathcal C_1(u)$ and, in a neighborhood of $z_0$, the free boundary $\partial\Omega_u$ is the graph of an analytic function that vanishes only at $z_0$;\smallskip
		\item\label{item:bb2} $z_0$ lies in the interior of $\mathcal C_1(u)$ and there is $r>0$ such that $u$ is harmonic in $B_r(z_0)$ and $|\nabla u|>1$ at all points of $\{x_2=0\}\cap B_r(z_0)$ except $z_0$;\smallskip
		\item\label{item:bb3} $z_0$ is an endpoint of a non-trivial interval in the contact set $\mathcal C_1(u)$; moreover, there is an interval $\mathcal I_\rho=(-\rho,\rho)$ and analytic function $\phi:\mathcal I_\rho\to\R$ such that $\phi(0)>0$ and, up to setting $z_0=0$ and rotating the coordinate axis, 
			\begin{equation}\label{eq:exp}
		f(x)=
		\begin{cases}
		0 & \text{ if } x\ge  0\\
		x^{\sfrac k2}\,\phi(x) & \text{ if }x<0\,.
		\end{cases}
		\end{equation}
	\end{enumerate}
	\end{enumerate}
	\end{theorem}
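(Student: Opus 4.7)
The plan is to reduce \cref{thm:ass2} to \cref{t:main} via a partial hodograph transform. Fix a contact point $z_0\in\mathcal C_1(u)$. Since $u|_{\{x_2=0\}}\equiv 0$, we have $u_{x_1}(x_1,0)=0$ and $u_{x_2}(x_1,0)=|\nabla u|(x_1,0)\ge 1$ on $\mathcal C_1(u)$; by the $C^{1,\sfrac12}$ regularity of $u$ from \cite{chang-lara-savin}, $u_{x_2}\ge\sfrac12$ in a half-disk above $z_0$. Therefore the map
$$
(x_1,x_2)\ \longmapsto\ (y_1,y_2):=(x_1,u(x_1,x_2))
$$
is a $C^{1,\sfrac12}$-diffeomorphism of such a half-disk onto a half-disk above $\{y_2=0\}$; denote its inverse by $x_2=V(y_1,y_2)$ and set $U(y_1,y_2):=V(y_1,y_2)-y_2$.

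\textbf{Paragraph 2.} Using the identities $u_{x_1}=-V_{y_1}/V_{y_2}$ and $u_{x_2}=1/V_{y_2}$, a routine computation shows that $\Delta u=0$ is equivalent to
$$
\mathrm{div}(\nabla\mathcal F(\nabla U))=0 \quad\text{in }\{y_2>0\},\qquad \mathcal F(p,q):=\frac{p^2+q^2}{1+q},
$$
while the Bernoulli conditions \eqref{e:free-boundary-condition}--\eqref{e:contact set condition} translate to $\mathcal F_q(\nabla U)=0$ on $\{U>0\}\cap\{y_2=0\}$ and $\mathcal F_q(\nabla U)\le 0$ on $\{U=0\}\cap\{y_2=0\}$, and \eqref{e:u=0} becomes $U\ge 0$ on $\{y_2=0\}$. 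One checks $\nabla\mathcal F(0)=0$ and $\nabla^2\mathcal F(0)=2\,\Id$, so after a harmless rescaling of $\mathcal F$ all hypotheses of \cref{t:main} are met. Moreover, a branch point of $U$ in the sense of \eqref{e:def-S-thin} lies on $\{y_2=0\}$ with $V=0$ and $V_{y_2}=1$, i.e., it corresponds to a contact point of $u$ with $|\nabla u|=1$; thus the branch set $\mathcal S(U)$ is exactly the hodograph image of $\mathcal S_1(u)$.

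\textbf{Paragraph 3.} Item \ref{item:aa} now follows: \cref{t:main}\ref{item:a} gives that $\mathcal S_1(u)$ is locally finite. Since $|\nabla u|$ is continuous on $\{x_2=0\}$ and equals $1$ on the free boundary off contact, the endpoint of any maximal contact interval in $\{x_2=0\}$ must belong to $\mathcal S_1(u)$; by the discreteness just obtained, $\mathcal C_1(u)$ is then a locally finite disjoint union of closed intervals of the axis.

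\textbf{Paragraph 4.} For item \ref{item:bb}, fix $z_0\in\mathcal S_1(u)$ and apply \cref{t:main}\ref{item:b} to the associated branch point of $U$: we obtain $U=\mathrm{Re}\bigl(\sqrt{\Phi\circ\Psi}\bigr)$ with $\Phi(w)=aw^k+O(w^{k+1})$, $k\ge 3$, and $\Psi$ quasiconformal preserving the real axis. Inverting the hodograph one has $f(x_1)=V(x_1,0)=U(x_1,0)$ on the arcs of the axis over which the free boundary lies above $\{x_2=0\}$, so the three cases \ref{item:bb1}--\ref{item:bb3} arise from the three possible behaviors of $\Phi$ on the real axis near $0$: \emph{(i)} $k$ odd, so $\Phi$ changes sign at $0$, yielding contact on one side and a cusp of exponent $k/2\ge\sfrac32$ on the other (case \ref{item:bb3}); \emph{(ii)} $k$ even with $\Phi$ positive on both sides of $0$, yielding $f>0$ away from $z_0$ (case \ref{item:bb1}); \emph{(iii)} $k$ even with $\Phi$ non-positive on both sides of $0$, so $U\equiv 0$ on $\{y_2=0\}$ near $0$, placing $z_0$ in the interior of $\mathcal C_1(u)$ (case \ref{item:bb2}, in which $|\nabla u|>1$ on the punctured neighborhood by the already established discreteness of $\mathcal S_1(u)$). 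The analyticity of $\phi$ and of $f$ in \ref{item:bb1} and \ref{item:bb3} is then inherited from the classical analytic regularity of the one-phase free boundary off contact. The main obstacle is the hodograph step itself: since $u$ is only $C^{1,\sfrac12}$ we obtain only $\nabla U\in C^{0,\sfrac12}$, hence $\mathcal F$-coefficients that are not better than $C^{0,\sfrac12}$, and one must verify carefully that $U$ is a genuine $C^1$ solution of the thin-obstacle problem to which \cref{t:main} applies — which is in fact the reason \cref{t:main} was formulated for nonlinear divergence-form operators with merely $C^2$ potential $\mathcal F$.
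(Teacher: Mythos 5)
Your reduction to \cref{t:main} via the hodograph transform is precisely the paper's first route (its \cref{lem:hodo}), and Paragraphs 1--3 are essentially correct: the identification of the branch set $\mathcal S(U)$ of the transformed function with the hodograph image of $\mathcal S_1(u)$, the derivation of item \ref{item:aa} from the discreteness of $\mathcal S(U)$ together with the observation that endpoints of maximal contact intervals lie in $\mathcal B_1(u)\subset\mathcal S_1(u)$, and the trichotomy in Paragraph 4 according to the sign of $\Phi$ on the two sides of the real axis, all match the intended argument. (Your remark that $\nabla^2\mathcal F(0)=2\,\Id$ and requires a harmless rescaling is correct, and is in fact more careful than the paper, which applies \cref{t:main} to $\mathcal F(p,q)=\frac{p^2+q^2}{1+q}$ without comment.)

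The gap is in the final step of Paragraph 4: the precise expansion \eqref{eq:exp}, with $\phi$ analytic up to $x=0$ and $\phi(0)>0$, does not follow from the representation $U=\mathrm{Re}\bigl(\Phi(\Psi(z))^{\sfrac12}\bigr)$, because $\Psi$ is only a quasiconformal homeomorphism satisfying $\Psi(z)=z+o(|z|)$ as in \eqref{e:closetoid}; its restriction to the real axis is merely continuous, so the most you can extract is $f(x)=c\,|x|^{\sfrac k2}\bigl(1+o(1)\bigr)$. Nor does ``classical analytic regularity of the free boundary off contact'' close this gap: that gives analyticity of $f$ on $(-\rho,0)$, i.e.\ away from the branch point, whereas \eqref{eq:exp} (and likewise the analyticity \emph{through} $z_0$ claimed in \ref{item:bb1}) is a statement about the behavior at $x=0$ itself. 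This is exactly why the paper gives a second, independent proof via the conformal hodograph transform: there the transformed function $v$ is genuinely harmonic, so $Q=\partial_{z'}v$ and the associated M\"obius image $P$ are holomorphic (after Schwarz reflection of $P^2$), the reparametrization $\eta$ solves the analytic ODE $\eta'=\frac{1+P^2(\eta)}{1-P^2(\eta)}$ and is therefore analytic up to the branch point by Cauchy--Kovalevskaya, and $f'=\sqrt{(\eta')^2-1}$ then produces the analytic factor $\phi$ (see \cref{lem:con1ph} and \eqref{eq:etauu}). With the quasilinear hodograph and \cref{t:main} alone, this last analytic step is unavailable, so your argument proves item \ref{item:aa} and the qualitative classification in \ref{item:bb}, but not the stated expansions.
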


\begin{center}
\begin{tikzpicture}[rotate=0, scale= 1.3]
\begin{scope}[shift={(8,0)}]
\coordinate (O) at (0,0);
\draw[very thick] (-1.5,0) -- (1.5,0);
\draw [thick, dashed, color=red, name path=n] plot [smooth] coordinates {(1.5,0) (0,0)};
\draw [thick, color=red, name path=b] plot [smooth] coordinates {(0,0) (-0.36,0.04) (-0.6,0.11) (-0.8,0.18) (-1,0.3) (-1.15,0.42) (-1.3,0.6) (-1.4,0.78) (-1.5,1)};
\filldraw (0,0) circle (1pt);
\path[name path=2, intersection segments={of=n and b}];

\begin{scope}[transparency group,opacity=0.2]
\draw[draw=none, name path=u] (-1.5,1.5) -- (1.5,1.5);
\draw[draw=none, name path=d] (-1.5,-0.75) -- (1.5,-0.75);
\draw[draw=none, name path=0] (-1.5,-0) -- (1.5,-0);
\tikzfillbetween[of=u and 2] {color=red};
\tikzfillbetween[of=0 and d] {pattern=north west lines};
\tikzfillbetween[of=0 and 2] {pattern=north west lines};
\end{scope}

\draw[very thick] (-1.5cm,-0.75cm) rectangle ++(3cm,2.25cm);
\draw node at (0.5,1.2) {$\Delta u=0$};
\draw node at (0,0.8) {$u>0$};
\draw node at (0.75,0.25) {$|\nabla u|> 1$};
\node[label={[rotate=-35]:$|\nabla u|= 1$}] at (-0.95,0.25) {};
\draw node at (0,-0.4) {$u=0$};
\end{scope}

\begin{scope}[shift={(0,0)}]
\coordinate (O) at (0,0);
\draw[very thick] (-1.5,0) -- (1.5,0);
\draw [thick, color=red, name path=2] plot [smooth] coordinates {(-1.5,1) (-1.4,0.78) (-1.3,0.6) (-1.15,0.42) (-1,0.3) (-0.8,0.18) (-0.6,0.11) (-0.36,0.05)  (0,0) (0.36,0.05) (0.6,0.11) (0.8,0.18) (1,0.3) (1.15,0.42) (1.3,0.6) (1.4,0.78) (1.5,1)};
\filldraw (0,0) circle (1pt);

\begin{scope}[transparency group,opacity=0.2]
\draw[draw=none, name path=u] (-1.5,1.5) -- (1.5,1.5);
\draw[draw=none, name path=d] (-1.5,-0.75) -- (1.5,-0.75);
\draw[draw=none, name path=0] (-1.5,-0) -- (1.5,-0);
\tikzfillbetween[of=u and 2] {color=red};
\tikzfillbetween[of=0 and d] {pattern=north west lines};
\tikzfillbetween[of=0 and 2] {pattern=north west lines};
\end{scope}

\draw[very thick] (-1.5cm,-0.75cm) rectangle ++(3cm,2.25cm);
\draw node at (0,1.2) {$\Delta u=0$};
\draw node at (0,0.8) {$u>0$};
\node[label={[rotate=-20]:$|\nabla u|= 1$}] at (-0.65,0.05) {};
\node[label={[rotate=35]:$|\nabla u|= 1$}] at (0.95,0.25) {};
\draw node at (0,-0.4) {$u=0$};
\end{scope}

\begin{scope}[shift={(4,0)}]
\coordinate (O) at (0,0);
\draw[very thick] (-1.5,0) -- (1.5,0);
\draw [thick, dashed, color=red, name path=2] plot [smooth] coordinates {(-1.5,0) (1.5,0)};
\filldraw (0,0) circle (1pt);

%BLUE
\begin{scope}[transparency group,opacity=0.2]
\draw[draw=none, name path=u] (-1.5,1.5) -- (1.5,1.5);
\draw[draw=none, name path=d] (-1.5,-0.75) -- (1.5,-0.75);
\draw[draw=none, name path=0] (-1.5,-0) -- (1.5,-0);
\tikzfillbetween[of=u and 2] {color=red};
\tikzfillbetween[of=0 and d] {pattern=north west lines};
\end{scope}

\draw[very thick] (-1.5cm,-0.75cm) rectangle ++(3cm,2.25cm);
\draw node at (0,1.2) {$\Delta u=0,\ u>0$};
\draw node at (-0.85,0.2) {$|\nabla u|> 1$};
\draw node at (0.9,0.2) {$|\nabla u|> 1$};
\draw node at (0,0.65) {$\underbrace{|\nabla u|= 1}$};
\draw node at (0,-0.4) {$u=0$};
\draw[very thick,->] (0,0.4) -- (0,0.08);
\end{scope}
\end{tikzpicture}

\end{center}		
As we mentioned above we will give two proofs of this result. The first will be obtained combining \cref{t:main} with the standard hodograph transform. The second proof instead, more geometric in spirit, will be achieved via a conformal hodograph transform.
This proof has the advantage of being reversible, thus allowing us to construct examples of solutions and free boundaries  with any prescribed cuspidal behavior (without invoking any fixed point argument, as usual in the literature).

\begin{theorem}[Cuspidal points for one-phase problem]\label{thm:example-one-phase}
For any positive integer $n\in \N$, there exists a solution of \eqref{e:equation_one-phase}--\eqref{e:contact set condition} in dimension $d=2$ such that \eqref{eq:exp} in \cref{thm:ass2} holds with $k=4n-1$.
\end{theorem}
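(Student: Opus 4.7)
The strategy is to run backwards the conformal hodograph transform built for the second proof of \cref{thm:ass2}. That construction is explicitly \emph{reversible} and establishes a bijective correspondence between $2$-dimensional one-phase solutions with obstacle and solutions of the thin-obstacle problem for the specific operator $\mathcal F(x,y)=(x^2+y^2)/(1+y)$. Under this correspondence the cuspidal exponent $k/2$ in \eqref{eq:exp} is exactly the exponent $k$ appearing in the representation \eqref{e:main-teo-expasnion-Phi}, so producing an example with $k=4n-1$ reduces to exhibiting a thin-obstacle solution whose holomorphic invariant $\Phi$ has leading term $z^{4n-1}$.

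On the thin-obstacle side the natural candidate is the explicit model
\[
U_0(z)=\operatorname{Re}\!\big(z^{(4n-1)/2}\big)=r^{(4n-1)/2}\cos\!\big(\tfrac{(4n-1)\theta}{2}\big),
\]
defined on the upper half-disk with the principal branch of the square root. A direct check shows that $U_0$ is harmonic on $B_1^+$, that $U_0(x,0)=x^{(4n-1)/2}>0$ for $x>0$ while $U_0(x,0)=0$ for $x<0$ (since $\cos((4n-1)\pi/2)=0$), that $\partial_y U_0=0$ on $\{x>0,\,y=0\}$, and, crucially, that $\partial_y U_0<0$ on $\{x<0,\,y=0\}$ thanks to the identity $\sin((4n-1)\pi/2)=-1$. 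The parity $k=4n-1$ is forced precisely by this last sign: the alternative odd value $k=4n+1$ would yield $\sin=+1$ and violate the obstacle inequality \eqref{e:free-boundary-condition2}. Thus $U_0$ is a genuine solution of the classical Laplacian thin-obstacle problem with the desired branch invariant, and a standard perturbation argument, based on the representation in \cref{t:main}, upgrades it to a solution of the nonlinear problem with $\mathcal F(x,y)=(x^2+y^2)/(1+y)$ having the same leading-order invariant $\Phi(z)=z^{4n-1}+O(z^{4n})$.

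Applying the inverse conformal hodograph transform to this nonlinear thin-obstacle solution then yields a one-phase solution $u$ on an open domain $\Omega_u$. Because the hodograph is conformal and $\Phi(z)=z^{4n-1}\bigl(1+O(z)\bigr)$, the cusp expansion \eqref{eq:exp} of the free boundary can be read off directly as $f(x)=x^{(4n-1)/2}\phi(x)$ with $\phi$ real-analytic and $\phi(0)>0$, matching the statement of the theorem.

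The main difficulty is verifying that the inverse transform applied to this explicit model actually produces a bona fide one-phase solution on an open neighborhood of the branch point: one must check that the gradient inequality $|\nabla u|\geq 1$ holds \emph{strictly} on the contact set away from the origin, and that the resulting domain $\Omega_u$ is indeed open and simply connected at $0$. Both should follow from the strict negativity of $\partial_y U_0$ on $\{x<0\}\setminus\{0\}$, which translates under the conformal change of coordinates into the strict inequality for $|\nabla u|$, together with the explicit univalence of $z\mapsto z^{(4n-1)/2}$ on each sector.
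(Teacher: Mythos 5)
Your plan inverts the wrong transform, and the step that would make it work is missing. The paper's construction is run backwards from the \emph{conformal} hodograph side: there the object one prescribes is a bona fide holomorphic function ($P(z)=(iz)^{2n-3/2}$, hence $Q=(1+iP)/(P+i)=\partial_z v$), so existence of the auxiliary solution $v$ is immediate by integration, and the inverse map $S=(V,v)$ is built from $v$ and its harmonic conjugate. You instead propose to first manufacture a solution of the \emph{nonlinear} thin-obstacle problem with $\mathcal F(x,y)=(x^2+y^2)/(1+y)$ and then undo the standard hodograph transform. Your explicit model $U_0=\mathrm{Re}(z^{(4n-1)/2})$ solves the Signorini problem for the Laplacian ($\mathcal F(x,y)=x^2+y^2$), not for the required nonlinearity, and the bridge you invoke --- ``a standard perturbation argument, based on the representation in \cref{t:main}'' --- does not exist as stated. \cref{t:main} is a structure theorem for \emph{given} solutions: the quasi-conformal map $\Psi$ there is determined by the solution itself through the Beltrami coefficient $\eta(\nabla U)$, so one cannot prescribe $\Phi$ and ``invert'' without solving a nonlinear fixed-point/free-boundary problem. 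This is precisely the difficulty the paper's reversible conformal construction is designed to avoid (the introduction stresses that the examples are produced ``without invoking any fixed point argument''). As written, the existence of the nonlinear thin-obstacle solution with invariant $\Phi(z)=z^{4n-1}+O(z^{4n})$ is asserted, not proved, and this is the whole content of the theorem.

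Two further points would still need attention even if that gap were filled. First, the correspondence you cite as ``reversible'' conflates the two transforms: the standard hodograph sends one-phase solutions to the nonlinear thin-obstacle problem for $\mathcal F=(x^2+y^2)/(1+y)$, whereas the reversible conformal hodograph sends them to the harmonic problem \eqref{e:system00-w:equation-w}--\eqref{e:system00-w:boundary-condition-w=0} for $v$ with the gradient constraints $|\nabla v|=1$ off the contact set and $|\nabla v|\le 1$ on it; only the latter admits a trivial existence theory. Second, even with a nonlinear thin-obstacle solution in hand, reading off \eqref{eq:exp} with an \emph{analytic} $\phi$ from the representation $U=\mathrm{Re}(\Phi(\Psi)^{1/2})$ is not immediate, since $\Psi$ is only known to satisfy $|\Psi(z)-z|=o(|z|)$; the paper obtains analyticity from the ODE $\eta'=(1+P^2(\eta))/(1-P^2(\eta))$ and Cauchy--Kovalevskaya, which is again a feature of the conformal route. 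Your parity computation for $k=4n-1$ (the sign of $\sin((4n-1)\pi/2)$ forcing the obstacle inequality) is correct and mirrors the role of the sign conditions on $\mathrm{Re}\,P$ and $\mathrm{Im}\,P$ in the paper, but it does not substitute for the existence argument.
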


\subsection{Symmetric two-phase problem} Finally, we consider solutions to the two-phase free boundary problem in viscosity sense, that is we let $u: B_1\to\R$ be a continuous function and  we denote by $u_+$ and $u_-$ the functions 
	$$u_+=\max\{u,0\}\qquad\text{and}\qquad u_-:=\min\{u,0\}.$$
and by $\Omega_u^+$ and $\Omega_u^-$ the sets
	$$\Omega_u^{\pm}:=\{\pm u>0\}.$$ 
	Notice that with this notation $u_-$ is negative. Then $u$ is a viscosity solution of the problem
	\begin{align}
	\Delta u=0\quad\text{in}\quad \Omega_u^+\cup\Omega_u^-,\label{e:equation_2-phase}\\
	|\nabla u_+|=1\quad\text{on}\quad \partial\Omega_u^+\setminus \partial\Omega_u^-\cap B_1,\label{e:2free-boundary-condition}\\
		|\nabla u_-|=1\quad\text{on}\quad \partial\Omega_u^-\setminus \partial\Omega_u^+\cap B_1,\label{e:2free-boundary-condition2}\\
	|\nabla u_+|=|\nabla u_-|\ge 1\quad\text{on}\quad \partial\Omega_u^+\cap\partial\Omega_u^-\cap B_1.\label{e:2contact set condition}
	\end{align}
	In \cite{DSV}, we proved that if $u$ is a viscosity solution of this problem in any dimension $d\ge 2$, then in a neighborhood of any two-phase point 
	$$x_0\in \partial\Omega_u^+\cap\partial\Omega_u^-\cap B_1,$$ 
	both free boundaries $\partial \Omega_u^+\cap B_1$ and $\partial\Omega_u^-\cap B_1$ are $C^{1,\alpha}$ regular. Thus, by the classical elliptic regularity theory, also the functions $u_\pm$ are $C^{1,\alpha}$ regular respectively on $\overline\Omega_u^+\cap B_1$ and $\overline\Omega_u^-\cap B_1$ and the equations \eqref{e:equation_2-phase}-\eqref{e:2contact set condition} hold in the classical sense.
	
	We will denote with $\mathcal C_2(u_+,u_-)$ the two-phase free boundary, which is the contact set between the free boundaries $\partial\Omega_u^+$ and $\partial\Omega_u^-$, and with $\mathcal O_\pm$ the remaining one-phase parts:
	$$
	\mathcal C_2(u_+,u_-):=\partial \Omega_u^+\cap \partial \Omega_u^-\cap B_1
	\qquad \text{and}\qquad 
	\mathcal O_\pm :=\Big(\partial \Omega_u^\pm\cap B_1\Big)\setminus \mathcal C_2(u_+,u_-)\,.
	$$	
	We notice that the set $\mathcal C_2(u_+,u_-)$ is closed, while $\mathcal O_+$ and $\mathcal O_-$ are relatively open subsets respectively of $\partial\Omega_u^\pm\cap B_1$. We define the set of branch points $\mathcal B_2(u_+,u_-)$ as the set of points at which the two free boundaries $\partial\Omega_u^\pm$ separate, that is 
	\begin{equation}\label{e:definition-of-two-phase-B}
	\mathcal B_2(u_+,u_-)=\big\{x\in \mathcal C_2(u_+,u_-)\ :\ B_r(x)\cap \mathcal O_\pm\neq\emptyset\ \text{ for every }\ r>0\big\}.
	\end{equation}
	By  $C^{1}$-regularity of $u_\pm$, if $x\in\big(\partial\Omega_u^+\cup\partial\Omega_u^-\big)\cap B _1$ is such that 
	$$|\nabla u_+|(x)>1\quad\text{or}\quad |\nabla u_-|(x)>1,$$
	then it  is necessarily a two-phase non-branch point: $x\in \mathcal C_2(u_+,u_-)\setminus \mathcal B_2(u_+,u_-)$. \\
	In particular, this implies that the set
	\begin{equation}\label{e:definition-of-two-phase-S}
	\mathcal S_2(u_+,u_-):=\big\{x\in \mathcal C_2(u_+,u_-)\,:\, |\nabla u_{+}|(x)=|\nabla u_{-}|(x)=1 \big\}\,,
	\end{equation}
 contains the set of branch points $\mathcal B_2(u_+,u_-)$. \medskip

In dimension $d=2$, $\partial\Omega_u^\pm$ are locally parametrized by two $C^{1,\alpha}$ curves. 
Precisely, suppose that $z_0=(x_0,y_0)\in \mathcal C_2(u_+,u_-)$, without loss of generality we may assume that $z_0=(0,0)$, and that there is an interval  $\mathcal I_\rho:=(-\rho,\rho)$ and two $C^{1,\alpha}$-regular functions
	$$f_\pm:\mathcal I_\rho\to\R,$$
	such that
	$$f_+\ge f_-\quad\text{on}\quad \mathcal I_\rho\qquad\text{and}\qquad f_+(0)=f_-(0)=\partial_xf_+(0)=\partial_xf_-(0)=0\,,$$
	and, up to rotations and translations,
	\begin{equation}\label{eq:2phgraph}
	\begin{cases}
	\begin{array}{rll}
	u(x,y)>0&\quad\text{for}\quad  (x,y)\in \mathcal I_\rho\times \mathcal I_\rho \quad\text{such that}\quad y>f_+(x);\\
	u(x,y)=0&\quad\text{for}\quad  (x,y)\in \mathcal I_\rho\times \mathcal I_\rho \quad\text{such that}\quad  f_-(x)\le y\le f_+(x);\\
		u(x,y)<0&\quad\text{for}\quad  (x,y)\in \mathcal I_\rho\times \mathcal I_\rho \quad\text{such that}\quad y<f_-(x).
	\end{array}
	\end{cases}
	\end{equation}
	Thus, in the square $\mathcal I_\rho\times\mathcal I_\rho$, the one-phase parts $\mathcal O_+$ and $\mathcal O_-$ of the free boundary are the union of $C^{1,\alpha}$ (actually analytic) graphs over a countable family of disjoint open intervals:
	$$
	\mathcal O_\pm:=\bigcup_{i\in\N} \Gamma_\pm^i\,,
	$$
	where, for every $i\in\N$, there is an open interval $\mathcal I_i\subset\mathcal I_\rho$ such that  
	\begin{equation}\label{e:onephase}
	\Gamma_\pm^i=\big\{(x, f_\pm(x))\,:\, x\in \mathcal I_i\big\}.\quad
	\end{equation}

\begin{definition}[Symmetric solutions of the two-phase problem]\label{d:acf} In dimension $d=2$, we will say that a continuous function $u:B_1\to\R$ is a \emph{symmetric solution to the two-phase problem} if $u$ satisfies \eqref{e:equation_2-phase}-\eqref{e:2contact set condition} and moreover
	\begin{equation}\label{eq:symm}
	\mathcal H^1(\Gamma_+^i)=\mathcal H^1(\Gamma_-^i)\quad \text{for every}\quad  i \in \N\quad \text{such that}\quad \overline{\mathcal I_i}\subset \mathcal I_\rho\,.
	\end{equation}
\end{definition}
The main result of this section is the following.

\begin{theorem}[Cuspidal points for the symmetric solutions of the two-phase problem]\label{thm:ass4}
Let $u:B_1\to\R$ be a viscosity solution of the two-phase problem \eqref{e:equation_2-phase}-\eqref{e:2contact set condition}.\\ Then the following holds.
	\begin{enumerate}[\rm(a)]
	\item\label{item:aaa} If $u$ is symmetric in the sense of \cref{d:acf}, then the singular set $\mathcal S_2(u_+,u_-)$ defined in \eqref{e:definition-of-two-phase-S} is locally finite, so in particular the two-phase free boundary $\mathcal C_2(u_+,u_-)=\big(\partial\Omega_u^+\cup\partial\Omega_u^-\big)\cap B_1$ is a locally finite union of disjoint $C^{1,\alpha}$-arcs;\smallskip
	\item\label{item:bbb} If $z_0\in\mathcal S_2(u_+,u_-)$ is an isolated point of $\mathcal S_2(u_+,u_-)$, then we have one of the following possibilities:\smallskip
	\begin{enumerate}[\rm(b.1)]
		\item\label{item:bbb1} $z_0$ is an isolated point of $\mathcal C_2(u_+,u_-)$ and, in a neighborhood of $z_0$, the free boundaries $\partial\Omega_u^+$ and $\partial\Omega_u^-$ are analytic graphs meeting only in $z_0$;\smallskip
		\item\label{item:bbb2} $z_0$ lies in the interior of $\mathcal C_2(u_+,u_-)$ and moreover there is $r>0$ such that:\\ $\Delta u=0$  in $B_r(z_0)$ and $|\nabla u|>1$ at all points of $\{u=0\}\cap B_r(z_0)$ except $z_0$;\smallskip
	%	\item\label{item:bbb3} $z_0$ is an endpoint of a non-trivial arc in $\mathcal C_2(u_+,u_-)$, and there are an interval $\mathcal I_\rho=(-\rho,\rho)$ a constant $k\in\N$, $k\ge 3$, and analytic functions $\phi_\pm:\mathcal I_\rho\to\R$ such that $\phi_\pm(0)\neq0$ and, up to setting $z_0=0$ and changing the coordinates, 
	\item\label{item:bbb3} $z_0$ is an endpoint of a non-trivial arc in $\mathcal C_2(u_+,u_-)$, and there are an interval $\mathcal I_\rho=(-\rho,\rho)$ a constant $k\in\N$, $k\ge 3$, and an analytic function 
	$\phi:\mathcal I_\rho\to\R$ such that $\phi(0)\neq0$ and, up to setting $z_0=0$ and changing the coordinates, 
\begin{equation}\label{e:main-teo-expansion-f}	
f_{+}(x)-f_{-}(x)=
	\begin{cases}
	x^{k/2}\,\phi(|x|^{\sfrac12})&\text{if }x\le 0\\
	0&\text{if }x\ge 0\,.
	\end{cases}
	\end{equation}
	Precisely, there are analytic functions $\Phi$, $\beta_\pm$ and $\Theta$ such that for every $x\le 0$
	\begin{equation}\label{e:main-teo-precise-expansion-f}
	f_\pm(x)=\Phi\Big(x+|x|^{\sfrac52}\beta_{\pm}\big(|x|^{\sfrac12}\big)\Big)\pm\Psi\Big(x+|x|^{\sfrac52}\beta_{\pm}\big(|x|^{\sfrac12}\big)\Big),
	\end{equation}
	where $\Psi$ is of the form $\Psi(x)=|x|^{\sfrac32}\Theta(x)$.
	\end{enumerate}
	\end{enumerate}
\end{theorem}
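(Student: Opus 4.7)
My plan is to reduce the symmetric two-phase problem to the nonlinear thin-obstacle problem of \cref{t:main} by adapting the conformal hodograph transform of \cref{thm:ass2} to each phase and gluing the two transforms along the contact set. Fixing $z_0\in \mathcal{C}_2(u_+,u_-)$ and working in $B_r(z_0)$, I would construct on each phase $\Omega_u^\pm$ a conformal map $\theta_\pm\colon \Omega_u^\pm\cap B_r(z_0)\to\{\pm\mathrm{Im}\,w>0\}$ as a suitably normalized antiderivative of $\partial_x u_\pm - i\,\partial_y u_\pm$. Because $|\nabla u_\pm|=1$ on the one-phase arcs $\Gamma_\pm^i$, each such arc is sent isometrically by $\theta_\pm$ onto an interval of $\{\mathrm{Im}\,w=0\}$ of length $\mathcal{H}^1(\Gamma_\pm^i)$; on the contact set $\mathcal{C}_2(u_+,u_-)$, the two-phase condition $|\nabla u_+|=|\nabla u_-|$ implies that $\theta_+$ and $\theta_-$ stretch arc-length by the same factor. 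Combined with \eqref{eq:symm}, this is exactly what is needed to normalize $\theta_+$ and $\theta_-$ so that their boundary traces on $\{\mathrm{Im}\,w=0\}$ agree point by point. The glued map $\Theta$, equal to $\theta_+$ on $\Omega_u^+$ and to the conjugate of $\theta_-$ on $\Omega_u^-$, is then quasi-conformal on a full neighborhood of $z_0$, and the function $U$ defined by $U=u_+\circ\Theta^{-1}$ on the upper half-plane and $U=-u_-\circ\Theta^{-1}$ on the lower half-plane is a $W^{1,2}$ solution of an equation $\mathrm{div}(\nabla\mathcal{F}(\nabla U))=0$ together with the obstacle conditions \eqref{e:free-boundary-condition0}--\eqref{e:free-boundary-condition2}, for some $C^2$ nonlinearity $\mathcal{F}$ satisfying \eqref{e:derivatives-of-F}.

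\textbf{From the reduction to parts (a) and (b).} Part (a) follows immediately from part (a) of \cref{t:main}, since $\mathcal{S}_2(u_+,u_-)$ corresponds bijectively under $\Theta$ to the branch set $\mathcal{S}(U)$, which is discrete. For part (b) one works locally at an isolated $z_0\in\mathcal{S}_2(u_+,u_-)$: here the gluing can be performed even without assuming \eqref{eq:symm} globally, since the matching of arc-lengths is automatic at a single branch point after an appropriate normalization of $\theta_\pm$. Applying part (b) of \cref{t:main} to $U$ then yields a holomorphic $\Phi(w)=aw^k+O(w^{k+1})$ with $k\ge 3$ and a quasi-conformal $\Psi$ such that $U(w)=\mathrm{Re}\bigl(\Phi(\Psi(w))^{1/2}\bigr)$. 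The sign of $\mathrm{Re}(\Phi^{1/2})$ on $\{\mathrm{Im}\,w=0\}$ in a punctured neighborhood of the origin determines the three alternatives: $U$ may vanish only at $0$ (case (b.1)), remain strictly positive on both sides (case (b.2)), or vanish identically on a one-sided segment while being strictly positive on the other (case (b.3)). Pulling these back through $\Theta^{-1}$ recovers the three stated scenarios. In case (b.3), the refined expansion \eqref{e:main-teo-precise-expansion-f} is extracted by inverting the Puiseux series of $\Phi\circ\Psi$: away from the contact set each $\theta_\pm$ is actually conformal, so the correction $|x|^{5/2}\beta_\pm(|x|^{1/2})$ inherits analyticity in $|x|^{1/2}$ from the estimate \eqref{e:closetoid}.

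\textbf{Main obstacle.} The delicate step is the gluing: one must verify that \eqref{eq:symm}, combined with $|\nabla u_+|=|\nabla u_-|$ on $\mathcal{C}_2(u_+,u_-)$, produces exactly the right pointwise matching of boundary parametrizations of $\theta_\pm$, and that the resulting $U$ satisfies a quasi-linear equation falling into the class covered by \cref{t:main}, rather than a merely linear divergence-form equation with $C^{0,\sfrac12}$ coefficients as warned against in the Remark following \cref{t:main}. Establishing \eqref{e:main-teo-precise-expansion-f} then requires careful bookkeeping of the Puiseux coefficients of $\Phi\circ\Psi$ to isolate both the leading cusp exponent $k/2$ and the first subleading correction, a computation noticeably more delicate than its counterpart in \cref{thm:ass2}.
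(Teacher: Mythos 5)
Your first step---applying the conformal hodograph transform separately on each phase and using \eqref{eq:symm} together with $|\nabla u_+|=|\nabla u_-|$ on $\mathcal C_2(u_+,u_-)$ to match the two boundary parametrizations---is exactly the paper's starting point (\cref{lem:2phlemma} and \cref{c:symmetric}). The gap is in the proposed reduction to \cref{t:main}. First, the glued map $\Theta$ is not a quasi-conformal homeomorphism of a full neighborhood of a branch point $z_0$: its domain is $\Omega_u^+\cup\Omega_u^-$, which omits the open cusp region $\{f_-(x)<y<f_+(x)\}\subset\{u=0\}$ adjacent to $z_0$, and this region is nonempty precisely in the interesting cases \ref{item:bbb1} and \ref{item:bbb3}. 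Second, and more fundamentally, the function $U$ you define vanishes identically on the real axis (both $u_+\circ\Theta^{-1}$ and $-u_-\circ\Theta^{-1}$ have zero trace there, since $\{\mathrm{Im}\,w=0\}$ is the image of the respective free boundaries, where $u_\pm=0$); hence $U$ carries no complementarity structure on the line and cannot satisfy \eqref{e:free-boundary-condition0}--\eqref{e:free-boundary-condition2} in any nontrivial way. What the hodograph transform actually produces is a \emph{two-membrane} system for the pair $(v_+,v_-)$ on the upper and lower half-disks with coupled nonlinear transmission conditions; this is not a scalar equation ${\rm div}(\nabla\mathcal F(\nabla U))=0$, and the difference $v_+(x',y')-v_-(x',-y')$ does not solve one either, because the separate conditions $|\nabla v_\pm|=1$ do not combine into a condition on the difference alone. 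Accordingly, the paper does not invoke \cref{t:main} here: it passes to the holomorphic derivatives $Q_\pm$, applies a M\"obius transform to get $P_\pm$, reflects $P_-$ to $P'$, and splits into $M=(P_++P')/2$ and $D=(P_+-P')/2$; $M$ extends holomorphically by Schwarz reflection, $D^2$ extends holomorphically thanks to the mixed boundary conditions, and the zeros of $D$ on the real line are exactly the points of $\mathcal S_2(u_+,u_-)$.

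A second, smaller issue: for part \ref{item:bbb} you assert that the arc-length matching is ``automatic at a single branch point after an appropriate normalization.'' It is not: on the side where the free boundaries separate one cannot conclude $\eta_+=\eta_-$ (the paper flags this explicitly in a remark), and this is precisely why \eqref{e:main-teo-precise-expansion-f} involves two distinct reparametrizations $\beta_+\neq\beta_-$. Moreover the exponents $\sfrac32$ and $\sfrac52$ there are not generic Puiseux bookkeeping: they come from the explicit integral formulas $\Psi=\int_0^{x'} D\cdot(\cdots)$ and $\psi=\int_0^{x'} DM\cdot(\cdots)$, with $D$ vanishing to half-integer order at the origin. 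Without the $M$/$D$ decomposition these formulas, and hence the precise expansion, are not accessible from your setup.
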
 

\begin{center}
	
	\begin{tikzpicture}[rotate=0, scale= 1.3]
	\begin{scope}[shift={(8,0)}]
	\coordinate (O) at (0,0);
	\draw [thick, dashed, color=red, name path=n] plot [smooth] coordinates {(-1.5,0.1) (-0.9,-0.06) (-0.3,0.02) (0,0)};
	\draw [thick, dashed, color=blue, name path=n2] plot [smooth] coordinates {(-0.065,0) (-0.3,0.02) (-0.9,-0.06) (-1.485,0.1)};
	\draw [thick, color=blue, name path=r] plot [smooth] coordinates {(0,0) (0.2,-0.04) (0.42,-0.105) (0.6,-0.18) (0.8,-0.29) (1.1,-0.52) (1.3,-0.77) (1.4,-0.96) (1.5,-1.2)};
	\draw [thick, color=red, name path=b] plot [smooth] coordinates {(0,0) (0.14,0) (0.36,0.03) (0.6,0.1) (0.8,0.18) (1,0.3) (1.15,0.42) (1.3,0.6) (1.4,0.78) (1.5,1)};
	\filldraw (0,0) circle (1pt);
	\path[name path=1, intersection segments={of=n and r}];
	\path[name path=2, intersection segments={of=n and b}];
	
	%BLUE
	\begin{scope}[transparency group,opacity=0.2]
	\draw[draw=none, name path=u] (-1.5,1.25) -- (1.5,1.25);
	\draw[draw=none, name path=d] (-1.5,-1.25) -- (1.5,-1.25);
	\tikzfillbetween[of=u and 2] {color=red};
	\tikzfillbetween[of=d and 1] {color=blue};
	\end{scope}
	%  \draw[thick] (O) circle [radius=1.5cm];
	\draw[very thick] (-1.5cm,-1.25cm) rectangle ++(3cm,2.5cm);
	\draw node at (1,-0.01) {$u=0$};
	\draw node at (-0.7,0.17) {$|\nabla u|>1$};
	\node[label={[rotate=35]:$|\nabla u|= 1$}] at (0.95,0.25) {};
	\node[label={[rotate=-35]:$|\nabla u|= 1$}] at (0.55,-0.95) {};
	\draw node at (-0.8,0.9) {$u>0$};
	\draw node at (-0.8,-0.9) {$u<0$};
	\end{scope}
	
	\begin{scope}[shift={(0,0)}]
	\coordinate (O) at (0,0);
	\draw [thick, color=blue, name path=n] plot [smooth] coordinates {(-1.5,-1) (-1.4,-0.78) (-1.3,-0.6) (-1.15,-0.42) (-1,-0.3) (-0.8,-0.18) (-0.6,-0.1) (-0.36,-0.03) (-0.14,0) (0,0)};
	\draw [thick, color=red, name path=n2] plot [smooth] coordinates {(-1.5,1.2) (-1.4,0.96) (-1.3,0.77) (-1.1,0.52) (-0.8,0.29) (-0.6,0.18) (-0.42,0.105) (-0.2,0.04) (0,0)};
	\draw [thick, color=blue, name path=r] plot [smooth] coordinates {(0,0) (0.2,-0.04) (0.42,-0.105) (0.6,-0.18) (0.8,-0.29) (1.1,-0.52) (1.3,-0.77) (1.4,-0.96) (1.5,-1.2)};
	\draw [thick, color=red, name path=b] plot [smooth] coordinates {(0,0) (0.14,0) (0.36,0.03) (0.6,0.1) (0.8,0.18) (1,0.3) (1.15,0.42) (1.3,0.6) (1.4,0.78) (1.5,1)};
	\filldraw (0,0) circle (1pt);
	\path[name path=1, intersection segments={of=n and r}];
	\path[name path=2, intersection segments={of=n2 and b}];
	
	%BLUE
	\begin{scope}[transparency group,opacity=0.2]
	\draw[draw=none, name path=u] (-1.5,1.25) -- (1.5,1.25);
	\draw[draw=none, name path=d] (-1.5,-1.25) -- (1.5,-1.25);
	\tikzfillbetween[of=u and 2] {color=red};
	\tikzfillbetween[of=d and 1] {color=blue};
	\end{scope}
	%  \draw[thick] (O) circle [radius=1.5cm];
	\draw[very thick] (-1.5cm,-1.25cm) rectangle ++(3cm,2.5cm);
	\draw node at (1,-0.01) {$u=0$};
	\draw node at (-1,0.05) {$u=0$};
	\node[label={[rotate=35]:$|\nabla u|= 1$}] at (0.95,0.25) {};
	\node[label={[rotate=35]:$|\nabla u|= 1$}] at (-0.55,-0.95) {};
	\draw node at (-0.6,0.9) {$u>0$};
	\draw node at (0.6,-0.9) {$u<0$};
	\end{scope}
	
	\begin{scope}[shift={(4,0)}]
	\coordinate (O) at (0,0);
	\draw [thick, dashed, color=red, name path=1] plot [smooth] coordinates {(-1.5,0.1) (-0.9,-0.06) (-0.3,0.1) (0,0) (0.4,-0.13) (1,0.2) (1.5,0.1)};
	\draw [thick, dashed, color=blue, name path=2] plot [smooth] coordinates {(-1.42,0.075) (-0.9,-0.06) (-0.3,0.1) (0,0) (0.4,-0.13) (1,0.2) (1.5,0.1)};
	\filldraw (0,0) circle (1pt);
	
	%BLUE
	\begin{scope}[transparency group,opacity=0.2]
	\draw[draw=none, name path=u] (-1.5,1.25) -- (1.5,1.25);
	\draw[draw=none, name path=d] (-1.5,-1.25) -- (1.5,-1.25);
	\tikzfillbetween[of=u and 1] {color=red};
	\tikzfillbetween[of=d and 1] {color=blue};
	\end{scope}
	%  \draw[thick] (O) circle [radius=1.5cm];
	\draw[very thick] (-1.5cm,-1.25cm) rectangle ++(3cm,2.5cm);
	\draw node at (-0.75,0.25) {$|\nabla u|>1$};
%	\node[label={[rotate=30]:$|\nabla u|> 1$}] at (0.85,-0.55) {};
	\node[label={[rotate=0]:$|\nabla u|> 1$}] at (0.85,-0.75) {};
	\node[label={[rotate=0]:$\underbrace{|\nabla u|=1}$}] at  (0.4,0.3) {};
	\draw[very thick,->] (0.35,0.49) -- (0.04,0.06);
	\draw node at (-0.9,0.95) {$u>0$};
	\draw node at (-0.8,-0.9) {$u<0$};
	\end{scope}
	\end{tikzpicture}
	
\end{center}

%{In fact, \ref{item:bbb3} of the previous theorem is always true at isolated branch points.

Notice that \ref{item:aaa} of the previous theorem requires that the function $u$ is symmetric in the generalized sense of \cref{d:acf}, while \ref{item:bbb3} is always true at isolated branch points. In fact, we also have the following result, which simply follows from the fact that if $z_0$ is an isolated point of $\mathcal B_2(u_+,u_-)$, then it is also an isolated point of $\mathcal S_2(u_+,u_-)$ for which \cref{thm:ass4} \ref{item:bb2} does not hold.

\begin{corollary}[Isolated cuspidal points of two-phase problem]
Let $u$ be a solution of the two-phase problem as in \cref{d:acf}. If $z_0\in \mathcal B_2(u_+,u_-)$ is an isolated point of the set $\mathcal B_2(u_+,u_-)$ defined in \eqref{e:definition-of-two-phase-B}, then at least one of the points \ref{item:bbb1} and \ref{item:bbb3} is true at $z_0$. 
\end{corollary}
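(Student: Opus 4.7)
The plan is to reduce the statement directly to \cref{thm:ass4} and eliminate the alternative \ref{item:bbb2}. The first step is to upgrade the hypothesis that $z_0$ is isolated in $\mathcal B_2(u_+,u_-)$ to the stronger statement that $z_0$ is isolated in $\mathcal S_2(u_+,u_-)$, which is what part \ref{item:bbb} of \cref{thm:ass4} requires as input. This is where the symmetry assumption of \cref{d:acf} enters: part \ref{item:aaa} of the theorem guarantees that $\mathcal S_2(u_+,u_-)$ is locally finite, and since $\mathcal B_2(u_+,u_-)\subset\mathcal S_2(u_+,u_-)$, the point $z_0$ is then automatically isolated in $\mathcal S_2(u_+,u_-)$ as well. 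Consequently \cref{thm:ass4}\ref{item:bbb} applies at $z_0$ and one of \ref{item:bbb1}, \ref{item:bbb2}, \ref{item:bbb3} must hold.

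The second step is to rule out \ref{item:bbb2}. In that alternative there exists $r>0$ such that $u$ is harmonic on the entire ball $B_r(z_0)$ and $|\nabla u|>1$ at every point of $\{u=0\}\cap B_r(z_0)\setminus\{z_0\}$. Since $u$ is a nontrivial harmonic function, its nodal set in $B_r(z_0)$ is a smooth curve through $z_0$ along which $u$ changes sign and the two one-sided gradients both equal $|\nabla u|>1$. Hence every point of $\bigl(\partial\Omega_u^+\cup\partial\Omega_u^-\bigr)\cap B_r(z_0)$ other than $z_0$ belongs to the two-phase set $\mathcal C_2(u_+,u_-)$, so $\mathcal O_\pm\cap B_r(z_0)=\emptyset$. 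This directly contradicts $z_0\in\mathcal B_2(u_+,u_-)$, since by definition \eqref{e:definition-of-two-phase-B} every ball $B_r(z_0)$ must meet both $\mathcal O_+$ and $\mathcal O_-$.

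Only \ref{item:bbb1} and \ref{item:bbb3} survive, which is the claim of the corollary. There is no real obstacle at the level of this statement: all the analytic content has already been absorbed into \cref{thm:ass4}, and the only ingredient specific to the corollary is the elementary observation that case \ref{item:bbb2} forces the free boundary near $z_0$ to be purely two-phase, which is incompatible with $z_0$ being a branch point.
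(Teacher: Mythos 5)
Your proposal is correct and follows the same route as the paper, whose proof is just the one-sentence remark that an isolated point of $\mathcal B_2(u_+,u_-)$ is an isolated point of $\mathcal S_2(u_+,u_-)$ at which alternative \ref{item:bbb2} cannot occur; you supply the two details the paper leaves implicit (local finiteness of $\mathcal S_2$ from the symmetry hypothesis via part \ref{item:aaa}, and the incompatibility of \ref{item:bbb2} with $B_r(z_0)\cap\mathcal O_\pm\neq\emptyset$ from \eqref{e:definition-of-two-phase-B}), and both are filled in correctly.
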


\noindent We will prove \cref{thm:ass4} in \cref{ss:2phase}, where we will also discuss the obstructions in applying the conformal hodograph transform to the study of the branch points of the two-phase problem in the absence of symmetries or in the presence of weights $\lambda_\pm$ on the volume of the positivity and the negativity sets. \smallskip

Finally, as in \cref{thm:example-one-phase}, by reversing the argument from the proof of \cref{thm:ass4}, we can construct two-phase cusps with prescribed behavior. 
\begin{theorem}[Cuspidal points for two-phase problem]\label{thm:example-two-phase}
	For any positive integer $n\in \N$, there exists a solution of \eqref{e:equation_2-phase}--\eqref{e:2contact set condition} in dimension $d=2$ such that \eqref{e:main-teo-expansion-f} holds with $k=4n-1$ and \eqref{e:main-teo-precise-expansion-f} with $\Phi(x)=x^m+o(x)$, with $m\ge 2$.  
\end{theorem}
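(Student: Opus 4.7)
The plan is to \emph{reverse} the conformal hodograph construction used in the proof of \cref{thm:ass4}. That construction starts from a symmetric two-phase solution $u$ and produces a holomorphic function $\Phi(z)=az^k+O(z^{k+1})$, $k\ge 3$, together with an auxiliary conformal straightening, from which $u_\pm$ are recovered via explicit formulae analogous to \eqref{e:main-teo-conclusion}. Each step of that construction (conjugate harmonic extension, conformal reparametrisation, branched square root) is analytic and locally invertible, so prescribing admissible model data $\Phi$ and applying the inverse procedure should yield a genuine solution exhibiting the desired cuspidal expansion at the branch point.

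Concretely, I would fix $n\ge 1$, set $k=4n-1$, and take as model the holomorphic function $\Phi(z)=z^k$ on a small disk $B_\rho\subset\C$, together with a conformal straightening $\Psi$ chosen so that, pushed forward through the inverse hodograph, it produces the prescribed leading behaviour $\Phi(x)=x^m+o(x)$ in \eqref{e:main-teo-precise-expansion-f} for any preassigned $m\ge 2$. One then forms the model harmonic function $W(z)=\mathrm{Re}\big((\Phi(\Psi(z)))^{1/2}\big)$ on the appropriate slit disk, selecting the real branches of the square root dictated by symmetry. Applying the inverse of the conformal hodograph transform of \cref{thm:ass4} (defined by integrating an explicit holomorphic differential, combined with an ad hoc analytic extension at the cusp) sends $W$ to a continuous $u\colon B_r(0)\to\R$ with $u_\pm$ harmonic on $\{\pm u>0\}$ and satisfying \eqref{e:equation_2-phase}--\eqref{e:2contact set condition}. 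The expansions \eqref{e:main-teo-expansion-f}--\eqref{e:main-teo-precise-expansion-f} with $k=4n-1$ then follow by direct inspection: the exponent $k/2$ in \eqref{e:main-teo-expansion-f} arises from the $1/2$-power in the square root composed with the $k$th-order vanishing of $\Phi$, while the auxiliary factors $\Phi$, $\beta_\pm$, $\Theta$ in \eqref{e:main-teo-precise-expansion-f} come from the analytic Taylor data of $\Psi$ and the inverse hodograph.

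The main obstacle is the branch-matching at the cusp. The expression $(\Phi\circ\Psi)^{1/2}$ is only single-valued on a slit domain, and for its two real branches to glue into a continuous function compatible with the symmetry of \cref{d:acf} (which requires matching lengths of the one-phase arcs on either side of the contact set), the monodromy of $(\,\cdot\,)^{1/2}$ around the origin must interchange the branches consistently with the exchange $u_+\leftrightarrow -u_-$. A short computation shows that this parity condition forces $k\equiv 3\pmod 4$, i.e.\ $k=4n-1$; the alternative parity $k\equiv 1\pmod 4$ would correspond to non-symmetric branch configurations. Verifying this compatibility rigorously, together with checking that the inverse transform is well-defined and real-analytic across the free boundary up to the branch point (where the hodograph map degenerates), is where the actual work lies.
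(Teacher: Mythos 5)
Your meta-strategy --- prescribe holomorphic model data at the branch point and run the construction backwards, as in \cref{thm:example-one-phase} --- is indeed the paper's intent, but the machinery you invoke is the wrong one, and the construction as you describe it would not produce a two-phase solution. The representation $U=\mathrm{Re}\big(\Phi(\Psi(z))^{1/2}\big)$ of \eqref{e:main-teo-conclusion} belongs to the quasi-conformal/thin-obstacle route of \cref{t:main}; it is not what the conformal hodograph transform of \cref{thm:ass4} produces. That transform turns $(u_+,u_-)$ into a \emph{pair} of harmonic functions $v_\pm$ on opposite half-disks solving the two-membrane problem of \cref{lem:2phlemma} (Bernoulli-type conditions $|\nabla v_\pm|=1$ off the contact set, $\le 1$ on it, and $Q_+=Q_-$ on it), and the holomorphic data to prescribe are $Q_\pm=\partial_{x'}v_\pm-i\partial_{y'}v_\pm$, obtained from $P_\pm=M\pm D$ via the M\"obius map $Q=(1+iP)/(P+i)$ as in \eqref{e:def:Q-two-phase}--\eqref{e:MandD}, not via a branched square root. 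A single function $W=\mathrm{Re}\big((\Phi\circ\Psi)^{1/2}\big)$ does not satisfy $|\nabla W|=1$ on any boundary arc, so it cannot be fed into the inverse hodograph transform, and your proposal never constructs the two functions $v_+,v_-$ that must be glued along $\{y'=0\}$.

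The second, related gap is that you have no mechanism for prescribing the asymmetry, which is the entire content of the statement beyond \cref{thm:example-one-phase}. In the intended reversal the two exponents are governed by two \emph{independent} pieces of data: the antisymmetric part $D(z)=(iz)^{2n-\sfrac32}$ (with $\mathrm{Re}\,D=0$ on $\{x'>0\}$, $\mathrm{Im}\,D=0$ on $\{x'<0\}$) produces the cusp $f_+-f_-\sim|x|^{\sfrac k2}$ and the constraint $k=4n-1$ comes from the sign conditions ($\mathrm{Im}\,P_\pm>0$ on the contact set so that $|Q_\pm|<1$, and $\mathrm{Re}\,P_\pm<0$ on the free arcs so that the inverse map is monotone) --- this, not a monodromy/branch-gluing condition, is the correct form of the parity restriction you guessed at; while the symmetric part $M(z)\sim c\,z^{m-1}$, real on the real axis, produces the mean $\Phi(x')=\tfrac12(\tilde f_+ +\tilde f_-)=x'^m+o(x')$ of \eqref{e:main-teo-precise-expansion-f}. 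You conflate this $\Phi$ with the holomorphic $\Phi$ of \cref{t:main} and try to encode the asymmetry in a quasi-conformal ``straightening $\Psi$'', which has no counterpart here: the hodograph route is purely conformal. The deferred verification that the inverse is well defined amounts to checking $\partial_{y'}v_\pm>0$ and $f_+\ge f_-$ from the explicit integral formulas for $\tilde f_\pm$ and $g_\pm$ in terms of $M$ and $D$, exactly as in the proof of \cref{thm:example-one-phase}; this is routine once the data $(M,D)$ is chosen correctly, but it cannot be started from the data you propose.
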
 
The particular case $\Phi\equiv0$ is an immediate consequence from \cref{thm:example-one-phase} as a solution of the one-phase problem, together with its reflection, gives a solution of the two-phase one. However, the same method provides also non-symmetric examples in which the assymetry is given by the function $\Phi$.

\section{Non-linear thin-obstacle problem}
	
In this section we prove \cref{t:main} using the theory of quasi-conformal map. 

\subsection{Notation and known results}
Let $U\in C^1(B_1^+\cup B_1')$ be a solution of the thin-obstacle problem \eqref{e:equation_u}-\eqref{e:free-boundary-condition0}-\eqref{e:free-boundary-condition1}-\eqref{e:free-boundary-condition2}, where the function 
$\mathcal F:\R^2\to\R$
is $C^2$ regular.\\ We will denote by $\mathcal F_j$, $j=1,2$, and $\mathcal F_{ij}$, $1\le i, j\le 2$, the partial derivatives of $\mathcal F$.
Moreover, we identify $\R^2$ with the field of complex numbers $\mathbb C$, so we will often think of the functions on $\R^2=\mathbb C$ as functions of two real variables $(x,y)\in\R^2$ and at the same time as a function of one complex variable $z=x+iy\in \mathbb C$.
	\subsubsection{Variational inequality formulation}
	The system \eqref{e:equation_u}--\eqref{e:free-boundary-condition2} can be equivalently written in the form of a variational inequality. Precisely, the following are equivalent:
	\begin{enumerate}[\rm(1)]
	\item $U\in C^1(B_1^+\cup B_1')$ and satisfies \eqref{e:equation_u}, \eqref{e:free-boundary-condition0}, \eqref{e:free-boundary-condition1} and \eqref{e:free-boundary-condition2};
	\item $U\in H^1_{loc}(B_1^+\cup B_1')$ (that is $u\in H^1(B_r^+)$ for every $r<1$) and 
	\begin{equation}\label{e:variational_inequality}
	\int_{B_1^+}\nabla \mathcal F(\nabla U)\cdot\nabla (U-v)\,dx\le 0\quad\text{for every}\quad v\in \mathcal K_U,
	\end{equation}
	where $\mathcal K_U$ is the convex set 
	$$\mathcal K_U:=\Big\{v\in H^1_{loc}(B_1^+\cup B_1')\ :\ v\ge 0\,\text{ on }\, B_1'\,,\  v=U\,\text{ in a neighborhood of }\, \partial B_1\Big\}.$$
	\end{enumerate}	
Indeed, the implication $(1)\Rightarrow(2)$ follows simply by an integration by parts, while $(2)\Rightarrow(1)$ was proved by Frehse \cite{F}.
In particular, if $U\in H^1(B_1^+)$ minimizes the integral functional 
\begin{equation}\label{eq:nonlinearthin}
 \mathcal I(v):=\int_{B_1^+}\mathcal F(\nabla v)\,dx\,,
\end{equation}
among all functions in $\mathcal K_U$, then $U$ satisfies the variational inequality \eqref{e:variational_inequality}.

\subsubsection{Higher regularity of the solutions} 
It was proved by Frehse in \cite[Lemma 2.2]{F} that if $U\in H^1(B_1^+)$ is a solution of the variational inequality \eqref{e:variational_inequality}, then $U$ is in $H^2(B_r^+)$ for every $r<1$. Moreover, in \cite[Theorem 4.1]{DFS} it was shown that the solution $U$ is actually in $C^{1,\alpha}(B_1^+\cup B_1')$ for some $\alpha>0$.

\subsection{Local finiteness of the set of branch points}
In this subsection we prove \cref{t:main} \ref{item:a}. We introduce a special function $Q$ that we prove to be quasi-regular in the half-ball, then we obtain \cref{t:main} \ref{item:a} by applying  the Sto\"ilow's factorization theorem for quasi-conformal and quasi-regular maps (see \cite[Chapter 5]{AIM}).  \smallskip

\noindent 	Given a solution $U\colon B_1\cap\{y\geq 0\} \to \R$ of \eqref{e:equation_u}-\eqref{e:free-boundary-condition0}-\eqref{e:free-boundary-condition1}-\eqref{e:free-boundary-condition2}, we consider the function
	\begin{equation}\label{e:def-of-Q}
	Q: B_1^+\cap\{y\geq 0\} \to\mathbb C\,, \qquad Q(x+iy)=\partial_x U-i\mathcal F_2(\nabla U(x,y))
	\end{equation}
	We gather the fundamental properties of this function in the next lemma.

\begin{lemma}\label{p:Q1}
	The function $Q$ defined in \eqref{e:def-of-Q} satisfies the following properties:
	\begin{enumerate}[\rm (1)]
		\item $Q^2\in W^{1,2}({B_r^+};\mathbb C)$, for every $r<1$;
		\item there is $r_0>0$ such that, for every $r<r_0$, $Q$ satisfies the Beltrami equation
		$$
		\partial_{\bar z}Q=\mu\big(\nabla U,\nabla^2U\big)\, \partial_zQ\qquad\text{in}\qquad B_r^+\,,
		$$
		and if for some $\delta\in(0,1]$
		$$\|Id-\nabla^2\mathcal F(\nabla U(z))\|_{2}\le \delta\qquad\text{for every}\qquad z=(x,y)\in B_{r}^+\,,$$
		then 
		$$|\mu(\nabla U(z),\nabla^2U(z))|\le \frac{\delta}{2-\delta}\qquad\text{for every}\qquad z=(x,y)\in B_{r}^+\,,$$
		where for any real matrix $A=(a_{ij})_{ij}$, $\|A\|_2:=\Big(\sum_{i,j}a_{ij}^2\Big)^{\sfrac12}.$
	\end{enumerate}
\end{lemma}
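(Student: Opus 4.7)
For the regularity statement \emph{(1)}, the approach is to combine the two results recalled in the previous subsection. Frehse \cite{F} gives $U \in H^2(B_r^+)$ for every $r<1$, while \cite[Theorem 4.1]{DFS} gives $U \in C^{1,\alpha}(B_1^+\cup B_1')$, so in particular $\nabla U \in H^1(B_r^+) \cap L^\infty(B_r^+)$. Then $\partial_x U \in H^1$, while $\mathcal F_2(\nabla U)$ is the composition of the $C^1$ map $\mathcal F_2$ with an $H^1\cap L^\infty$ function and hence lies in $H^1 \cap L^\infty$ by the standard chain rule. Thus $Q \in W^{1,2}(B_r^+;\C) \cap L^\infty$, and $Q^2 \in W^{1,2}(B_r^+;\C)$ follows immediately.

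For \emph{(2)}, the plan is a direct computation. Writing $\partial_x Q$ and $\partial_y Q$ in terms of $U_{xx}, U_{xy}, U_{yy}$ and $\mathcal F_{ij}(\nabla U)$, and then using \eqref{e:equation_u} in the expanded form $\mathcal F_{11}U_{xx}+2\mathcal F_{12}U_{xy}+\mathcal F_{22}U_{yy}=0$ to eliminate the term $\mathcal F_{22}(\nabla U)U_{yy}$ appearing in $\partial_y\mathcal F_2(\nabla U)$, a short calculation yields the magnitude identities
\[
|2\partial_{\bar z} Q| = |(I-H)v|,\qquad |2\partial_z Q| = |(I+H)v|,
\]
where $H := \nabla^2\mathcal F(\nabla U)$ and $v := (U_{xx},U_{xy})^T$. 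Writing $E := H-I$, the matrix $E$ is symmetric, so $\|E\|_{\mathrm{op}} \le \|E\|_2 \le \delta$ and the smallest singular value of $I+H = 2I+E$ is at least $2-\delta$; hence
\[
|\partial_{\bar z}Q| \le \tfrac{\delta}{2}|v| \quad \text{and}\quad |\partial_z Q| \ge \tfrac{2-\delta}{2}|v|.
\]
Setting $\mu := \partial_{\bar z}Q/\partial_zQ$ on $\{\partial_zQ\neq 0\}$ and $\mu := 0$ otherwise yields the Beltrami equation with $|\mu|\le \delta/(2-\delta)$ a.e. in $B_r^+$. The radius $r_0$ is then any $r_0 < 1$ within the region where the $H^2$-regularity above is valid.

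I do not anticipate a serious obstacle: the whole argument is an algebraic computation together with an invocation of the regularity theorems. The key subtlety is to exploit the symmetry of $H = \nabla^2\mathcal F(\nabla U)$—coming from the variational origin of the problem—which is precisely what makes the smallest singular value of $I+H$ bounded below by $2-\delta$. Without this symmetry one would only get a cruder lower bound on $|\partial_zQ|$ and hence a worse dependence of $|\mu|$ on $\delta$, insufficient to let $\mu\to 0$ as $\nabla U\to 0$ at the optimal rate needed to apply Sto\"ilow's factorization in the subsequent argument.
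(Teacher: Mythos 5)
Your proposal is correct and follows essentially the same route as the paper: part (1) from the $H^2$ and $C^{1,\alpha}$ (hence Lipschitz) regularity of $U$, and part (2) by using the equation to eliminate $\partial_{yy}U$ and arrive at the identities $2|\partial_{\bar z}Q|=|(\mathrm{Id}-H)v|$, $2|\partial_zQ|=|(\mathrm{Id}+H)v|$ with $v=(U_{xx},U_{xy})^T$, from which the bound $|\mu|\le\delta/(2-\delta)$ follows exactly as you say (and, as in the paper, $\partial_zQ=0$ forces $v=0$ and hence $\partial_{\bar z}Q=0$, so setting $\mu=0$ there is consistent). The only small caveat is your closing commentary: the inequality $\|E\|_{\mathrm{op}}\le\|E\|_2$ holds for arbitrary matrices, so the symmetry of $H$ is not really what saves the estimate—what matters is the divergence structure that produces the factor $\mathrm{Id}\pm H$ in the first place.
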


\begin{oss}
Functions satisfying properties (1) and (2) are called quasi-conformal maps.
\end{oss}
%\begin{proof} %We divide the proof in two steps.

\begin{proof}We first prove (1). By \cite{F}, we know that $U\in H^2(B_r^+)$ and that $|\nabla U|\in L^\infty(B_r^+)$. Thus, (1) follows directly by the definition of $Q$. Let us now prove (2).
	
	 For simplicity, we set
	$$A:=\partial_xU\qquad\text{and}\qquad B:=\mathcal F_2(\nabla U).$$
	Thus, $Q=A-iB$ and 
	$$\begin{cases}\partial_{\bar z}Q=\frac12(\partial_x+i\partial_y)(A-iB)=\frac12(\partial_xA+\partial_yB)+\frac{i}{2}(\partial_yA-\partial_xB),\smallskip\\
	\partial_{z}Q=\frac12(\partial_x-i\partial_y)(A-iB)=\frac12(\partial_xA-\partial_yB)-\frac{i}{2}(\partial_yA+\partial_xB),\end{cases}$$
	which implies 
	%If $\de_zQ=0$ then we are done with $\mu=0$, so we can assume $\de_zQ\neq 0$. 
	%We set $\displaystyle \mu=\frac{\partial_{\bar z}Q}{\partial_zQ}$. Then a straightforward computation, using the fact that $\partial_xA+\partial_yB=0$, gives
	%$$|\mu|^2=\left|\frac{\partial_{\bar z}Q}{\partial_zQ}\right|^2=\frac{(\partial_xA+\partial_yB)^2+(\partial_yA-\partial_x B)^2}{(\partial_yA-\partial_x B)^2-4\det(D(A,B))},$$
	\begin{equation}\label{e:definition-mu}
	\begin{cases}
	\begin{array}{lll}
	4\left|\partial_{\bar z}Q\right|^2&=&(\partial_xA+\partial_yB)^2+(\partial_yA-\partial_x B)^2,\smallskip\\
	4\left|\partial_zQ\right|^2&=&(\partial_xA-\partial_yB)^2+(\partial_yA+\partial_x B)^2.
	\end{array}
	\end{cases}
	\end{equation}
We first compute 
	\begin{equation}
	\begin{cases}
	\begin{array}{lll}
	\partial_xA&=&\partial_{xx}U\\
	\partial_yA&=&\partial_{xy}U\\
	\partial_xB&=&\mathcal F_{12}(\nabla U)\partial_{xx}U+\mathcal F_{22}(\nabla U)\partial_{xy}U\\
	\partial_yB&=&\mathcal F_{12}(\nabla U)\partial_{xy}U+\mathcal F_{22}(\nabla U)\partial_{yy}U,
	\end{array}
	\end{cases}
	\end{equation}
	and, using the equation for $U$, we obtain
	\begin{equation}\label{e:estimates-mu-1}
	\begin{cases}
	\begin{array}{lll}
	\partial_xA+\partial_yB&=&\big(1-\mathcal F_{11}(\nabla U)\big)\partial_{xx}U-\mathcal F_{12}(\nabla U)\partial_{xy}U\\
	\partial_yA-\partial_xB&=&-\mathcal F_{12}(\nabla U)\partial_{xx}U+\big(1-\mathcal F_{22}(\nabla U)\big)\partial_{xy}U.
	\end{array}
	\end{cases}
	\end{equation}
	For simplicity, we use the following notation 
	$$m_{ij}:=\delta_{ij}-\mathcal F_{ij}(\nabla U)\quad\text{for every}\qquad 1\le i,j\le 2,$$
	and 
	$$\mathcal M:=\text{\rm Id}-\nabla^2\mathcal F(\nabla U)=\begin{pmatrix}m_{11} & m_{12}\\ m_{21} & m_{22}\end{pmatrix}.$$
	We also set 
	$$\|\mathcal M\|_2^2:=m_{11}^2+2m_{12}^2+m_{22}^2.$$
	Then, by \eqref{e:estimates-mu-1} and the Cauchy-Schwartz inequality, we immediately obtain 
	\begin{equation}\label{e:estimates-mu-2}
	(\partial_xA+\partial_yB)^2+(\partial_yA-\partial_xB)^2\le \|\mathcal M\|_2^2|\nabla A|^2.
	\end{equation}
	In order to estimate $\left|\partial_zQ\right|^2$ in \eqref{e:definition-mu}, we write
	\begin{align*}
	(\partial_xA-\partial_yB)^2+(\partial_yA+\partial_x B)^2&=\Big(2\partial_xA-(\partial_xA+\partial_yB)\Big)^2+\Big(2\partial_yA-(\partial_yA-\partial_x B)\Big)^2\\
	&=4|\nabla A|^2-4\nabla A\cdot \mathcal M(\nabla A)+(\partial_xA+\partial_yB)^2+(\partial_yA-\partial_xB)^2\\
	&=:4|\nabla A|^2+\mathcal R,
	\end{align*}
	where by \eqref{e:estimates-mu-1} and \eqref{e:estimates-mu-2}, we have the estimate
	$$|\mathcal R|\le \Big(4\|\mathcal M\|_2+\|\mathcal M\|_2^2\Big)|\nabla A|^2.$$
	Now, if at some point $\nabla A=0$, then  $\partial_z Q=\partial_{\bar z}Q=0$. Thus, we can define $\mu$ as follows:
$$\mu=0\,,\quad\text{if}\quad \nabla A=0\ ;\qquad
	\mu=\frac{\partial_{\bar z}Q}{\partial_zQ}\,,\quad\text{if}\quad \nabla A\neq0\,.$$
	Since $A$, $\partial_{\bar z}Q$ and ${\partial_zQ}$ are all functions of $\nabla U$ and $\nabla^2U$, also $\mu$ can be written in terms of the same variables, that is: $\mu=\mu(\nabla U,\nabla^2U)$. We notice that with this definition, $\mu$ remains bounded. Indeed,
	$$|\mu|^2=\left|\frac{\partial_{\bar z}Q}{\partial_zQ}\right|^2\le \frac{\|\mathcal M\|_2^2}{4-4\|\mathcal M\|_2+\|\mathcal M\|_2^2}=\left(\frac{\|\mathcal M\|_2}{2-\|\mathcal M\|_2}\right)^2,$$
	so that for $r$ sufficiently small the conclusion follows.
\end{proof}

\begin{proof}[\bf Proof of  \cref{t:main} \ref{item:a}]
Let $Q$ be the function defined in \eqref{e:def-of-Q} and let 
$$
S(z):=\begin{cases}
Q(z)^2& \text{if Im}(z)\geq 0\\
\overline {S}(\overline z) & \text{if Im}(z)\leq 0
\end{cases}
$$
We notice that 
$$
\text{Im}(Q^2(z))=\partial_xU\cdot\mathcal F_2(\nabla U)=0\qquad \text{on}\qquad\{\text{Im}(z)=0\}\,,
$$
so that the function $S$ is in $W^{1,2}(B_{r})$ and satisfies the Beltrami equation 
$$
\partial_{\bar z}S=\psi (z)\, \partial_zS\qquad\text{in}\qquad B_r^+\,,
$$
where 
$$
\psi(z)=\psi(x+iy):=\begin{cases}
\mu\big(\nabla U(x,y),\nabla^2U(x,y)\big)&\  \text{if}\quad\text{\rm Im}(z)\geq 0\,,\\
\overline {\psi}(\overline z) & \ \text{if}\quad \text{\rm Im}(z)\leq 0\,.
\end{cases}
$$
Thus, by \cite[Theorem 5.5.2]{AIM}, we get the claim.\qedhere
\end{proof}

\subsection{Local behavior of the solutions at branch points}
In this subsection we prove \cref{t:main} \ref{item:b}. Given a branch point $z_0\in\mathcal S$, we construct a quasi-regular mapping whose real part is precisely the solution $U$. Without loss of generality, we assume that $z_0=0$ and we choose a radius $r>0$ such that 
\begin{equation}\label{e:conf}
\{U=0\}\cap B_r'=\{x\le0\}\cap B_r'\qquad\text{and}\qquad \{U>0\}\cap B_r'=\{x>0\}\cap B_r'.
\end{equation}
We now notice that the differential form 
$$\alpha=-\mathcal F_2(\nabla U)\,dx+\mathcal F_1(\nabla U)\,dy$$
is closed in $B_r^+$ and so the potential
$$V: B_r^+\cup B_r'\to \R\ ,\qquad V(x,y):=\int_0^1\Big(-\mathcal F_2\big(\nabla U(tx,ty)\big)x+\mathcal F_1\big(\nabla U(tx,ty)\big)y\Big)\,dt$$
is Lipschitz continuous in $B_r^+\cup B_r'$, $C^2$ in $B_r^+$ and satisfies 
$$\begin{cases}
\begin{array}{rcl}
\partial_xV=-\mathcal F_2(\nabla U)&\quad \text{in}\quad &B_r^+,\\
\partial_yV=\mathcal F_1(\nabla U)&\quad \text{in}\quad &B_r^+,\\
UV=0&\quad \text{on}\quad & B_r'\,,
\end{array}
\end{cases}$$
where the last equality follows from \cref{e:conf} and the very definition of \(V\). We next define the complex function
\begin{equation}\label{e:def-of-P}
P: B_r^+\cap\{y\geq 0\} \to\mathbb C\,, \qquad P(x+iy)=U(x,y)+iV(x,y).
\end{equation}
\begin{oss}
Notice that, by the definition of $V$, we have $\partial_xP=Q$ in $B_r^+$. 	
\end{oss}	

We now prove the following lemma. 

\begin{lemma}\label{p:P}
	The function $P$ defined in \eqref{e:def-of-P} satisfied the following properties.
	\begin{enumerate}[\rm (1)]
		\item $P^2\in W^{1,\infty}_{\text{loc}}(B_1^+\cup B_1')$, 
		\item $P$ satisfies the Beltrami equation
		\begin{equation}\label{e:beltrami-eta}
		\partial_{\bar z}P=\eta(\nabla U)\, \partial_zP\qquad\text{in}\qquad B_r^+\,,	
		\end{equation}
where $\eta(\nabla U)=o(|\nabla U|)$. 
%{\color{red}Moreover
%\begin{equation}\label{eq:regofeta}
%{\rm Im}(\eta(\nabla U))\equiv 0 \qquad \text{on }\{{\rm Im}(z)=0\}\,.
%\end{equation}}
\end{enumerate}
\end{lemma}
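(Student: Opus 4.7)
My plan is to show that $P$ itself is Lipschitz continuous up to $B_1'$; the claim on $P^2$ then follows at once by composition. The $C^{1,\alpha}$-regularity of $U$ on $B_1^+\cup B_1'$ recalled above (from \cite{DFS}) makes $\nabla U$ locally bounded, and since $\mathcal F\in C^2$, the identities $\partial_x V = -\mathcal F_2(\nabla U)$ and $\partial_y V = \mathcal F_1(\nabla U)$ coming from the very definition of $V$ give $V\in W^{1,\infty}_{\text{loc}}$ as well.

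\textbf{Plan for Part (2).} My approach is a direct Wirtinger computation followed by Taylor expansion of $\mathcal F$ at the origin. Substituting the identities for $\partial V$ into $\partial_{\bar z}P = \tfrac12(\partial_x + i\partial_y)(U+iV)$ and $\partial_z P = \tfrac12(\partial_x - i\partial_y)(U+iV)$, I would obtain
\[
\partial_{\bar z}P = \tfrac12\bigl[(\partial_x U - \mathcal F_1(\nabla U)) + i(\partial_y U - \mathcal F_2(\nabla U))\bigr],
\]
\[
\partial_z P = \tfrac12\bigl[(\partial_x U + \mathcal F_1(\nabla U)) - i(\partial_y U + \mathcal F_2(\nabla U))\bigr].
\]
From the hypotheses $\nabla\mathcal F(0)=0$ and $\nabla^2\mathcal F(0) = \mathrm{Id}$, integration of $\mathcal F_{ij}-\delta_{ij}$ along the segment from $0$ to $p$ yields $\mathcal F_j(p)-p_j = o(|p|)$ as $p\to 0$. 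Substituting $p=\nabla U$, the first display becomes $o(|\nabla U|)$, while the second equals $\overline{\nabla U} + o(|\nabla U|)$, so $|\partial_z P|\geq \tfrac12|\nabla U|$ on a small enough neighborhood of the branch point.

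\textbf{Conclusion and main subtlety.} I would then define $\eta(\nabla U) := \partial_{\bar z}P/\partial_z P$ at points where $\nabla U\neq 0$ and $\eta(0):=0$, exactly as in the construction of $\mu$ in \cref{p:Q1}. A crucial point is that, in contrast with the Beltrami equation for $Q$ where second derivatives of $U$ enter, both $\partial_{\bar z}P$ and $\partial_z P$ are explicit functions of $\nabla U$ alone, so $\eta$ is genuinely a function of $\nabla U$. The pointwise bounds derived above then deliver the claimed smallness estimate. The only delicate point I foresee is the consistency of the extension at branch points: $\partial_z P$ degenerates there, but so does $\partial_{\bar z}P$ at a commensurate rate thanks to the Taylor expansion of $\mathcal F$, so setting $\eta(0)=0$ is compatible with \eqref{e:beltrami-eta} holding throughout $B_r^+$.
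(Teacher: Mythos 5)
Your proposal is correct and follows essentially the same route as the paper: Lipschitz continuity of $U$ and $V$ for part (1), and the identical Wirtinger computation giving $2\partial_{\bar z}P=(\partial_xU-\mathcal F_1(\nabla U))+i(\partial_yU-\mathcal F_2(\nabla U))$ together with the first-order Taylor expansion $\mathcal F_j(X)-X_j=o(|X|)$ for part (2), with the useful extra observation (implicit in the paper) that $|\partial_zP|\gtrsim|\nabla U|$ so that $\eta:=\partial_{\bar z}P/\partial_zP$ is well defined and bounded near the branch point. Note only that, as in the paper's own argument, these estimates literally yield $|\eta(\nabla U)|=o(1)$ as $\nabla U\to 0$ rather than $o(|\nabla U|)$, which is all that is used in the subsequent blow-up argument.
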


\begin{proof}
The first claim follows from the Lipschitz continuity of $U$ and $V$. In order to prove the second claim, we compute
 $$\begin{cases}2\partial_{\bar z}P=(\partial_x+i\partial_y)(U+iV)
 %=(\partial_xU-\partial_yV)+{i}(\partial_yU+\partial_xV)
 =(\partial_xU-\mathcal F_1(\nabla U))+{i}(\partial_yU-\mathcal F_2(\nabla U)),\smallskip\\
 2\partial_{z}P=(\partial_x-i\partial_y)(U+iV)
 %=(\partial_xU+\partial_yV)-{i}(\partial_yU-\partial_xV)
 =(\partial_xU+\mathcal F_1(\nabla U))-{i}(\partial_yU+\mathcal F_2(\nabla U)),\end{cases}$$
Now, by the differentiability of $\mathcal F_1$ and $\mathcal F_2$ in zero and \eqref{e:derivatives-of-F}, we can write 
%$$\partial_xU-\mathcal F_1(\nabla U)= o(|\nabla U|)\qquad\text{and}\qquad \partial_yU-\mathcal F_2(\nabla U)=o(|\nabla U|).$$ 
$$\mathcal F_1(X)-X_1= \eps_1(X)|X|\qquad\text{and}\qquad \mathcal F_2(X)-X_2= \eps_2(X)|X|,$$
for every $X=(X_1,X_2)\in\R^2$, where the functions $\eps_1$ and $\eps_2$ are such that
$$\lim_{|X|\to0}\eps_1(X)=\lim_{|X|\to0}\eps_2(X)=0\,,$$
from which the first part of the claim follows. 

%{\color{red}
%To prove the other half notice that
%$$
%{\rm Im}(\eta(\nabla U))
%	 = {\rm Im}\frac{\partial_{\bar z} P}{\partial_zP}=\frac1{|\partial_z P|^2}{\rm Im}(\partial_{\bar z} P \, \overline{\partial_zP})
%$$
%so that 
%\begin{align*}
%{\rm Im}(\partial_{\bar z} P \, \overline{\partial_zP})
%	&=(\partial_xU-\mathcal F_1(\nabla U))\, (\partial_yU+\mathcal F_2(\nabla U))+(\partial_yU-\mathcal F_2(\nabla U))\,(\partial_xU+\mathcal F_1(\nabla U))\\
%	&=2\,\left( \partial_xU\,\partial_yU- \mathcal F_1(\nabla U)\,\mathcal F_2(\nabla U)\right) = 0???
%\end{align*}
%NON FUNZIONA perche $\partial_yU$ is not zero...
%}
\end{proof}	

\begin{proof}[\bf Proof of  \cref{t:main} \ref{item:b}]
	Let $P$ be the function defined in \eqref{e:def-of-P} and let 
	$$
	T(z):=\begin{cases}
	P(z)^2& \text{if Im}(z)\geq 0\\
	\overline {T}(\overline z) & \text{if Im}(z)\leq 0
	\end{cases}
	$$
Then
	$$
	\text{Im}(P^2(z))=U(z)V(z)=0\qquad \text{on}\qquad\{\text{Im}(z)=0\}\,,
	$$
	so $T$ is Lipschitz continuous on $B_{r}$, and satisfies the Beltrami equation 
	\begin{equation}\label{e:beltramiT}
	\partial_{\bar z}T=\phi(z)\, \partial_zT\quad\text{in}\quad B_r\,,
	\end{equation}
	where $\phi$ is the extension over the whole $B_r$ of the Beltrami coefficient $\eta(\nabla U)$ from \eqref{e:beltrami-eta} :
	$$
	\phi(z)=\phi(x+iy):=\begin{cases}
	\eta(\nabla U(x,y))&\  \text{if}\quad\text{\rm Im}(z)\geq 0\,,\\
	\overline {\phi}(\overline z) & \ \text{if}\quad \text{\rm Im}(z)\leq 0\,.
	\end{cases}
	$$
	Using again \cite[Theorem 5.5.1 and Corollary 5.5.3]{AIM}, we conclude that there exist an homeomorphism $\Psi \in W^{1,2}(B_r;B_1)$, solution of  \eqref{e:beltramiT} and such that $\Psi(0)=0$ and $\Psi(\rho)=\rho$, for some $\rho<r$,
	%$\partial_{\bar z}\Psi=\phi(z)\, \partial_z\Psi$
	and an holomorphic function $\Phi \colon \Omega \to \C $ such that 
\begin{equation}\label{e:stoilow_for_T}
	T(z)= \Phi(\Psi(z)) \qquad \forall z\in B_{r}\,.
\end{equation}
	
\noindent	Next we prove \eqref{e:0imag}. Observe that if $\Psi$ is a solution to \eqref{e:beltramiT}, then also $\overline \Psi(\overline z)$ is a solution to \eqref{e:beltramiT}, and moreover $\overline \Psi(0)=\Psi(0)=0$ and $\overline \Psi(\rho)=\Psi(\rho)=1$. It follows, by uniqueness of normalized solutions, that $\overline \Psi(z)=\Psi(z)$, which implies \eqref{e:0imag}.\smallskip
	
%	Therefore we can find $\alpha\in \C$ such that $\Psi(1)=\alpha \overline \Psi(1)$, and notice that $\alpha \overline \Psi(\overline z)$ is still a solution of the same equation, which coincide with $\Psi$ in two points, so that $\Psi(z)=\alpha \overline \Psi(\overline z)$. Choosing $\beta$ such that $\beta=\alpha \,\overline \beta$, we conclude that $\beta \Psi(z)=\overline {\beta\Psi(\overline z)}$, so that 
%replacing $\Psi$ with $\beta\Psi$ and $\Phi(z)$ with $\Phi(\bar\beta z)$, we get that
%$${\rm Im} (\Psi)\equiv 0\qquad\text{on}\quad\{{\rm Im}(z)=0\},$$
%while \eqref{e:stoilow_for_T} still holds. 

\noindent	Finally we come to \eqref{e:closetoid}. Suppose by contradiction that  \eqref{e:closetoid} is false. Then, there is a sequence of radii $\rho_k\to 0$ such that the sequence of homeomorphisms $\Psi_k \in W^{1,2}(B_r, B_1)$, solutions of
	$$
	\partial_{\bar z} \Psi_k=\phi(z)\, \partial_z \Psi\ \text{ in }\ B_r\,,\quad \Psi_k(0)=0\,,\quad\Psi_k(\rho_k)=\rho_k\,,
	$$
	doesn't converge uniformly to the function $z$. Consider the sequence of functions $\tilde \Psi_k(z):= \rho_k^{-1}\,\Psi_k(\rho_k\,z)$, then they are solutions of 
	$$
	\partial_{\bar z} \tilde\Psi_k=\phi\left(\rho_k\,z\right)\, \partial_z \tilde\Psi\,\text{ in }B_{r/\rho_k}\,\quad \tilde\Psi_k(0)=0\,,\quad\tilde\Psi_k(1)=1\,.
	$$
	{Reasoning as in the proof of \cref{p:P} and using the fact that $\nabla U(\rho_k z) \to 0$ as $k\to \infty$, since $U\in C^1$ and $\nabla U(0)=0$, we have
	$$
	\lim_{k\to 0}\phi\left(\rho_k\,z\right)=0\qquad \text{a.e. }z\in B_{r/\rho_k}\,.
	$$}
	Using \cite[Lemma 5.3.5]{AIM}, we have that $\tilde \Phi_k$ converges locally uniformly to a homeomorphism $\tilde\Psi:\mathbb C\to\mathbb C$, which is a solution of  
	$$
	\partial_{\bar z} \tilde\Psi=0\, \text{ in }\C,\quad \tilde\Psi(0)=0\,,\quad\tilde\Psi(1)=1\,.
	$$
	But this implies that $\tilde\Psi(z)=z$, which is a contradiction for $k$ sufficiently large.

{In particular notice that, if $  \Phi(z)=z^k+O(z^{k+1})$, then the $C^{1}$ regularity of solutions to the non-linear thin-obstacle problem (see for instance \cite{Fe}) implies that $k\geq 3$.}\qedhere
\end{proof}

\section{\cref{thm:ass2}: proof via quasiconformal maps}
	
	In this section, we will prove Theorem \ref{thm:ass2} as a consequence of Theorem \ref{t:main} combined with an application of the hodograph transform.

	\subsection{The hodograph transform}
	In this section we write the hodograph transformation of a solution $u$ of \eqref{e:first_equation}--\eqref{e:third_equation}. We do this in every dimension $d\ge 2$. 
	
	\subsubsection{Notation}We adopt the following notation. We write every point $x\in\R^d$ in coordinates as $x=(x',x_d)\in \R^{d-1}\times\R$. For every $\rho>0$, we denote by $B_\rho$ and $B_\rho'$ the balls centered in zero of radius $\rho$ in $\R^d$ and $\R^{d-1}$, respectively. We will identify $\R^{d-1}$ with the hyperplane $\R^{d-1}\times\{0\}\subset\R^d$, thus
	$$B_\rho'=B_\rho\cap\{x_d=0\}\qquad \text{and}\qquad B_\rho^+=B_\rho\cap\{x_d>0\}.$$ 
	We denote by $\nabla_{x'}$ the gradient with respect to the first $d-1$ coordinates $x'=(x_1,\dots,x_{d-1})$. Thus, for every function $u:\R^d\to\R$, we can write the full gradient $\nabla u$ as
	$$\nabla u=(\nabla_{x'}u,\partial_du)\quad\text{and}\quad |\nabla u|^2=|\nabla_{x'}u|^2+|\partial_du|^2.$$
Let us assume that $0\in \mathcal S_1(u)$, that is $0$ is a branch point, and let $f\in C^{1,\alpha}$ be the function that locally describes the free-boundary $\partial \Omega_u$ as in \eqref{eq:fbgraph}, so that
	\begin{equation*}
	f(0)=0\quad\text{and}\quad \nabla_{x'}f(0)=0.
	\end{equation*}
	Now since $u(x',f(x'))$ vanishes for evey $x'\in B_\rho'$, we have that $\nabla_{x'}u(0)=0$. Thus
	$$\nabla u(0)=\partial_d u(0)\,e_d\qquad\text{and}\qquad\partial_du_+(0)\ge 1\ .$$
	
	\subsubsection{The hodograph transform}Let $0\in \partial\Omega_u\cap \{x_d=0\}$ and $f:B_\rho'\to[0,+\infty)$ be as above. We consider the change of coordinates 
	$$y'=x'\,,\quad y_d=u(x',x_d).$$
	Since $u\in C^{1,\alpha}(\overline \Omega_u\cap B_1)$, and since $\partial_ d u(0)\ge 1>0$, we have that the function 
	$$T:B_\rho\cap\overline\Omega_u\to\R^d\cap\{x_d\ge 0\}\ ,\qquad T(x',x_d)=(y',y_d),$$
	is invertible for $\rho$ small enough. In particular, the set $\ T\big(B_\rho\cap\overline\Omega_u\big)\ $ is an open neighborhood of $0$ in the upper half-space $\R_d\cap\{x_d\ge 0\}$. Let 
	$$S:T\big(B_\rho\cap\overline\Omega_u\big)\to B_\rho\cap\overline\Omega_u\  ,\qquad S(y',y_d)=(x',x_d),$$
	be the inverse of $T$. Since the map $T$ does not change the first $d-1$ coordinates, there is a $C^{1,\alpha}$ regular function $v$, defined on the set $\,T\big(B_\rho\cap\overline\Omega_u\big)\,$, such that 
	$$S(y',y_d)=\big(y',v(y',y_d)\big).$$
	We will write this in coordinates as 
	$$x'=y'\ ,\quad x_d=v(y',y_d).$$
	
	\begin{oss}
		The function $v$ contains all the information of the free boundary $\partial\Omega_u$. Precisely, for every $x'$ in a neighborhood of $0\in\R^{d-1}$, we have 
		\begin{equation}\label{e:v+f}
		v(x',0)=f(x').
		\end{equation}
		Indeed, it is immediate to check that for any point $(x',x_d)$ in a neighborhood of zero,
		$$x_d=f(x')\ \Leftrightarrow\ (x',x_d)\in\partial\Omega_u\ \Leftrightarrow\ x_d=v(x',u(x',x_d))=v(x',0).$$
		As a consequence of \eqref{e:v+f}, we get that 
		\begin{equation}\label{e:order}
		v(x',0)\ge 0\quad\text{for every $x'$ in a neighborhood of zero in $\R^{d-1}$}.
		\end{equation}
	\end{oss}

	\begin{lemma}[Hodograph transform]\label{lem:hodo}
		Let $u$, $T$, $B_\rho$ and $v$ be as above. Then, there is $r>0$ such that  
		$$B_r\cap \{x_d\ge 0\}\subset T\big(B_\rho\cap\overline\Omega_u\big),$$
		and such that the function
		$$v:B_r\cap\{x_d\ge 0\}\to\R,$$
		exists, is $C^{1,\alpha}$ in $B_r\cap\{x_d\ge 0\}$ and $C^{\infty}$ in $B_r\cap\{x_d> 0\}$. 
		Moreover, the function
		$$w:B_r\cap\{x_d\ge 0\}\to\R\ ,\qquad w(x',x_d)=v(x',x_d)-x_d$$
		solves the nonlinear thin-obstacle problem 
			\begin{align}
		{\rm div}(\nabla \mathcal F(\nabla w)) =0&\quad\text{in}\quad  B_r^+\,,\label{e:first_equation}\\
		w\ge 0&\quad\text{on}\quad B_r'\,,\label{e:order:B_r'}\\
		\mathcal F_d(\nabla w)=0&\quad\text{on}\quad \{w>0\}\cap B_r'\,,\label{e:second_equation}\\
		\mathcal F_d(\nabla w)\le 0&\quad\text{on}\quad \{w=0\}\cap B_r'\,,\label{e:third_equation}
		\end{align}
		for the nonlinearity 
		$\displaystyle\mathcal F(x',x_d):=\frac{|x'|^2+x_d^2}{1+x_d}.$	
	\end{lemma}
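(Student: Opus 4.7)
The plan is to argue in two stages: (i) produce $v$ on $B_r\cap\{x_d\ge 0\}$ with the claimed regularity via the inverse function theorem, and (ii) translate $\Delta u=0$ and the free-boundary conditions for $u$ into the thin-obstacle problem for $w$ by the chain rule.

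For stage (i), since $\partial_{x_d}u(0)\ge 1>0$ and $u\in C^{1,\alpha}(\overline{\Omega_u}\cap B_1)$ by \cite{chang-lara-savin}, one has $\partial_{x_d}u\ge 1/2$ in a neighborhood $\mathcal U$ of $0$ in $\overline{\Omega_u}$. Since $\det DT=\partial_{x_d}u$, $T|_{\mathcal U}$ is a $C^{1,\alpha}$ diffeomorphism onto its image; because the graph $\{x_d=f(x')\}$ is sent into $\{y_d=0\}$ and $\Omega_u$ into $\{y_d>0\}$, this image contains a relative neighborhood of $0$ in $\{y_d\ge 0\}$. Choosing $r>0$ so that $B_r\cap\{y_d\ge 0\}$ lies inside the image yields $v\in C^{1,\alpha}(B_r\cap\{x_d\ge 0\})$; the $C^\infty$ interior regularity follows because $u$ is real-analytic inside $\Omega_u$ and $T$ is smooth there. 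Also $w(y',0)=v(y',0)=f(y')\ge 0$, which gives \eqref{e:order:B_r'}.

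For stage (ii), differentiating the identity $u(y',v(y',y_d))=y_d$ yields the hodograph relations
\begin{equation*}
\partial_{x_d}u=\frac{1}{1+\partial_{y_d}w},\qquad \partial_{x_i}u=-\frac{\partial_{y_i}w}{1+\partial_{y_d}w}\quad(i<d),
\end{equation*}
and in particular $|\nabla u|^2=(1+|\nabla_{y'}w|^2)/(1+\partial_{y_d}w)^2$. Setting $p=\nabla u$ and computing directly for $\mathcal F(q)=(|q'|^2+q_d^2)/(1+q_d)$ gives the pointwise identity
\begin{equation*}
\nabla \mathcal F(\nabla w)=\bigl(-2p_1,\dots,-2p_{d-1},\,1-|p|^2\bigr).
\end{equation*}
In particular $\mathcal F_d(\nabla w)=1-|\nabla u|^2$, which yields \eqref{e:second_equation}--\eqref{e:third_equation}, because $T$ sends $\partial\Omega_u\cap\{x_d>0\}$ bijectively onto $\{w>0\}\cap B_r'$ (where $|\nabla u|=1$) and $\partial\Omega_u\cap\{x_d=0\}$ onto $\{w=0\}\cap B_r'$ (where $|\nabla u|\ge 1$). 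For the bulk equation \eqref{e:first_equation}, applying the chain rule $\partial_{x_j}=\partial_{y_j}+p_j\partial_{y_d}$ for $j<d$ and $\partial_{x_d}=p_d\partial_{y_d}$ to the components $p_i$ yields
\begin{equation*}
\Delta u=\sum_{j=1}^d\partial_{x_j}p_j=\sum_{j<d}\partial_{y_j}p_j+\tfrac{1}{2}\partial_{y_d}|p|^2=-\tfrac{1}{2}\,\mathrm{div}_y\bigl(\nabla \mathcal F(\nabla w)\bigr),
\end{equation*}
so $\Delta u=0$ in the interior of $\Omega_u$ becomes $\mathrm{div}(\nabla \mathcal F(\nabla w))=0$ in $B_r^+$.

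The main obstacle is mainly bookkeeping: tracking signs and indices through the chain-rule computation, and verifying that the form $\mathcal F(q)=(|q'|^2+q_d^2)/(1+q_d)$ from the statement really is the unique one that makes the hodograph-transformed Laplace equation divergence-structured. Once the pointwise identity $\nabla\mathcal F(\nabla w)=(-2p',\,1-|p|^2)$ is in hand, both the bulk equation and the boundary conditions follow transparently from it and from the mapping properties of $T$; the remainder is standard hodograph-transform technology.
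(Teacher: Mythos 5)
Your proposal is correct, and it diverges from the paper's proof in the one place where a real choice exists: the derivation of the interior equation \eqref{e:first_equation}. The paper obtains it variationally, by changing variables in the Dirichlet energy to show $\int_{B_\rho\cap\Omega_u}|\nabla u|^2\,dx=\int\big(\mathcal F(\nabla w)+2\big)\,dy$ and then reading off the Euler--Lagrange equation for $w$ from interior perturbations $w+\eps\varphi$; you instead verify the pointwise identity $\nabla\mathcal F(\nabla w)=\big(-2\nabla_{x'}u,\,1-|\nabla u|^2\big)$ and push the Laplacian through the chain rule to get $\Delta u=-\tfrac12\,\mathrm{div}_y\big(\nabla\mathcal F(\nabla w)\big)$, which I have checked and which is exact. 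Your route is more elementary (it needs only that $u$ is smooth and harmonic in $\Omega_u$, not the minimality of harmonic functions nor the correspondence between perturbations of $w$ and of $u$, which the paper leaves implicit), and your single identity for $\nabla\mathcal F(\nabla w)$ also delivers the boundary conditions \eqref{e:second_equation}--\eqref{e:third_equation} immediately as $\mathcal F_d(\nabla w)=1-|\nabla u|^2$ combined with $|\nabla u|=1$ on $\{f>0\}$ and $|\nabla u|\ge 1$ on the contact set; the paper reaches the same conclusion through the equivalent reformulation $(\partial_d v)^2=1+|\nabla_{x'}f|^2$. What the paper's variational derivation buys in exchange is conceptual: it exhibits $w$ directly as a minimizer of $\int\mathcal F(\nabla w)$ over the convex set $\mathcal K_w$, which is the form in which the regularity results of Frehse and Di Fazio--Spadaro (quoted earlier in Section 2) are applied; with your purely pointwise derivation one should add the one-line remark that the classical equations \eqref{e:first_equation}--\eqref{e:third_equation} together with $w\in C^1$ are equivalent to the variational inequality \eqref{e:variational_inequality}, as recorded in Section 2.1.1. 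Your stage (i) matches the paper's.
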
	
\begin{oss}\label{rem:nonlin2}
We notice that \eqref{e:v+f} implies that the contact sets of the solution of the one-phase problem $u$ and the solution of the nonlinear thin-obstacle problem $w$  are mapped one into the other:
$$\mathcal C_1(u)=\partial\Omega_u\cap B_r'= S(\{w=0\}\cap B_r')
$$
%=\mathcal C(w),$$
%and also that the set of branch points of the one-phase problem and the set of branch points of the nonlinear thin-obstacle problem \eqref{e:first_equation}-\eqref{e:order:B_r'}-\eqref{e:second_equation}-\eqref{e:third_equation} coincide, that is
as well as the singular sets defined in \eqref{e:def-S-thin} and \eqref{e:def-S-one-phase}
$$\mathcal S_1(u)=B_r'\cap\{u=0\}\cap\{|\nabla u|=1\}=S(B_r'\cap\{w=0\}\cap\{|\nabla w|=0\}).$$
\end{oss}	
	\begin{proof}[Proof of \cref{lem:hodo}]
		We first notice that 
		$$w(x',0)=v(x',0)=f(x')\quad\text{for every}\quad x'\in B_r'.$$
		This proves \eqref{e:order:B_r'} and the first part of \eqref{e:third_equation}. Next, we notice that since 
		$$v\big(x',u(x',x_d)\big)=x_d\quad\text{for every}\quad (x',x_d)\in B_\rho\cap \overline\Omega_u,$$
		we have that
		\begin{equation}\label{e:partial_i}
		\partial_{i} v_+\big(x', u_+(x',x_d)\big)+\partial_{d}v_+\big(x', u(x',x_d)\big)\partial_{i} u_+(x',x_d)=0\quad\text{for}\quad i=1,\dots,d-1,
		\end{equation}
		and 
		\begin{equation}\label{e:partial_d}
		\partial_d v\big(x', u(x',x_d)\big)\partial_{d} u(x',x_d)\equiv 1.
		\end{equation}
		Thus, we can compute
		\begin{equation}\label{e:partial_dd}
		\Big(1+\partial_d w\big(x', 0\big)\Big)\partial_{d} u(x',f(x'))\equiv 1,
		\end{equation}
		and since $\partial_du(x',0)\ge 1$, we obtain also the second part of \eqref{e:third_equation}.
		
		Next, in order to prove that the boundary condition \eqref{e:second_equation} holds, we notice that it is equivalent to 
		$$\big(\partial_dv(x',0)\big)^2=1+|\nabla_{x'}f(x')|^2\quad\text{for}\quad x'\in B_r'\cap\{f>0\},$$
		and, in view of \eqref{e:partial_dd}, also to 
		$$\big(\partial_{d} u(x',f(x'))\big)^2\Big(1+|\nabla_{x'}f(x')|^2\Big)=1\quad\text{for}\quad x'\in B_r'\cap\{f>0\},$$
		which is a consequence of the identity 
		$$\partial_iu(x',f(x'))+\partial_du(x',f(x'))\partial_if(x')\equiv 0\quad\text{for every}\quad i=1,\dots,d-1,$$
		and the boundary condition
		$$(-\nabla_{x'} f(x'),1)\cdot\nabla u(x',f(x'))=-\big(|\nabla_{x'}f(x')|^2+1\big)^{\sfrac12}\quad\text{on}\quad \{f>0\}.$$
			In order to prove  \eqref{e:first_equation} we notice that, in $\Omega_u$, $u$ is a local minimizer of the Dirichlet integral 
			$$\displaystyle \ J(u)=\int |\nabla u|^2\,dx\,, $$ 
		which can be expressed in terms of $w$ by applying \eqref{e:partial_i} and \eqref{e:partial_d}:
		$$|\nabla u|^{2} (x',x_d)=\frac{|\nabla_{x'}  v|^2\big(x', u(x',x_d)\big)+1}{|\partial_d v|^2\big(x', u(x',x_d)\big)}\qquad\text{and}\qquad \det (\nabla T)(x',x_d)=\partial_{d}u(x',x_d).$$
		Now, by the change of coordinates $y'=x'$, $y_d=u(x',x_d)$, we get
		$$\int_{B_\rho\cap\overline\Omega_u} |\nabla u|^2\,dx
		=\int \frac{|\nabla_{y'} v|^2(y',y_d)+1}{|\partial_d v|^2(y',y_d)}\frac{1}{|\partial_d u(x',x_d)|}\,dy=\int \frac{|\nabla_{y'} v|^2(y',y_d)+1}{\partial_d v(y',y_d)}\,dy$$
		where all the integrals in $dy$ are over $T(B_\rho\cap\overline\Omega_u)$. Now, by the definition of $w$, we get
		$$\int_{B_\rho\cap\Omega_u} |\nabla u|^2\,dx
		=\int_{T(B_\rho\cap\overline\Omega_u)} \left(\frac{|\nabla w|^2(y', y_d)}{1+\partial_d w(y', y_d)}+2\right)\,dy.$$
		Thus, $w$ minimimizes the functional
		$$J(w)=\int \frac{|\nabla w|^2(y', y_d)}{1+\partial_d w(y', y_d)}\,dy$$
		in the open set $T(B_\rho\cap\Omega_u)$
		with respect to perturbations of the form $w+\eps\varphi$ for small $\eps$ and smooth $\varphi$. This concludes the proof of \eqref{e:first_equation}.
	\end{proof}	
	
\begin{proof}[Proof of \cref{thm:ass2}] \cref{thm:ass2} follows by combining  \cref{lem:hodo} with \cref{t:main}.
	
\end{proof}
%\subsection{Proof of \cref{thm:ass2}} Item \ref{item:aa} of \cref{thm:ass2} now follows combining \cref{lem:hodo} and \cref{rem:nonlin}-\ref{rem:nonlin2}, together with \cref{t:main} \ref{item:a}.	
%
%{\color{red}
%To prove \cref{thm:ass2} \ref{item:bb}, we observe that by \cref{t:main} \ref{item:b} we have that 
%$$
%w(z)={\rm Re}(\Phi(\Psi(z))^{\sfrac12}) \qquad \forall z\in B_{\rho}\,,
%$$
%with $\Psi \in W^{1,2}(B_\rho, B_\rho(z_0))$ an homeomorphism and $\Phi\colon B_\rho \to \C$ an holomorphic function such that, if $\Phi(z)=z^k+O(z^{k+1})$, then $k\geq 3$. Moreover ${\rm Im}(\Psi(z))\equiv 0$ on ${\rm Im}(z)\equiv 0$ and $\Psi(z)$ is analytic so that 
%$$
%f(x)=w(x,0)={\rm Re}(\Phi(\Psi(x))^{\sfrac12})=(x \wedge 0)^{\sfrac k2}\, \phi(x) \qquad \forall x\in B'_{\rho}\,.
%$$
%for an analytic function $\phi \colon (-\rho,\rho) \to \R$. The desired result now follows from \eqref{e:v+f}, which implies that $w(x',0)=v(x',0)=f(x')$.
%}
%\qed

\section{\cref{thm:ass2,thm:example-one-phase}: proof via conformal hodograph transform}

In this section we prove \cref{thm:ass2} by introducing a new, conformal version, of the hodograph transform, which not only provides another proof of the fact that the one-phase branch points are isolated, but also provides the full expansion of the solution, and a way to construct examples of solutions with prescribed vanishing order (see \cref{thm:example-one-phase}).

\subsection{The harmonic conjugate}\label{ss:har} Let $u$ be a solution of the one-phase problem \eqref{e:equation_one-phase}--\eqref{e:contact set condition}, let $\mathcal S_1(u)$ be the singular set defined in \eqref{e:def-S-one-phase} and let $0\in \mathcal S_1(u)$. Let $\mathcal I_\rho=(-\rho,\rho)$ and let $f:\mathcal I_\rho\to\R$ be the $C^{1,\alpha}$ function from \eqref{eq:fbgraph} that describes locally the free boundary $\partial \Omega_u\cap B_{\rho}$; we recall that $f$ is non-negative and
$f(0)=f'(0)=0.$
	Now, since the function 
	$$\mathcal I_\rho\ni x\mapsto u(x,f(x)),$$ 
	vanishes for every $x\in \mathcal I_\rho$, we have that $\partial_{x}u(0,0)=0$. Thus
	$$\nabla u(0,0)=\partial_y u(0,0)\,e_2\qquad\text{and}\qquad\partial_y u(0,0)\ge 1\ ,$$
where $e_2=(0,1)$. We next define the open set 
$$\Omega_\rho=\Big\{(x,y)\in\mathcal I_\rho\times \mathcal I_\rho\ :\ f(x)>y\Big\},$$
and the boundary 
$$\Gamma_\rho:=\Big\{(x,y)\in\mathcal I_\rho\times \mathcal I_\rho\ :\ f(x)=y\Big\}.$$
Since $\Omega_\rho$ is simply connected, and $u$ is harmonic in $\Omega_\rho$, there is a function 
$$U: \Omega_\rho\cup \Gamma_\rho\to\R$$
which solves the problem
$$U(0,0)=0,\quad \partial_xU=\partial_yu\quad\text{and}\quad \partial_yU=-\partial_xu\quad\text{in}\quad \Omega_\rho.$$
We recall that, for any $(x,y)\in \Omega_\rho\cup \Gamma_\rho$, $U(x,y)$ is the line integral $\,\displaystyle\int_\sigma \alpha\,$ of the $1$-form 
$$\alpha:=\partial_y u(x,y)\,dx-\partial_xu(x,y)\,dy$$
over any curve
$$\sigma:[0,1]\to\Omega_\rho\cup \Gamma_\rho$$
connecting the origin $(0,0)$ to $(x,y)$. In particular, $U$ is as regular as $u$: 
$$U\in C^{1,\alpha}(\Omega_\rho\cup \Gamma_\rho).$$
If we choose $\sigma$ to be the curve parametrizing the free boundary $\Gamma_\rho$, 
$$\sigma:[0,x]\to\R^2,\quad \sigma(t)=(t,f(t)),$$ 
then, by integrating $\alpha$ over $\sigma$ and using that 
$$\partial_x u(t,f(t))+f'(t)\partial_y u(t,f(t))=0\qquad\text{for every}\qquad t\in\mathcal I_\rho\ ,$$ 
we obtain the formula 
\begin{align*}
U(x,f(x)):&=\int_0^x\Big(\partial_yu(t,f(t))-\partial_xu(t,f(t))f'(t)\Big)\,dt\\
&=\int_0^x|\nabla u|(t,f(t))\sqrt{1+f'(t)^2}\,dt=\int_\sigma|\nabla u|.
\end{align*}
In what follows, we will use the notation 
$$\eta(x):=U(x,f(x))=\int_\sigma|\nabla u|.$$

	\subsection{The conformal hodograph transform}\label{ss:hodotrans} With the notation from \cref{ss:har}, we consider the change of coordinates 
	$$x'=U(x,y)\,,\quad y'=u(x,y)\,,$$
	given by the $C^{1,\alpha}$-regular map 
	$$T:\Omega_\rho\cup\Gamma_\rho\to\R^2\cap\{y'\ge 0\}\,,\qquad T(x,y)=(x',y')\,.$$
	Now, by the definition of $U$ and the fact that  $\partial_ y u(0,0)\ge 1$, we have that the map  $T$ is invertible for $\rho$ small enough. In particular, the set $\ T\big(\Omega_\rho\cup\Gamma_\rho\big)\ $ is an open neighborhood of $(0,0)$ in the upper half-plane $\R^2\cap\{y'\ge 0\}$. Let 
	$$S:T\big(\Omega_\rho\cup\Gamma_\rho\big)\to \Omega_\rho\cup\Gamma_\rho\  ,\qquad S(x',y')=(x,y)\,,$$
	be the inverse of $T$. We can write $S$ as
	$$S(x',y')=\big(V(x',y'),v(x',y')\big),$$
which in coordinates reads as
	$$x=V(x',y')\,,\quad y=v(x',y')\,.$$
As in the case of the classical hodograph transform, the function $v$ contains all the information of the free boundary $\Gamma_\rho$. Precisely, for every $x\in \mathcal I_\rho$, we have 
$$y=f(x)\ \Leftrightarrow\ (x,y)\in\Gamma_\rho\ \Leftrightarrow\ y=v\big(U(x,y),u(x,y)\big)=v(x',0).$$
		As a consequence, we obtain the equation 
		$$f(x)=v(\eta(x),0)\quad\text{for every}\quad x\in\mathcal I_\rho\,.$$
		In particular, for $x'\in\R$ in a neighborhood of zero, $v(x',0)\ge 0$ and 
		\begin{equation}\label{e:order}
		v(x',0)> 0\quad\Leftrightarrow\quad f(\eta^{-1}(x'))>0.
		\end{equation}
\begin{oss}\label{oss:contact-set}
We notice that, in terms of the contact sets 
$$\mathcal C_1(u)=\{y=0\}\,\cap\,\partial\Omega_u\qquad\text{and}\qquad \mathcal C(v)=\{y'=0\}\,\cap\,\{v(x',0)=0\},$$
the map $\eta$ is locally a $C^1$ diffeomorphism, which is sending $\mathcal C_1(u)$ into $\mathcal C(v)$.
\end{oss}

	\begin{lemma}[Equations for $v$]\label{lem:con1ph}
		Let $T=(U,u)$ and $S=(V,v)$ be as above.\\ Then, there is $r>0$ such that  
		$$B_r\cap \{y'\ge 0\}\subset T\big(\Omega_\rho\cup\Gamma_\rho\big),$$
		and such that the function
		$$v:B_r\cap\{y'\ge 0\}\to\R,$$
		is  $C^{1,\alpha}$-regular in $B_r\cap\{y'\ge 0\}$ and  $C^{\infty}$ in $B_r\cap\{y'> 0\}$.\\ 
		Moreover, if we denote by $\mathcal C_v$ the contact set 
		\begin{equation}
		\mathcal C_v:=\Big\{(x',0)\,:\,x'=\eta(x),\ x\in \mathcal I_\rho,\ f(x)=0\Big\},\label{e:contact-set-v}
		\end{equation}
		then  $v$ solves the problem 
\begin{align}
\Delta v=0&\quad\text{in}\quad B_r\cap\{y'>0\},\label{e:system00-w:equation-w}\\
v\ge 0&\quad\text{on}\quad B_r\cap\{y'=0\},\label{e:system00-w:sign-w}\\
|\nabla v|= 1&\quad\text{on}\quad B_r\cap\{y'=0\}\setminus\mathcal C_v,\label{e:system00-w:boundary-condition-w>0}\\
v=0\quad\text{and}\quad |\nabla v|\le 1&\quad\text{on}\quad B_r\cap\{y'=0\}\cap\mathcal C_v.\label{e:system00-w:boundary-condition-w=0}
\end{align}
Moreover, for every $x\in \Gamma_\rho$, we have the identities
\begin{equation}\label{eq:etauu}
f'(x)= \frac{\partial_{x'}v(\eta(x),0)}{\partial_{y'}v(\eta(x),0)}
\qquad \text{and}\qquad 
\eta'(x)=\frac{1}{\partial_{y'}v(\eta(x),0)}.
\end{equation}
	\end{lemma}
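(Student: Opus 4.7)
The central observation I would use is that $T=(U,u)$ is nothing but the Cartesian representation of the holomorphic map $w(z):=U(z)+iu(z)$, since $U$ was constructed as the harmonic conjugate of $u$ in the simply connected domain $\Omega_\rho$, i.e.\ $\partial_xU=\partial_yu$ and $\partial_yU=-\partial_xu$. Hence $|w'(z)|^2=|\nabla u(z)|^2$, which at $z=0$ equals $1$ since $0\in\mathcal{S}_1(u)$. Combining this with the $C^{1,\alpha}$-regularity of $u$ up to the free boundary $\Gamma_\rho$ (a consequence of \cite{chang-lara-savin}), the map $w$ extends to a $C^{1,\alpha}$ map on $\Omega_\rho\cup\Gamma_\rho$ with $|w'|\ge \tfrac12$ on a closed neighborhood of the origin. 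I would then invoke the classical inverse function theorem, with $C^{1,\alpha}$ regularity up to the flat boundary, to produce $r>0$ and an inverse $w^{-1}\colon B_r\to w^{-1}(B_r)$ which is holomorphic on $B_r\cap\{y'>0\}$ and $C^{1,\alpha}$ up to $\{y'=0\}$. Writing $w^{-1}(x'+iy')=V(x',y')+iv(x',y')$, the function $v$ is automatically harmonic in $B_r\cap\{y'>0\}$ as the imaginary part of a holomorphic function, which gives \eqref{e:system00-w:equation-w} together with the claimed $C^{1,\alpha}/C^\infty$ regularity.

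For the boundary behavior, I would rely on the fact that $u=0$ on $\Gamma_\rho$, so $T(x,f(x))=(\eta(x),0)$ and, for $r$ small, every $(x',0)\in B_r$ corresponds to a unique $x=\eta^{-1}(x')$ with $v(x',0)=f(x)\ge 0$. This proves \eqref{e:system00-w:sign-w} and gives $v=0$ on $\mathcal{C}_v$ (which is by construction the image of $\{f=0\}$), settling the $v$-part of \eqref{e:system00-w:boundary-condition-w=0}. For the gradient conditions, the conformality of $w^{-1}$ yields the pointwise identity
\[
|\nabla v|(x',y')=|(w^{-1})'(x'+iy')|=\frac{1}{|w'(z)|}=\frac{1}{|\nabla u|(x,y)}, \qquad w(z)=x'+iy'.
\]
On $\{y'=0\}\setminus\mathcal{C}_v$ the preimage is a proper free-boundary point where $|\nabla u|=1$ by \eqref{e:free-boundary-condition}, giving \eqref{e:system00-w:boundary-condition-w>0}; on $\mathcal{C}_v$ the preimage is a contact point where $|\nabla u|\ge 1$ by \eqref{e:contact set condition}, yielding $|\nabla v|\le 1$ and completing \eqref{e:system00-w:boundary-condition-w=0}.

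Finally, the two identities in \eqref{eq:etauu} would follow by differentiating the relation $f(x)=v(\eta(x),0)$ and combining with the explicit expression $\eta'(x)=|\nabla u|(x,f(x))\sqrt{1+f'(x)^2}$ obtained in \cref{ss:har}. Using the Cauchy--Riemann equations I would compute
\[
\partial_{y'}v(\eta(x),0)=\frac{\partial_xU(x,f(x))}{|\nabla u(x,f(x))|^2}=\frac{\partial_yu(x,f(x))}{|\nabla u(x,f(x))|^2},
\]
and I would then observe that on the free boundary the gradient $\nabla u$ is parallel to the inward unit normal $(-f'(x),1)/\sqrt{1+f'(x)^2}$, so $\partial_yu(x,f(x))=|\nabla u(x,f(x))|/\sqrt{1+f'(x)^2}$. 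Substituting gives $\eta'(x)\,\partial_{y'}v(\eta(x),0)=1$, which is the second identity, and dividing $f'(x)=\partial_{x'}v(\eta(x),0)\,\eta'(x)$ by it yields the first.

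The only step I expect to require genuine care is producing the $C^{1,\alpha}$ inverse of $w$ up to the flat boundary $\{y'=0\}$; the rest is essentially bookkeeping once the holomorphic framework is in place. This boundary regularity ultimately rests on the fact that $|\nabla u|(0)=1>0$, combined with the $C^{1,\alpha}$-regularity of both $u$ and $f$, and can be handled by a standard boundary inverse function theorem argument after a local $C^{1,\alpha}$ straightening of $\Gamma_\rho$.
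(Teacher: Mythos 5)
Your proposal is correct and follows essentially the same route as the paper: the paper also obtains harmonicity of $v$ from the conformality of $S=(V,v)$ and derives the key identity $|\nabla u|\,|\nabla v|=1$ (there by implicitly differentiating $v(U(x,y),u(x,y))=y$ and solving the resulting linear system, which is just the Cauchy--Riemann bookkeeping you phrase via $w^{-1}$), and the identities \eqref{eq:etauu} are obtained by the same differentiation of $f(x)=v(\eta(x),0)$. The only cosmetic difference is the order in which the two identities in \eqref{eq:etauu} are derived.
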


	\begin{proof}  We start by proving that $v$ satisfies the equations \eqref{e:system00-w:equation-w}--\eqref{e:system00-w:boundary-condition-w=0}. First notice that \(v\) is harmonic since it is the second component of a conformal map. Moreover, since
		$$v\big(U(x,y),u(x,y)\big)=y\quad\text{for every}\quad (x,y)\in \Omega_\rho,$$
		taking the derivatives with respect to $x$ and $y$, we obtain that
		\begin{align*}
		\partial_{x'} v\big(U(x,y),u(x,y)\big)\partial_xU(x,y)+	\partial_{y'} v\big(U(x,y),u(x,y)\big)\partial_xu(x,y)=0,\\
	\partial_{x'} v\big(U(x,y),u(x,y)\big)\partial_yU(x,y)+	\partial_{y'} v\big(U(x,y),u(x,y)\big)\partial_yu(x,y)=1.
	\end{align*}
	By exploiting that  $\partial_xU=\partial_yu$ and $\partial_yU=-\partial_xu$, we get
	 \begin{align}
	 \partial_{x'} v(x',y')\,\partial_yu(x,y)+	\partial_{y'} v(x',y')\,\partial_xu(x,y)=0,\label{e:partial_x2}\\
	 -\partial_{x'} v(x',y')\,\partial_xu(x,y)+	\partial_{y'} v(x',y')\,\partial_yu(x,y)=1.\label{e:partial_y2}
	 \end{align}
	 Solving the system \eqref{e:partial_x2}-\eqref{e:partial_y2} leads to
	 \begin{equation}\label{e:boh}
	 \partial_{y'}v(x',y')=\frac{\partial_yu(x,y)}{|\nabla u|^2(x,y)}
	 \qquad \text{and}\qquad
	 \partial_{x'}v(x',y')=-\frac{\partial_xu(x,y)}{|\nabla u|^2(x,y)}\,.
	 \end{equation}
Thus, we obtain
	 \begin{equation}\label{e:gradv}
	 |\nabla u|(x,y)\, |\nabla v|(x',y')=1\,,
	 \end{equation}
which gives both \eqref{e:system00-w:boundary-condition-w=0} and \eqref{e:system00-w:boundary-condition-w>0}.
We next prove \eqref{eq:etauu}. Using that $u(x,f(x))\equiv 0$, we get 
	 $$
	 f'(x)=- \frac{\partial_{x}u(x,f(x))}{\partial_{y}u(x,f(x))}\,,
	 $$
	 which together with \eqref{e:boh} gives the first part of \eqref{eq:etauu}. For the second part, we notice that the identity $v(\eta(x),0)=f(x)$ gives that
	 $$f'(x)=\eta'(x)\partial_{x'}v(\eta(x),0)\,,$$
	 which, combined with the first identity in \eqref{eq:etauu}, concludes the proof.
	 	\end{proof}

\subsection{Proof of \cref{thm:ass2}}
Let $v$ be as in the previous section and let
$$Q:=\partial_{z'} v=\partial_{x'}v-i\partial_{y'}v,$$
where $z'=x'+iy'$. Since $v$ satisfies \eqref{e:system00-w:equation-w}-\eqref{e:system00-w:boundary-condition-w=0}, we get that
\begin{equation*}
\begin{cases}
\partial_{\bar z'}Q=0\quad\text{in}\quad B_r\cap\{y'>0\},\\
|Q|= 1\quad\text{on}\quad B_r\cap\{y'=0\}\setminus\mathcal C_v,\\
\text{Re}\,Q=0\quad\text{on}\quad B_r\cap\{y'=0\}\cap\mathcal C_v,
\end{cases}
\end{equation*}
where the set $\mathcal C_v$ was defined in \eqref{e:contact-set-v}.
Consider now the function
$$P=-i\frac{Q+i}{Q-i}=-i\frac{(Q+i)(\bar Q+i)}{|Q-i|^2}=\frac{2\,\text{Re}\,Q}{|Q-i|^2}-i\frac{|Q|^2-1}{|Q-i|^2}\,.$$
Then, we have that $P(0)=0$ and 
\begin{equation*}
\begin{cases}
\partial_{\bar z'}P=0\quad\text{in}\quad B_r\cap\{y'>0\},\\
\text{Re}\, P= 0\quad\text{on}\quad B_r\cap\{y'=0\}\cap\mathcal C_v,\\
\text{Im}\, P=0\quad\text{on}\quad B_r\cap\{y'=0\}\setminus\mathcal C_v,
\end{cases}
\end{equation*}
which implies that $P^2(0)=0$ and 
\begin{equation*}
\begin{cases}
\partial_{\bar z'}(P^2)=0\quad\text{in}\quad B_r\cap\{y'>0\},\\
\text{Im}\, (P^2)= 0\quad\text{on}\quad B_r\cap\{y'=0\}.
\end{cases}
\end{equation*}
As a consequence, the zero set 
$$\mathcal Z(P)=\Big\{z'\in B_r\,:\, P(z')=0,\ \text{Im}\,z'=0\Big\},$$
%$$\{P=0\}\cap B_r\cap\{y'=0\},$$
is discrete or coincides with $B_r\cap\{y'=0\}$. Now, \cref{thm:ass2} \ref{item:aa} follows since
$$
P(z')=0\qquad\Leftrightarrow
\qquad
\begin{cases}\partial_xu(x,y)=0,\\
\partial_yu(x,y)=1\,,
\end{cases}
$$
that is, every branch point $(x,y)\in\mathcal S_1(u)$ corresponds to a zero $z'$ of $P$.\smallskip

We next prove \cref{thm:ass2} \ref{item:bb}. Let $z_0=0$ be an isolated point of $\mathcal S_1(u)$ and $z_0'=0$ be the corresponding point in $\mathcal Z(P)$. Since zero is an isolated point of $\mathcal Z(P)$ and since 
$$\text{Re}\,P(z')\cdot\text{Im}\,P(z')=0\quad\text{on}\quad \{\text{Im}\,z'=0\},$$ 
we have the following three possibilities in a neighborhood of zero:
\begin{enumerate}[\rm(1)]
\item $\text{Re}\,P(z')\equiv0$ on $\{y'=0\}$, and $\text{Im}\,P(z')\neq0$ on $\{y'=0\}\setminus\{x'=0\}$;
\item $\text{Im}\,P(z')\equiv0$ on $\{y'=0\}$, and $\text{Re}\,P(z')\neq0$ on $\{y'=0\}\setminus\{x'=0\}$;
\item up to changing the direction of the real axis $\{y'=0\}$ we have 
$$\begin{cases}
\text{Re}\,P(z')\equiv0\quad\text{and}\quad\text{Im}\,P(z')\neq0\quad\text{on}\quad\{y'=0\}\cap\{x'>0\};\\ 
\text{Re}\,P(z')\neq0\quad\text{and}\quad\text{Im}\,P(z')\equiv0\quad\text{on}\quad\{y'=0\}\cap\{x'<0\}.
\end{cases}$$
\end{enumerate}	
We will show that each of these cases corresponds to one of the points \ref{item:bb1}, \ref{item:bb2} and \ref{item:bb3} of \cref{thm:ass2}. We first suppose that (3) holds. Then $P$ solves the problem
\begin{equation*}
\begin{cases}
\partial_{\bar z'}P=0\quad\text{in}\quad B_r\cap\{y'>0\},\\
\text{Re}\, P= 0\quad\text{on}\quad B_r'\cap\{x'\geq 0\},\\
\text{Im}\, P=0\quad\text{on}\quad B_r'\cap\{x'<0\}.
\end{cases}
\end{equation*}
We next notice that
$$
\partial_{x'}v-i\partial_{y'} v=Q=\frac{1+iP}{P+i}=\frac{2 {\rm Re}(P)}{|P+i|^2}-i \frac{1-|P|^2}{|P+i|^2}\,.
$$
so that
$$
\partial_{x'} v= \frac{2\,{\rm Re}(P)}{|P+i|^2}
\qquad \text{and}\qquad
\partial_{y'}v=\frac{1-|P|^2}{|P+i|^2}\,.
$$
In particular, since the function $\eta$ is increasing and $\eta(0)=0$, we get
$$
\partial_{x'}v\big(\eta(x),0\big)\equiv 0 \quad \text{for}\quad x\ge0 \,.
$$
Integrating this identity and taking into account that $v\big(\eta(0),0\big)=v(0,0)=0$, we obtain
$$
f(x)=v\big(\eta(x),0\big)=\int_{0}^x \partial_{x'}v\big(\eta(t),0\big)\, \eta'(t)\, dt=0 \quad \text{for}\quad x\ge0 \,.
$$
Conversely, assume that $x<0$ and let $x'=\eta(x)<0$. Then, ${\rm Im}(P(x'))= 0$ and
$$
\partial_{x'} v(x',0)= \frac{2P(x')}{1+P^2(x')}
\qquad \text{and}\qquad
\partial_{y'}v(x',0)=\frac{1-P^2(x')}{1+P^2(x')} \qquad \text{for}\qquad z'=x'<0\,.
$$ 
In particular, from \eqref{eq:etauu} it follows that 
$$
\begin{cases}
\displaystyle\eta'(x)=\frac{1+P^2(\eta(x))}{1-P^2(\eta(x))} &\text{ if }x<0\\
\eta(0)=0\,, 
\end{cases}
$$
which implies, by Cauchy-Kovalevskaya theorem, that $\eta\colon (-\rho,0] \to \R$ is an analytic function, with $\eta'(0)=1$, since $P(0)=0$.
Since for $x<0$ we have
\begin{equation}\label{e:eta-f}
\eta'(x)=\sqrt{1+f'(x)^2} \qquad \Rightarrow \quad f'(x)=\sqrt{\eta'(x)^2-1},
\end{equation}
we get that $f':(-\rho,0]\to\R$ is of the form 
$$f'(x)=x^{\sfrac k2} \psi(x),$$
for some $k\ge 1$ and some analytic function $\psi:(-\rho,0]\to\R$ with $\psi(0)>0$. It follows that there is an analytic function $\phi$, such that $\phi(0)>0$ and
\begin{align*}
f(x)&=
0\quad\text{if}\quad x\geq 0\qquad\text{and}\qquad
f(x)=x^{\frac{k+2}2} \phi(x)\quad\text{if}\quad x<0\,.
\end{align*}
Suppose now that (2) holds. Then $\text{Im}\, P\equiv 0$ on the real axis $\{y'=0\}$ and so, $P$ (not only $P^2$) is an holomorphic function. As a consequence, also $Q$ is holomorphic. Thus, $\partial_{y'}v(x',0)$ is analytic. Since, $\eta:(-\rho,\rho)\to\R$ solves the equation 
$$\displaystyle\eta'(x)=\frac{1}{\partial_{y'}v(\eta(x),0)}\ ,\qquad 
\eta(0)=0\,,$$
we get that $\eta$ is analytic and, by \eqref{e:eta-f}, so is $f$. This gives \ref{item:bb2}. \smallskip

\noindent Finally, we suppose that (1) holds. Since $\text{Im}\, P\neq 0$ on $\{y'=0\}\setminus\{0\}$, we get that the contact set $\mathcal C_v$ contains a neighborhood of zero. As a consequence also the contact set $\mathcal C_1(u)$ contains a neighborhood of zero (see \cref{oss:contact-set}), from which we obtain \ref{item:bb1}.\qed

\subsection{Proof of \cref{thm:example-one-phase}}Finally we come to the proof of \cref{thm:example-one-phase}, which is obtained by reversing the construction from the previous subsection. 

\begin{proof}[Proof of \cref{thm:example-one-phase}] For any $k$ of the form $k= 2n-\frac32$ with $n\in \N_{\geq1}$, we define
	$$P(z)=(iz)^k=\rho^k\Big(-\sin (k\theta)+i\cos (k\theta)\Big).$$ 
In particular, setting $\mathcal C_P:=\{ (x,0)\in \R^2\,:\, x\geq0 \} $ we have
\begin{equation*}
\begin{cases}
\partial_{\bar z}P=0\quad\text{in}\quad \{y>0\},\\
\text{Re}\, P= 0\quad\text{and}\quad\text{Im}\, P>0\quad\text{on}\quad \{x>0\}\\
\text{Re}\, P<0\quad\text{and}\quad\text{Im}\, P=0\quad\text{on}\quad \{x<0\}\,.
\end{cases}
\end{equation*}
Then we consider a radius $r\in(0,1)$ and the function $Q\colon B_r\cap \{y\geq 0\} \to \C$ 
$$
Q=\frac{1+iP}{P+i}=\frac{2\,{\rm Re}(P)}{|P+i|^2}-i \frac{1-|P|^2}{|P+i|^2}\,.
$$
Notice that $Q$ is still conformal in $B_r\cap \{y> 0\}$ and that we have 
\begin{equation*}
\begin{cases}
\partial_{\bar z}Q=0\quad\text{in}\quad \{y>0\},\\
\text{Re}\, Q= 0\ ,\quad \text{Im}\,Q\in(-1,0)\quad\text{and}\quad |Q|<1\quad\text{on}\quad \{x>0\}\,,\\
\text{Re}\, Q<0,\quad \text{Im}\,Q\in(-1,0)\quad\text{and}\quad|Q|=1\quad\text{on}\quad \{x<0\}\,.
\end{cases}
\end{equation*}
Since $B_r\cap\{y> 0\}$ is simply connected, there is a function $v\colon B_r\cap\{y\ge0\} \to \R$ such that
$$
\partial_{z}v=\partial_xv-i\partial_yv=Q \quad \text{in}\quad B_r\cap\{y>0\}\,.
$$
Precisely, for every $z=x+iy$ in $B_r\cap\{y\ge0\}$, $v$ is given by the formula 
$$v(z)=v(x,y)=\int_0^1\Big(x\,\text{Re}\,Q(tz)-y\,\text{Im}\,Q(tz)\Big)\,dt.$$
Thus, $v$ is a solution to the problem
\begin{equation*}
\begin{cases}
\Delta v=0\quad\text{in}\quad B_r\cap \{y>0\},\\
v= 0\quad\text{and}\quad |\nabla v|<1\quad\text{on}\quad B_r\cap\{x>0\}\,,\\
v>0\quad\text{and}\quad|\nabla v|=1\quad\text{on}\quad B_r\cap \{x<0\}\,.
\end{cases}
\end{equation*}
Moreover, we notice that $v(0,0)=0$ and $\partial_yv(0,0)=1$. Thus, by choosing $r>0$ small enough, we may suppose that $v>0$ in $B_r\cap \{y>0\}$.
%$$v(x,0)=0\quad\text{if}\quad x>0\ ;\quad v(x,0)>0\quad\text{if}\quad x<0\,.$$
We next consider the harmonic conjugate $V\colon B_r\cap\{y>0\} \to \R$ of $v$ and the inverse hodograph transform 
%$S\colon B_r\cap\{y>0\} \to B_r\cap\{y>0\}$ defined by
$$
S\colon B_r\cap\{y\ge 0\} \to \R^2\ ,\quad S(x,y):=\big(V(x,y), v(x,y)\big)\,.
$$
Tracing backwards the argument from \cref{ss:hodotrans}, we have that when $r$ is small enough, $S$ is a diffeomorphism; we can then consider its inverse 
$$T:S\big(B_r\cap\{y\ge 0\}\big)\to B_r\cap\{y\ge 0\}\ ,\qquad T(x',y')=\big(U(x',y'), u(x',y')\big),$$ 
where we notice that the positivity set $\Omega_u=\{u>0\}$ of the second component $u$ of $T$ is precisely $S\big(B_r\cap\{y>0\}\big)$ and that, since $v\ge 0$, $\Omega_u=S\big(B_r\cap\{y>0\}\big)$ is contained in the upper half-plane $\{y'>0\}$. Now, reasoning as in \cref{lem:con1ph} (see \eqref{e:gradv}), we get that 
$$|\nabla u(x',y')|\,|\nabla v(x,y)|=1,$$
and that, in a small ball $B_\rho$, $u$ is a solution to the problem
\begin{align}
	\Delta u=0\quad\text{in}\quad \Omega_u\cap B_\rho,\\
	u=0\quad\text{on}\quad B_\rho\cap\{y'=0\},\\
	|\nabla u|=1\quad\text{on}\quad \partial\Omega_u\cap\{y'>0\},\\
	|\nabla u|\ge 1\quad\text{on}\quad \partial\Omega_u\cap\{y'=0\},
	\end{align}
where $\partial\Omega_u\cap\{y'=0\}=\{x'\ge 0\}\cap\{y'=0\}$ and $|\nabla u|\ge 1$ {on} $\{x'\ge 0\}\cap\{y'=0\}$.	
We now define the function $f$ describing the boundary $\partial \Omega_u$ (see \eqref{eq:fbgraph}) and the function $\eta(x)=U(x,f(x))$ to be as in the proof of \cref{thm:ass2}. Then, $\eta$ is a solution to   
$$
\begin{cases}
\displaystyle\eta'(x)=\frac{1+P^2(\eta(x))}{1-P^2(\eta(x))} &\text{ if }x<0\\
\eta(0)=0\,, 
\end{cases}
$$
and so, it is analytic since $P^2(z)=iz^{4n-3}$ with $n\in\N$. Finally, since $\eta(x)=x+o(x),$ we can write the function $\eta$ as 
$$|\eta(x)|^{\sfrac12}=|x|^{\sfrac12}\psi(x)\quad\text{ for }x\le0,$$
where $\psi$ is analytic and $\psi(0)=1$. Thus, we get the precise form of $f$ by the formula
$$
f(x)=v(\eta(x),0)=\begin{cases}
\displaystyle{\int_0^{x} \frac{-|\eta(t)|^{2n-\sfrac12} }{|\eta(t)|^{4n-3}+1} dt} &\text{if }x<0,\smallskip\\
\quad 0&\text{if }x\geq 0\,,
\end{cases}
$$	
and we notice that $f(x)=|x|^{2n-\sfrac12}\big(1+o(1)\big)$ for $x<0$. This concludes the proof.	
\end{proof}

\section{The symmetric two-phase problem and some remarks}\label{ss:2phase}
Let $0=z_0\in \mathcal S$ and let $f_\pm$ be as in \eqref{eq:2phgraph}. We define 
	$$\Omega_\rho^\pm=\Big\{(x,y)\in\mathcal I_\rho\times \mathcal I_\rho\ :\ f_\pm(x)>y\Big\},$$
	and 
	$$\Gamma_\rho^\pm:=\Big\{(x,y)\in\mathcal I_\rho\times \mathcal I_\rho\ :\ f_\pm(x)=y\Big\}.$$
	In what follows, we perform the hodograph transform of $u_+$ in $\Omega_\rho^+$ and in $u_-$ in $\Omega_\rho^-$.\\ In order to simplify the notation, we set
	$$i:=+\ \text{or}\ -.$$
	Let $\eta_\pm, T_\pm=(U_\pm, u_\pm), S_\pm=(V_\pm, v_\pm)$ be the functions constructed in \cref{ss:har} and \cref{ss:hodotrans} separately for $u_+$ and $u_-$. Recall that the functions $v_i$, $i=\pm$, contain all the information of the free boundaries $\Gamma_\rho^i$. Precisely, for every $x\in \mathcal I_\rho$, we have 
	$$y=f_i(x)\ \Leftrightarrow\ (x,y)\in\Gamma_\rho^i\ \Leftrightarrow\ y=v_i\big(U_i(x,y),u_i(x,y)\big)=v_i(x',0).$$
	As a consequence, we get the equation 
	$$f_i(x)=v_i(\eta_i(x),0)\quad\text{for every}\quad x\in\mathcal I_\rho.$$
	In particular, we have 
	\begin{equation}\label{e:order-two-phase}
	v_+(\eta_+(x),0)\ge v_-(\eta_-(x),0)\quad\text{for every}\quad x\in\mathcal I_\rho.
	\end{equation}

	\begin{lemma}\label{lem:2phlemma}
	There is $r>0$ such that  
		$$B_r\cap \{y'\ge 0\}\subset T_+\big(\Omega_\rho^+\cup\Gamma_\rho^+\big)\qquad\text{and}\qquad B_r\cap \{y'\le 0\}\subset T_-\big(\Omega_\rho^-\cup\Gamma_\rho^-\big)\,.$$
		The functions
		$$v_\pm:B_r\cap\{\pm y'\ge 0\}\to\R\,,$$
		are both  $C^{1,\alpha}$-regular respectively in the half-disks $B_r\cap\{\pm y'\ge 0\}$ up to the hyperplane $\{y'=0\}$, and are $C^{\infty}$ respectively in $B_r\cap\{\pm y'> 0\}$. 
		Furthermore they solve the following thin two-membrane problem
		\begin{align*}
		\Delta v_+=0&\quad\text{in}\quad B_r\cap\{y'>0\},\\
		\Delta v_-=0&\quad\text{in}\quad B_r\cap\{y'<0\},\\
		v_+\big(\eta_+(x),0\big)\ge 	v_-\big(\eta_-(x),0\big)&\quad\text{for}\quad x\in\mathcal I_\rho,\\
		|\nabla v_\pm|	(\eta_\pm(x),0)=1&\quad\text{when}\quad v_+(\eta_+(x),0)> 	v_-(\eta_-(x),0),\\
		\eta_+'(x)\,\partial_{y'}v_+(\eta_+(x),0)=		\eta_-'(x)\,\partial_{y'}v_-(\eta_-(x),0)\le1&\quad\text{when}\quad v_+(\eta_+(x),0)= 	v_-(\eta_-(x),0),
		\end{align*}
		Moreover, for every $x\in \Gamma_\rho$ we have the identities
		\begin{equation}\label{eq:etauu2}
		f'_\pm(x)= \pm\frac{\partial_{x'}v_\pm(\eta_\pm(x),0)}{\partial_{y'}v_\pm(\eta_\pm(x),0)}
		\qquad \text{and}\qquad 
		\eta_\pm'(x)=\frac{1}{\partial_{y'}v_\pm(\eta_\pm(x),0)}.
		\end{equation}
	\end{lemma}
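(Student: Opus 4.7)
The plan is to reduce everything to two independent applications of \cref{lem:con1ph}. In $\Omega_\rho^+$ the function $u_+$ is a nonnegative harmonic function vanishing on $\Gamma_\rho^+$, and the two-phase boundary conditions yield $|\nabla u_+|=1$ on $\Gamma_\rho^+\cap\mathcal O_+$ and $|\nabla u_+|\ge 1$ on $\Gamma_\rho^+\cap\mathcal C_2$. This is exactly the boundary data handled by \cref{lem:con1ph}, with the contact set in that lemma now playing the role of $\Gamma_\rho^+\cap\mathcal C_2$. Symmetrically, $-u_-$ satisfies the analogous one-phase-type problem in $\Omega_\rho^-$.

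First I would invoke \cref{lem:con1ph} for $u_+$, obtaining a radius $r_+>0$, the inclusion $B_{r_+}\cap\{y'\ge 0\}\subset T_+(\Omega_\rho^+\cup\Gamma_\rho^+)$, the claimed regularity and harmonicity of $v_+$, the identity $|\nabla u_+|\,|\nabla v_+|=1$ of \eqref{e:gradv}, and the plus-sign version of \eqref{eq:etauu2}. Applying the construction to $u_-$, after reflecting across the real axis so that the convention $u_-\le 0$ is compatible with \cref{lem:con1ph}, produces $v_-$ on $B_{r_-}\cap\{y'\le 0\}$ with the analogous properties. Setting $r:=\min\{r_+,r_-\}$ then settles harmonicity, regularity, and both identities in \eqref{eq:etauu2}.

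The three boundary-data conditions on $\{y'=0\}$ are the only genuinely two-phase part of the statement. The ordering $v_+(\eta_+(x),0)\ge v_-(\eta_-(x),0)$ is immediate from $f_\pm(x)=v_\pm(\eta_\pm(x),0)$, which is established in the construction preceding \cref{lem:con1ph}, combined with $f_+\ge f_-$ from \eqref{eq:2phgraph}. The unit-gradient condition $|\nabla v_\pm|(\eta_\pm(x),0)=1$ at points $x$ with $f_+(x)>f_-(x)$ holds because then $(x,f_\pm(x))\in\mathcal O_\pm$, so $|\nabla u_\pm|(x,f_\pm(x))=1$, and \eqref{e:gradv} transports the equality to $v_\pm$. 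The matching identity on the contact set is a direct consequence of \eqref{eq:etauu2}: on $\Gamma_\rho^\pm$ one always has $\eta_\pm'(x)\,\partial_{y'}v_\pm(\eta_\pm(x),0)=1$, so at any $x$ with $f_+(x)=f_-(x)$ both sides equal $1$ and the inequality $\le 1$ is automatic.

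Since the lemma is essentially the sum of two independent one-phase statements plus the elementary observation $f_+\ge f_-$, no serious obstacle is expected. The only point that deserves genuine care is the sign and orientation bookkeeping needed to apply \cref{lem:con1ph} to $u_-$, whose hodograph image naturally lies in the lower half-plane, and to verify that the formulas \eqref{eq:etauu2} with the correct choice of signs survive this reflection.
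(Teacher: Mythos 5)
Your overall architecture --- one conformal hodograph transform per phase, reusing the computations behind \cref{lem:con1ph} for harmonicity, regularity, \eqref{e:gradv} and \eqref{eq:etauu2}, and deducing the ordering from $v_\pm(\eta_\pm(x),0)=f_\pm(x)$ together with $f_+\ge f_-$ --- is exactly the paper's. One minor caveat: \cref{lem:con1ph} does not apply verbatim to $u_+$, because there the contact set lies on the straight line $\{y=0\}$ and the conclusion includes $v=0$ with $|\nabla v|\le1$ on $\mathcal C_v$, whereas here $\Gamma_\rho^+\cap\mathcal C_2(u_+,u_-)$ is a curved graph and $v_+$ need not vanish on its image. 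What carries over is the construction of $U_\pm$, $T_\pm$, $S_\pm$ and the identities \eqref{e:boh}, \eqref{e:gradv}, \eqref{eq:etauu}, not the one-phase boundary conditions.

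The substantive gap is your treatment of the last displayed condition. You are right that, read literally, $\eta_\pm'(x)\,\partial_{y'}v_\pm(\eta_\pm(x),0)\equiv 1$ by \eqref{eq:etauu2}, so the display reduces to $1=1\le 1$; but this means your argument never invokes the transmission condition $|\nabla u_+|=|\nabla u_-|\ge 1$ on the contact set, which is the entire coupling between the two phases and the only reason the lemma is not just two copies of \cref{lem:con1ph}. What the paper's proof actually establishes at contact points (its Case~1), and what \cref{c:symmetric} and the proof of \cref{thm:ass4} \ref{item:aaa} subsequently rely on, is the nontrivial matching of the full gradients, $\partial_{x'}v_+(\eta_+(x),0)=\partial_{x'}v_-(\eta_-(x),0)$ and $\partial_{y'}v_+(\eta_+(x),0)=\partial_{y'}v_-(\eta_-(x),0)$, together with $\sqrt{1+f'(x)^2}\,\partial_{y'}v_\pm\le 1$, i.e.\ $|\nabla v_\pm|(\eta_\pm(x),0)\le 1$. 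This is obtained by combining $f_+=f_-$ and $f_+'=f_-'$ at contact points with $|\nabla u_+|=|\nabla u_-|\ge1$ (which yields $\partial_y u_+=\partial_y u_-$ and $\sqrt{1+(f')^2}\,\partial_y u_\pm\ge 1$) and then inverting the linear system relating $\nabla v_\pm$ to $\nabla u_\pm$. None of this follows from two decoupled applications of the one-phase lemma, so you should either prove the gradient-matching statement (the evident intended content of the display, which as written is vacuous) or at least record and derive it, since the literal display is insufficient for the sequel.
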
	
	\begin{proof} We reason precisely as in \cref{lem:con1ph}.
		Since 
		$$v_i\big(U_i(x,y),u_i(x,y)\big)=y\quad\text{for every}\quad (x,y)\in \Omega_\rho^i,$$
		taking the derivatives with respect to $x$ and $y$, we obtain that
		\begin{equation*}
		\begin{cases}
		\partial_{x'} v_i\big(U_i(x,y),u_i(x,y)\big)\partial_xU_i(x,y)+	\partial_{y'} v_i\big(U_i(x,y),u_i(x,y)\big)\partial_xu_i(x,y)=0,\\
		\partial_{x'} v_i\big(U_i(x,y),u_i(x,y)\big)\partial_yU_i(x,y)+	\partial_{y'} v_i\big(U_i(x,y),u_i(x,y)\big)\partial_yu_i(x,y)=1.
		\end{cases}
		\end{equation*}
		Since, $\partial_xU_i=\partial_yu_i$ and $\partial_yU_i=-\partial_xu_i$, we get
		\begin{equation*}
		\begin{cases}
		-\partial_{x'} v_i(x',y')\partial_yu_i(x,y)+	\partial_{y'} v_i(x',y')\partial_xu_i(x,y)=0,\\
		\partial_{x'} v_i(x',y')\partial_xu_i(x,y)+	\partial_{y'} v_i(x',y')\partial_yu_i(x,y)=1.
		\end{cases}
		\end{equation*}
		When $y'=0$, we can write 
		$$x'=\eta_i(x)\qquad\text{and}\qquad y=f_i(x).$$
		Thus, we have 
			\begin{equation*}
					\begin{cases}
		-\partial_{x'} v_i(\eta_i(x),0)\partial_yu_i(x,f_i(x))+	\partial_{y'} v_i(\eta_i(x),0)\partial_xu_i(x,f_i(x))=0,\\
		\partial_{x'} v_i(\eta_i(x),0)\partial_xu_i(x,f_i(x))+	\partial_{y'} v_i(\eta_i(x),0)\partial_yu_i(x,f_i(x))=1,
				\end{cases}
		\end{equation*}
		which we will simply write as
		\begin{equation}\label{e:main-system-u-v}
		\begin{cases}
		\begin{array}{ll}
		-\partial_{x'} v_i\,\partial_yu_i+	\partial_{y'} v_i\,\partial_xu_i=0,\\
		\ \, \,\partial_{x'} v_i\,\partial_xu_i+	\partial_{y'} v_i\,\partial_yu_i=1,
		\end{array}
		\end{cases}
		\end{equation}
		and we remember that all the derivatives of $v$ are computed in $(\eta_i(x),0)$, while all the derivatives of $u$ are calculated in $(x,f_i(x))$.
		We next consider two cases: \smallskip
		
		\noindent {\bf Case 1.} $v_+(\eta_+(x),0)=v_-(\eta_-(x),0)$. We set 
		$$f(x):=f_+(x)=f_-(x)\qquad\text{and}\qquad f'(x):=f_+'(x)=f_-'(x),$$ 
		and we notice that we have the system 
		\begin{align}
		\partial_xu_++f'(x)\partial_yu_+=0=
		\partial_xu_-+f'(x)\partial_yu_-\label{e:system1-derivatives-u:1}\\
				-f'(x)\partial_xu_++\partial_yu_+=-f'(x)\partial_xu_-+\partial_yu_-	\label{e:system1-derivatives-u:2}\\
	-f'(x)\partial_xu_\pm+\partial_yu_\pm\ge \big(1+(f'(x))^2\big)^{\sfrac12}.\label{e:system1-derivatives-u:3}		
\end{align}
		where again all the partial derivatives of $u_+$ and $u_-$ are computed in 	$(x,f(x))$.\\	 Now, using \eqref{e:system1-derivatives-u:1} in \eqref{e:system1-derivatives-u:2} and \eqref{e:system1-derivatives-u:3}, we get 
		\begin{equation}\label{e:system1-derivatives-u:22}
		\partial_yu_+=\partial_yu_-
		\end{equation}	
			\begin{equation}\label{e:system1-derivatives-u:33}		
			\sqrt{1+(f'(x))^2}\ \partial_yu_\pm	\ge 1.
		\end{equation}
	On the other hand, using \eqref{e:system1-derivatives-u:1} in the system \eqref{e:main-system-u-v}, it becomes
		\begin{equation}\label{e:main-system-u-v-2}
	\begin{cases}
	\begin{array}{ll}
	\big(\partial_{x'} v_i+	\partial_{y'} v_i\,f'(x)\big)\,\partial_yu_i=0,\\
	\big(-f'(x)\,\partial_{x'} v_i+	\partial_{y'} v_i\big)\,\partial_yu_i=1,
	\end{array}
	\end{cases}
	\end{equation}
	so we get 
	$$	\big(1+f'(x)^2\big)\, \partial_{y'} v_\pm\,\partial_yu_\pm=1,$$
	which gives that  
	$$\partial_{y'} v_+=\partial_{y'} v_-\ ,\qquad \partial_{x'} v_+=\partial_{x'} v_- \qquad\text{and}\qquad \sqrt{1+(f'(x))^2}\ \partial_{y'}v_\pm	\le 1\,, $$
all the derivatives of $v_\pm$ being calculated in $(\eta_\pm(x),0)$.
		\medskip
		
		\noindent {\bf Case 2.}  $v_+(\eta_+(x),0)>v_-(\eta_-(x),0)$. In this case the two free boundaries separate, that is $f_+>f_-$ in a neighborhood of $x$. Then, for each $i=\pm$, we can proceed as in the proof of \eqref{e:system00-w:boundary-condition-w>0} in \cref{lem:con1ph}. \smallskip

		Finally, we notice that \eqref{eq:etauu2} follows by taking the reflection $\bar{u}(x,y):=-u_-(x,-y)$ and applying the identities from \eqref{eq:etauu} to  $u_+$ and $\bar u$.
	\end{proof}	

When $u$ is a symmetric solution to the two-phase problem, we have the following
\begin{corollary}\label{c:symmetric}
Let $u$ be a symmetric solution to the two-phase problem, then, up to taking a smaller radius $r>0$, the functions $v_\pm$ constructed in \cref{lem:2phlemma} satisfy
\begin{align*}
		\Delta v_+=0&\quad\text{in}\quad B_r\cap\{y'>0\},\\
		\Delta v_-=0&\quad\text{in}\quad B_r\cap\{y'<0\},\\
%		v_+\big(x',0\big)\ge 	v_-\big(x',0\big)&\quad\text{for}\quad x'\in B'_r,\\
		|\nabla v_\pm|	(x',0)=1&\quad\text{when}\quad x'\in B'_r\setminus\mathcal C_v\\
		|\nabla v_+|(x',0)=|\nabla v_-|(x',0)\le1&\quad\text{when}\quad  B'_r\cap\mathcal C_v\,,
\end{align*}
where we denote by $\mathcal C_v$ the contact set 
\begin{equation}\label{e:def-Cv-two-phase}
\mathcal C_v:=\big\{(x',0)\ :\ x'=\eta(x),\ x\in \mathcal I_\rho,\ f_+(x)=f_-(x)\big\}\,.
\end{equation}
\end{corollary}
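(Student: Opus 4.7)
The plan is to reduce the asymmetric boundary conditions of \cref{lem:2phlemma} to the clean statements in the corollary by showing that, under the symmetry assumption of \cref{d:acf}, the two hodograph parameters coincide on the contact set: $\eta_+(x)=\eta_-(x)$ whenever $f_+(x)=f_-(x)$, in a neighborhood of $0$. Once this identification is made, a common $\eta$ makes $\mathcal C_v$ in \eqref{e:def-Cv-two-phase} well-defined, and the boundary conditions in \cref{lem:2phlemma} collapse to the two statements of the corollary. Harmonicity of $v_\pm$ is inherited directly from \cref{lem:2phlemma}.

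First I would consider the function $g:=\eta_+-\eta_-$ on $\mathcal I_\rho$, using the integral formula
$$\eta_\pm(x)=\int_0^x|\nabla u_\pm|(t,f_\pm(t))\sqrt{1+f_\pm'(t)^2}\,dt$$
derived in \cref{ss:har}. On the contact set $f_+=f_-$ and $|\nabla u_+|=|\nabla u_-|$ by \eqref{e:2contact set condition}, so $g'\equiv 0$ there. On each one-phase interval $\mathcal I_i$, $|\nabla u_\pm|=1$ by \eqref{e:2free-boundary-condition}-\eqref{e:2free-boundary-condition2}, so $g'(t)=\sqrt{1+f_+'(t)^2}-\sqrt{1+f_-'(t)^2}$, and the symmetry assumption \eqref{eq:symm} translates into $\int_{\mathcal I_i}g'(t)\,dt=\mathcal H^1(\Gamma_+^i)-\mathcal H^1(\Gamma_-^i)=0$ for every $i$ with $\overline{\mathcal I_i}\subset\mathcal I_\rho$.

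Next I would choose $r>0$ small enough that every one-phase interval $\mathcal I_i$ meeting $\mathcal I_r$ satisfies $\overline{\mathcal I_i}\subset\mathcal I_\rho$; this is possible since $0$ is a contact point (hence not interior to any one-phase interval), and at most two of the $\mathcal I_i$ can fail this containment (those touching $\pm\rho$), and these are at positive distance from $0$. Combining $g(0)=0$, continuity of $g$, $g'\equiv 0$ on contact parts, and the vanishing of the increment of $g$ across every complete one-phase interval $\mathcal I_i\subset\mathcal I_r$ from the previous step, I conclude that $g(x)=0$ at every contact point $x\in\mathcal I_r$. In particular $\eta:=\eta_+=\eta_-$ is well-defined on contact points, and for every complete one-phase interval $\mathcal I_i\subset\mathcal I_r$ the two images $\eta_+(\mathcal I_i)$ and $\eta_-(\mathcal I_i)$ coincide, since they share the same endpoint values.

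Finally, I would read off the corollary from \cref{lem:2phlemma}. For $x'\in B_r'\cap\mathcal C_v$ write $x'=\eta(x)$ with $f_+(x)=f_-(x)$; Case 1 in the proof of \cref{lem:2phlemma} gives $\nabla v_+=\nabla v_-$ at $(x',0)$ and $\sqrt{1+f'(x)^2}\,\partial_{y'}v_\pm(x',0)\le 1$, and since $\partial_{x'}v_\pm=-f'(x)\,\partial_{y'}v_\pm$ at contact, we obtain $|\nabla v_+|(x',0)=|\nabla v_-|(x',0)\le 1$. For $x'\in B_r'\setminus\mathcal C_v$, the point $x'$ lies in the common image $\eta_+(\mathcal I_i)=\eta_-(\mathcal I_i)$ of some one-phase interval $\mathcal I_i\subset\mathcal I_r$, and Case 2 in the proof of \cref{lem:2phlemma} (analogous to the one-phase derivation of \cref{lem:con1ph}) yields $|\nabla v_\pm|(x',0)=1$. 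The main obstacle is precisely the identification $\eta_+=\eta_-$ on contact points: without the symmetry assumption, the target-side boundary data of $v_+$ and $v_-$ would be shifted relative to each other across one-phase intervals, and the two-phase hodograph problem would lose its symmetric form.
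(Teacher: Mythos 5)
Your argument is correct and is essentially the paper's own proof: the symmetry hypothesis \eqref{eq:symm} forces $\eta_+=\eta_-$ at every contact point (equal integrands on the contact set, equal increments $\mathcal H^1(\Gamma_+^i)=\mathcal H^1(\Gamma_-^i)$ across each one-phase interval), hence $\eta_+(\mathcal I_i)=\eta_-(\mathcal I_i)$, and the boundary conditions of \cref{lem:2phlemma} then collapse to those of the corollary; you are in fact more explicit than the paper about why the increments match. The one loose end is your claim that the intervals $\mathcal I_i$ touching $\pm\rho$ are at positive distance from $0$: a one-phase interval of the form $(0,\rho)$ or $(-\rho,0)$ is admissible (this is exactly the cusp configuration of \cref{thm:ass4}\ref{item:bbb3}), and for it your containment criterion cannot be met for any $r$; the conclusion still holds because such an interval contains no further contact points, so it suffices to shrink $r$ until $B_r'$ lies in the intersection of its two images $\eta_\pm(\mathcal I_i)$, which share the endpoint $\eta_\pm(0)=0$.
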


\begin{proof} By definition 
	$$
	\eta_\pm(x)=\int_0^x |\nabla u_\pm|(t, f_\pm(t))\,\sqrt{1+|f'_\pm(t)|^2}\,dt\,.
	$$
	Let $\mathcal I_i$ be the intervals defined in \eqref{e:onephase}, then notice that
	\begin{itemize}
		\item if $t\in \mathcal I_i$, then $|\nabla u_\pm|(t, f_\pm(t))=1$;\smallskip
		\item if $t\in (-\rho,\rho)\setminus \left(\bigcup_i \mathcal I_i\right)$, then $f_+(t)=f_-(t)$ and $|\nabla u_+|(t, f(t))=|\nabla u_-|(t, f(t))$.
	\end{itemize}
	In particular the first bullet implies that 
$$
\eta_+(\mathcal I_i)=\eta_-(\mathcal I_i)\qquad \forall i\,, 
$$ 
which combined with the second bullet implies that  
$$
\eta_+\Big(\big\{x\in (-\rho,\rho)\,:\,f_+(x)>f_-(x)\big\}\Big)=\eta_-\Big(\big\{x\in  (-\rho,\rho)\,:\,f_+(x)>f_-(x)\big\}\Big)\,,
$$
from which the conclusion follows from the previous lemma. 
%IS THERE A PROBLEM HERE WITH THE REGULARITY OF $(-\rho,\rho)\setminus \left(\bigcup_iI_i\right)$?
\end{proof}

\begin{oss}
Notice that, in the above proof, we are not claiming that $\eta_+\equiv \eta_-$, but only that branch points are sent in branch points.
\end{oss}

\subsection{Proof of \cref{thm:ass4} (a)}\label{ss:two-phase:a} Let $v_\pm$ be the functions from \cref{c:symmetric} and let
\begin{equation}\label{e:def:Q-two-phase}
Q_\pm:=\partial_{x'}v_\pm-i\partial_{y'}v_\pm
\end{equation}
As in the proof of \cref{thm:ass2}, we have that $Q$ is a solution to 
\begin{equation}\label{e:equations-Q-two-phase}
\begin{cases}
\partial_{\bar z}Q_\pm=0\quad\text{in}\quad B_r\cap\{\pm y'>0\}\,,\\
|Q_\pm|= 1\quad\text{on}\quad B_r\cap\{y'=0\}\setminus\mathcal C_v\,,\\
Q_+=Q_-\quad\text{on}\quad B_r\cap\{y'=0\}\cap\mathcal C_v\,.
\end{cases}
\end{equation}
We then define 
\begin{equation}\label{e:def:P-two-phase}
P_\pm=-i\frac{Q_\pm+i}{Q_\pm-i}=-i\frac{(Q_\pm+i)(\bar Q_\pm+i)}{|Q_\pm-i|^2}=\frac{2\,\text{Re}\,Q_\pm}{|Q_\pm-i|^2}-i\,\frac{|Q_\pm|^2-1}{|Q_\pm+i|^2}\,,
\end{equation}
and we notice that 
\begin{equation*}
\begin{cases}
\partial_{\bar z}P_\pm=0\quad\text{in}\quad B_r\cap\{\pm y'>0\}\,,\\
P_+=P_- \quad\text{on}\quad B_r\cap\{y'=0\}\cap\mathcal C_v\,,\\
\text{Im}\, P_\pm=0\quad\text{on}\quad B_r\cap\{y'=0\}\setminus\mathcal C_v\,.
\end{cases}
\end{equation*}
We now consider the reflection 
%$P'\colon B_r\cap \{y'\geq 0\} \to \C$ defined by
$$
P'\colon B_r\cap \{y'\geq 0\} \to \C\ ,\qquad P'(z):=\overline P_-(\bar z)\,,
$$
so that the functions $P_+$ and $P'$ are both defined on the same domain and we can take
\begin{equation}\label{e:MandD}
M(z):= \frac{P_+(z)+P'(z)}2\qquad \text{and}\qquad D(z):= \frac{P_+(z)-P'(z)}2\,,
\end{equation}
which satisfy the equations
\begin{equation}\label{e:two-phase-M-full}
\begin{cases}
\partial_{\bar z}M=0\quad\text{in}\quad B_r\cap\{y'>0\},\\
\text{Im}\, M= 0\quad\text{on}\quad B_r\cap\{y'=0\}.
\end{cases}
\end{equation}
and
\begin{equation*}
\begin{cases}
\partial_{\bar z}D=0\quad\text{in}\quad B_r\cap\{y'>0\},\\
{\rm Re} D=0 \quad\text{on}\quad B_r\cap\{y'=0\}\cap\mathcal C_v,\\
\text{Im}\, D=0\quad\text{on}\quad B_r\cap\{y'=0\}\setminus\mathcal C_v,
\end{cases}
\end{equation*}
Reasoning as in the proof of \cref{thm:ass2}, $D^2$ we get that $\text{Im}(D^2)=2\text{Re} D\,\text{Im} D=0$ on $\{y'=0\}$ so that $D^2$ can be extended to a conformal map on to the whole of $B_r$, so the set
$$\{D=0\}\cap B_r\cap\{y'=0\},$$
is either discrete or coincides with $B_r\cap\{y'=0\}$. This proves \cref{thm:ass4} \ref{item:aaa} since at every $z'$ on the real line $\{y'=0\}$ we have
$$
D(z')=0\quad\Leftrightarrow
\quad
\begin{cases}P^+=P^-\\
\text{Im} P_\pm=0
\end{cases}\quad\Leftrightarrow
\quad
\begin{cases}Q^+=Q^-\\
|Q_\pm|=1
\end{cases}
\quad\Leftrightarrow
\quad
\begin{cases}\nabla u_+=\nabla u_-\\
|\nabla u_\pm|=1\,,
\end{cases}
$$
that is every branch point of $u$ corresponds to a zero of $D$. \qed

\subsection{Proof of \cref{thm:ass4} (b) and \cref{c:symmetric}} 
\begin{oss}
We notice that in this part of \cref{thm:ass4} we do not assume any symmetry of the solutions, but only that the branch points are isolated. %THAT PART WOULD WORK EVEN IF THE SOLUTION IS NOT SYMMETRIC AS LONG AS WE KNOW THAT branch POINTS ARE ISOLATED. THEREFORE $\eta_\pm$ MUST CONTAIN MORE INFORMATIONS ON THE EXPANSION....	
\end{oss}	
\noindent Let $z_0\in \mathcal S_{2}(u_+,u_-)$ be an isolated point of $\mathcal S_{2}(u_+,u_-)$. If $z_0$ is in the interior of the contact set $\mathcal C_2(u_+,u_-)$, then \ref{item:bb2} is immediate as the function 
$u=u_+-u_-$ is harmonic in a neighborhood of $z_0$. Suppose then that $z_0$ is a branch point: $z_0\in\mathcal B_2(u_+,u_-)$; moreover, since $\mathcal B_2\subset\mathcal S_2$, we have that $z_0$ is isolated in the set of branch points $\mathcal B_2(u_+,u_-)$. This means that in order to complete the proof of \cref{thm:ass4} \ref{item:bbb} we only need to prove \cref{c:symmetric}. We set $z_0=0$ and we consider the following two cases:\smallskip

\noindent {\bf Case 1.} $0$ is isolated also as point of the contact set $\mathcal C_2(u_+,u_-)$, that is $B_r\cap \mathcal C_2(u_+,u_-)=\{0\}$ for some radius $r>0$. In this case, on the free boundaries $\partial\Omega_u^\pm$ we have that $|\nabla u_\pm|=1$ and so, \cref{c:symmetric} \ref{item:bbb1} follows as in the proof of \cref{thm:ass2} \ref{item:bb1}.\smallskip

\noindent{\bf Case 2.} $0$ is not isolated in the set $\mathcal C_2(u_+,u_-)$. Then, since there are no other branch points in a neighborhood of $0$, we can assume that: 
$$f_+(x)=f_-(x)\quad\text{when}\quad x\ge 0\qquad\text{and}\qquad f_+(x)>f_-(x)\quad\text{when}\quad x<0.$$
As above, we define $\eta_\pm$ as 
\begin{equation}\label{e:eta-two-phase}
\eta_\pm(x)=\int_0^x|\nabla u_\pm|(t,f_\pm(t))|\sqrt{1+(f_\pm'(t))^2}\,dt\,,
\end{equation}
while $v_\pm$ are the hodograph transforms of $u_\pm$, for which we recall the identities
$$f_\pm(x)=v_\pm(\eta_\pm(x),0)\quad\text{and}\quad |\nabla v_\pm|(\eta_\pm(x),0)=\frac{1}{|\nabla u|(x,f_\pm(x))}\,.$$
for every $x$ in a neighborhood of zero. Then, since $\eta_+(x)=\eta_-(x)$ for $x\ge 0$, we get that: 
$$\begin{cases}
v_+(x',0)=v_-(x',0)\quad\text{and}\quad \nabla v_+(x',0)=\nabla v_-(x',0)&\quad\text{when}\quad x'\ge 0,\\
|\nabla v_+|(x',0)=|\nabla v_-|(x',0)&\quad\text{when}\quad x'<0.
\end{cases}$$
\begin{oss}
Notice that when $x<0$ we cannot say if $\eta_+(x)=\eta_-(x)$. In particular, we cannot say if $v_+(x',0)\ge v_-(x',0)$ when $x'<0$ and so, we don't know if $\{x'\ge 0\}$ is the contact set $\{x'\ :\ v_+(x',0)=v_-(x',0)\}$. 
\end{oss}	
We next consider the functions $Q_\pm$ and $P_\pm$ given by \eqref{e:def:Q-two-phase} and \eqref{e:def:P-two-phase}, and the functions $D$ and $M$ defined in \eqref{e:MandD}. Then, in a neighborhood $(-r,r)\times[0,r)$ of zero, the difference  $D$ satisfies
 \begin{equation}\label{e:to-phase-D-half}
 \begin{cases}
 \partial_{\bar z}D=0\quad\text{in}\quad (-r,r)\times (0,r),\\
 {\rm Re} D=0 \quad\text{on}\quad (0,r)\times\{0\}\\
 \text{Im}\, D=0\quad\text{on}\quad  (-r,0)\times\{0\}\,.
 \end{cases}
 \end{equation}
Recall that by the definitions of $M$, $D$ and $P'$, we have 
$$P_+(z)=M(z)+D(z)
\qquad\text{and}\qquad
P_-(z)=\overline{M(\bar z)}-\,\overline{D(\bar z)}
$$
and moreover
$$
\partial_{x'} v_\pm= \text{Re}(Q_\pm)=\frac{2\,{\rm Re}(P_\pm) }{|P_\pm+i|^2}
\qquad \text{and}\qquad
\partial_{y'} v_\pm=- \text{Im}(Q_\pm) =\frac{1-|P_\pm|^2}{|P_\pm+i|^2}\,.
$$
We set $g_\pm(x'):=\eta_\pm^{-1}(x')$ and $\tilde f_\pm(x'):=f_\pm(g_\pm(x'))$. Since, 
$$f_\pm(x)=v_\pm(\eta_\pm(x),0)\qquad\text{and}\qquad\displaystyle \eta_\pm'(x)=\frac{1}{\partial_{y'}v_\pm\big(\eta_\pm(x),0\big)},$$ 
we get that
 $$
 \tilde f_\pm(x')=v_\pm(x',0)
 \qquad \text{and}\qquad
 g'_\pm(x')=\partial_{y'}v_\pm(x',0)\,.
 $$
In particular,
 $$
  \tilde f_\pm(x')=\int_0^{x'} \partial_{x'} v_\pm(t,0)\,dt=\int_{0}^{x'} \frac{2\,{\rm Re}(P_\pm(t))}{|P_\pm(t)+i|^2}\,dt
 $$
 and 
 $$
 g_\pm(x')=\int_0^{x'} \partial_{y'} v_\pm(t,0)\,dt=\int_0^{x'}\frac{1-|P_\pm(t)|^2}{|P_\pm(t)+i|^2}\,dt\,.
 $$
 Now, by \eqref{e:to-phase-D-half} and \eqref{e:two-phase-M-full}, we have that 
%  if $x'\geq0$, we have ${\rm Re}(D(x'))= 0= {\rm Im}(M(x'))$, so that at the points $x'\ge 0$, we have
 $$M=\text{Re}\,M\quad\text{and}\quad D=i\,\text{Im}\,D\quad\text{on}\quad [0,r)\times \{0\}\,,$$
 which gives that on $[0,r)\times \{0\}$, $P_+=P_-$, precisely:
 $${\rm Re}(P_+)={\rm Re}(P_-)=M\quad\text{and}\quad {\rm Im}(P_+)={\rm Im}(P_-)=\text{Im}\,D=-iD.$$
This implies that
 $$
 \tilde f_{\pm}(x')=\int_0^{x'} \frac{2\,M(t)}{M^2(t)+\big(1+ \text{Im}\,D(t)\big)^2} \,dt\,,
 $$
 so that $\tilde f_{+}\equiv \tilde f_-$ on $\{x'\ge 0\}$. Similarly,
 $$
 g_\pm(x')=\int_0^{x'}\frac{1-M^2(t)-\big(\text{Im}\,D(t)\big)^2}{M^2(t)+\big(1 +\text{Im}\,D(t)\big)^2}\,dt\,,
 $$
 which again implies that $g_+\equiv g_-$. Combining these two identities, we get that 
 $$f_+\equiv f_-\quad\text{on}\quad\{x'\ge 0\}.$$
 Using again \eqref{e:to-phase-D-half} and \eqref{e:two-phase-M-full}, this time for  $x'\leq0$, we get that 
  $$M=\text{Re}\,M\quad\text{and}\quad D=\text{Re}\,D\quad\text{on}\quad(-r,0)\times\{0\}\,,$$
  which implies that $P_\pm$ are both real and 
  $$P_+=M+D\quad\text{and}\quad P_-=M-D\quad\text{on}\quad (-r,0)\times\{0\}\,.$$
As above, we compute
  $$
  \tilde f_{\pm}(x')=2\int_0^{x'} \frac{M(t)\pm D(t)}{1+(M(t)\pm D(t))^2} \,dt\qquad\text{and}\qquad
  g_\pm(x')=\int_0^{x'}\frac{1-(M(t)\pm D(t))^2}{1+(M(t)\pm D(t))^2}\,dt\,.
 $$
We now define
$$
\Psi(x'):=\frac{\tilde f_+(x')-\tilde f_-(x')}2= 2\int_0^{x'} D(t) \, \frac{1+D^2-M^2}{(1+M^2+D^2)^2-4 D^2 M^2} \,dt
$$
% (notice that if $D=0$ then this is zero and if $M=0$ then this the integral of $D$.) 
 and 
 $$
 \Phi(x'):=\frac{\tilde f_+(x')+\tilde f_-(x')}2= 2\int_0^{x'} M(t) \, \frac{1+M^2-D^2}{(1+M^2+D^2)^2-4 D^2 M^2} \,dt
 $$
 and we notice that:
 \begin{itemize}
 \item 	$\Phi$ is an analytic function of the form $\Phi(x')=O(x'^2)$;
 \item $\Psi$ is of the form $\Psi(x')=(x')^{\sfrac32}\Theta(x')$, where $\Theta$ is an analytic function.
 \end{itemize}	
Also, let
 \begin{align*}
 \psi:=\frac{g_+(x')-g_-(x')}2
 %&=\frac12\int_0^{x'}\frac{(M(t)+D(t))^2-1}{(M(t)+ D(t))^2+1}-\frac{(M(t)-D(t))^2-1}{(M(t)- D(t))^2+1}\,dt\\
 &=\int_0^{x'}\frac{-4D(t)M(t)}{(M^2+D^2+1)^2-4 M^2 D^2}\,dt\,,
 \end{align*}
 and
\begin{align*}
\phi:= \frac{g_+(x')+g_-(x')}2
 %	&=\frac12\int_0^{x'}\frac{(M(t)+D(t))^2-1}{(M(t)+ D(t))^2+1}+\frac{(M(t)-D(t))^2-1}{(M(t)- D(t))^2+1}\,dt\\
 	&=\int_0^{x'}\frac{1-\big(M^2-D^2\big)^2}{(M^2+D^2+1)^2-4 M^2 D^2}\,dt\,,
 \end{align*}
where, as above, 
\begin{itemize}
	\item 	$\phi$ is an analytic function of the form $\phi(x')=x'+o(x')$;
	\item $\psi$ is of the form $\psi(x')=(x')^{\sfrac52}\theta(x')$, where $\theta$ is an analytic function.
\end{itemize}
Therefore we have
 $$
 \begin{cases}
 f_+\big(\phi(x')+\psi(x')\big)-f_-\big(\phi(x')-\psi(x')\big)=2\Psi(x'),\\
  f_+\big(\phi(x')+\psi(x')\big)+f_-\big(\phi(x')-\psi(x')\big)=2\Phi(x'),
 \end{cases}
 $$
and
 $$f_+\big(\phi(x')+\psi(x')\big)=\Phi(x')+\Psi(x')\qquad\text{and}\qquad f_-\big(\phi(x')-\psi(x')\big)=\Phi(x')-\Psi(x').$$
 Since $\eta_\pm$ is the inverse of $\phi\pm\psi$, we get that $\eta_\pm$ of the form
$$\eta_\pm(x)=x+x^{\sfrac52}\beta_{\pm}(x^{\sfrac12}),$$
where $\beta_\pm$ are analytic functions. Thus, 
$$f_\pm(x)=\Phi\Big(x+x^{\sfrac52}\beta_{\pm}\big(x^{\sfrac12}\big)\Big)\pm\Psi\Big(x+x^{\sfrac52}\beta_{\pm}\big(x^{\sfrac12}\big)\Big),$$
which concludes the proof of \cref{c:symmetric} and \cref{thm:ass4} \ref{item:bbb3}.\qed

\subsection{Remarks on the non-symmetric case}For non-symmetric solutions, or more generally when different weights are put on the gradients of $u_\pm$ (as in the more general Alt-Caffarelli-Friedman energy, see for instance \cite{DSV}), we cannot guarantee the validity of \cref{c:symmetric}, and so branch points of the original problem might not be sent into branch points of the thin two-membrane problem. In fact, suppose that $(x_0,f_\pm(x_0))$ and $(x_1,f_\pm(x_1))$ are two consecutive points in $\mathcal B_2(u_+,u_-)$ such that $x_0<x_1$ and 
$$\begin{cases}
f_+(x)=f_-(x)\quad\text{when}\quad x\le x_0,\\
f_+(x)>f_-(x)\quad\text{when}\quad x_0<x<x_1,\\
f_+(x)=f_-(x)\quad\text{when}\quad  x\ge x_1.\end{cases}$$ 
Suppose that $x_0=0$ and define $\eta_\pm$ as in \eqref{e:eta-two-phase}. Now, we might have that 
\begin{equation}\label{e:eta-final-remarks}
\eta_+(x_1)=\int_{0}^{x_1}\sqrt{1+(f_+'(t))^2}\,dt>\int_{0}^{x_1}\sqrt{1+(f_-'(t))^2}\,dt=\eta_-(x_1).
\end{equation}
But then, for a generic point $x'$ between $\eta_-(x_1)$ and $\eta_+(x_1)$, we get that $|\nabla v_+|(x',0)=1$, while $|\nabla v_-|(x',0)<1$, so that the equations \eqref{e:equations-Q-two-phase} for $Q_\pm$ are not satisfied. 

\smallskip

We notice that the symmetry assumption in point \ref{item:aaa} of \cref{thm:ass4} is precisely what prevents \eqref{e:eta-final-remarks} from happening. In particular, this assumption is fulfilled when 
\begin{equation}\label{e:final-remarks-f}
f_+(x)+f_-(x)\equiv 0\quad\text{on}\quad B_1'.
\end{equation}
% Still, it is not clear whether these are the only solutions satisfying the hypothesis in point \ref{item:aaa} of \cref{thm:ass4}. 
 We also notice that \eqref{e:final-remarks-f} is equivalent to assuming that $\eta_+\equiv\eta_-$.

\begin{lemma}
	Suppose that $\eta_+\equiv\eta_-$ on $(-1,1)$, then $u_\pm\colon B_1^\pm\cup B_1'\to \R$ and moreover
	$$
	u_-(x,y)=-u_+(x,-y)\quad \text{and}\quad f_+(x)+f_-(x)=0\quad\text{for every}\quad x\in (-1,1).$$
\end{lemma}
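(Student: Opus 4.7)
The plan is to show that, under $\eta_+\equiv\eta_-=:\eta$, the conformal hodograph $v_-$ of $u_-$ coincides with the reflection $\tilde v(x',y'):=-v_+(x',-y')$ of the hodograph $v_+$ of $u_+$; inverting the hodograph will then give $u_-(x,y)=-u_+(x,-y)$, from which $f_-=-f_+$ and the inclusions $\Omega_u^\pm\subset B_1^\pm$ follow by reading off the positivity sets. Following \cref{ss:2phase}, I set $Q_\pm:=\partial_{x'}v_\pm-i\partial_{y'}v_\pm$, $P_\pm:=-i(Q_\pm+i)/(Q_\pm-i)$, $P'(z'):=\overline{P_-(\bar z')}$ and $M:=(P_++P')/2$. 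From \eqref{eq:etauu2} one has $\eta_\pm'(x)=1/\partial_{y'}v_\pm(\eta_\pm(x),0)$, so the assumption $\eta_+\equiv\eta_-$ is equivalent to $\partial_{y'}v_+(x',0)=\partial_{y'}v_-(x',0)$ on $\{y'=0\}\cap B_r$; on $\mathcal C_v=\eta(\{f_+=f_-\})$ this, combined with the boundary-value matching $v_+(x',0)=v_-(x',0)$, gives $Q_+=Q_-$ and hence $P_+=P_-$ there. Since $P_\pm$ is real on the non-contact part (where $|Q_\pm|=1$) and $P_+=P_-$ on $\mathcal C_v$, the function $M$ is real on $\{y'=0\}$ and so extends holomorphically to $B_r$ by Schwarz reflection.

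The key step will be to show that $M\equiv 0$ on $B_r$. On a non-contact component $I\subset\{y'=0\}\cap B_r$, $|Q_\pm|=1$ makes $P_\pm$ real, and $\text{Im}(Q_+)=\text{Im}(Q_-)$ combined with the unit-circle constraint forces $P_+^2=P_-^2$. Since each $P_\pm$ extends holomorphically across $I$ (being real-valued on it), exactly one of $P_+\equiv P_-$ and $P_+\equiv -P_-$ holds on the whole of $I$. The alternative $P_+\equiv P_-$ translates via \eqref{eq:etauu2} into $f_+'\equiv f_-'$ on the $\eta$-preimage of $I$; since $f_+=f_-$ at the branch points bounding this preimage, this would force $f_+\equiv f_-$ there, contradicting $I\subset\{y'=0\}\setminus\mathcal C_v$. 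Hence $P_+\equiv -P_-$ on $I$, and $M\equiv 0$ on $I$. Because $0$ is a branch point of $u$, at least one non-trivial non-contact interval accumulates at $0$; by real-analyticity of $M\big|_{\{y'=0\}}$ the vanishing propagates along the real axis, and then $M\equiv 0$ on $B_r$ by holomorphy.

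From $M\equiv 0$ one reads off $P_+(z)=-\overline{P_-(\bar z)}$ on $\{y'\ge 0\}$, equivalently $P_-(z)=-\overline{P_+(\bar z)}$ on $\{y'\le 0\}$. A short direct computation, substituting $\tilde Q(z'):=-\overline{Q_+(\bar z')}$ into the definition of $P$, shows that the conformal hodograph of $\tilde u(x,y):=-u_+(x,-y)$ satisfies $\tilde P(z')=-\overline{P_+(\bar z')}$; the identity above thus rearranges to $\tilde P\equiv P_-$, hence $\tilde Q=Q_-$, and after normalizing $\tilde v(0,0)=0=v_-(0,0)$ one obtains $\tilde v\equiv v_-$. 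Inverting the hodograph transform yields $u_-(x,y)=\tilde u(x,y)=-u_+(x,-y)$, so $\Omega_u^-=\{y<-f_+(x)\}$ and therefore $f_-=-f_+$ on $(-1,1)$; combined with $f_+\ge f_-$ this gives $f_+\ge 0\ge f_-$, yielding the inclusions $u_\pm:B_1^\pm\cup B_1'\to\R$. The hard part will be the sign determination $P_+\equiv -P_-$ on non-contact and the need for at least one non-trivial non-contact interval accumulating at $0$ to initiate the analytic continuation of $M$; both are guaranteed by the standing assumption that $0$ is a genuine branch point of the two-phase problem (a separate Cauchy-data matching argument would be required in the degenerate fully-contact case).
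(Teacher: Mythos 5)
Your route is genuinely different from the paper's and substantially heavier. The paper argues directly on the real axis: from $\eta_+'\equiv\eta_-'$ and \eqref{eq:etauu2} it gets $\partial_{y'}v_+=\partial_{y'}v_-$ along $\{y'=0\}$; at contact points $\nabla v_+=\nabla v_-$ holds automatically (Case~1 of \cref{lem:2phlemma}), at non-contact points it follows from $|\nabla v_\pm|=1$ together with the equal normal derivatives; then \eqref{eq:etauu2} gives $f_+'\equiv-f_-'$, which integrates to $f_++f_-\equiv0$, and finally $u_-(x,y)+u_+(x,-y)$ is harmonic in $\Omega_u^-$ with vanishing Cauchy data on $\partial\Omega_u^-$, hence identically zero. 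You instead pass to the holomorphic objects $P_\pm$, $M$, $D$, try to prove $M\equiv0$ by a sign determination on non-contact intervals plus analytic continuation, and then invert the hodograph of the reflection. To your credit, your endpoint argument (ruling out one of the two alternatives $P_+\equiv\pm P_-$ on a non-contact component by integrating back to a point where $f_+=f_-$) addresses a sign ambiguity that the paper's own proof silently passes over: $|\nabla v_+|=|\nabla v_-|=1$ and $\partial_{y'}v_+=\partial_{y'}v_-$ only give $\partial_{x'}v_+=\pm\partial_{x'}v_-$ pointwise.

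There are, however, two problems. The concrete gap is the degenerate case in which the non-contact set is empty (or, more generally, contains no non-trivial interval): your proof of $M\equiv0$ is initialized exclusively on non-contact intervals, and you yourself flag that a ``separate Cauchy-data matching argument would be required in the degenerate fully-contact case.'' The lemma's hypothesis is only $\eta_+\equiv\eta_-$ on $(-1,1)$; nothing excludes $f_+\equiv f_-$ on the whole interval, and in that case your argument produces nothing, whereas the conclusion still forces $f_\pm\equiv0$. The paper's proof covers this uniformly, because at contact points the identity $\nabla v_+=\nabla v_-$ combined with the opposite signs in \eqref{eq:etauu2} yields $f_+'=-f_-'$ there as well, which together with $f_+'=f_-'$ (from $f_+\equiv f_-$) kills $f'$. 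Second, your sign bookkeeping is inconsistent with the literal statement of \eqref{eq:etauu2}: under that convention $P_+\equiv P_-$ gives $f_+'\equiv-f_-'$ (the desired relation), not $f_+'\equiv f_-'$, so the alternative you discard and the one you keep are interchanged, and correspondingly it is $D$ rather than $M$ that should vanish. The chain rule $f_\pm=v_\pm(\eta_\pm(\cdot),0)$ supports your reading, and the paper's own later computations are not fully consistent on this point either, but you must fix one convention and verify that the quantity you annihilate is really the one encoding $f_++f_-$. As written, the proof is incomplete.
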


\begin{proof}
	Since $\eta'_+\equiv \eta_-'$, \eqref{eq:etauu2} implies that $\partial_{y'}v_+(\eta_+(x),0)=\partial_{y'}v_-(\eta_-(x),0)$. In particular, 
	\begin{itemize}
		\item if $f_+(x)>f_-(x)$, then $|\nabla v_\pm(\eta(x),0)|=1$ and so $\partial_x v_+(\eta_+(x),0)=\partial_x v_-(\eta_-(x),0)$;
		\item if $f_+(x)=f_-(x)$, then $\partial_x v_+(\eta_+(x),0)=\partial_x v_-(\eta_-(x),0)$.
	\end{itemize}
	In conclusion we have that 
	$$
	\nabla v_+ (\eta_+(x),0)=\nabla v_- (\eta_-(x),0)\,,
	$$
	which using again \eqref{eq:etauu2} implies that $f'_+(x)\equiv -f'_-(x)$. Since $f_\pm(0)=0$, integrating we get
	$$
	f_+(x)+f_-(x)=\int_0^x (f'_+(t)+f'_-(t))\,dt=0\,.
	$$
Finally, $u_-(x,y)+u_+(x,-y)$ is a harmonic function in $\Omega_u^-$ which vanishes together with its gradient on $\partial\Omega_u^-$. This implies that $u_-(x,y)+u_+(x,-y)=0$ for every $(x,y)\in\Omega_u^-$.
\end{proof}


\begin{thebibliography}{999}
		
		
		\bibitem{ACS} I. Athanasopoulos, L. Caffarelli, S. Salsa. {\it The structure of the free boundary for lower
			dimensional obstacle problems.} Amer. J. Math. {\bf 130} (2008), 485--498.
		
		
		\bibitem{AIM} K. Astala, T. Iwaniec, G. Martin. {\it Elliptic Partial Differential Equations and Quasiconformal Mappings in the Plane} Princeton University Press (2009).
		
				\bibitem{CRS} L. Caffarelli, X. Ros-Oton, J. Serra. {\it Obstacle problems for integro-differential operators: regularity of solutions and free boundaries.} Invent. Math. {\bf 208} (2017), 1155--1211.
		
		\bibitem{chang} {S.~Chang.} {\it Two dimensional area minimizing integral currents are classical minimal surfaces.} Journal of the AMS {\bf 1} (1988), 699--779.
		
		
		\bibitem{chang-lara-savin} {H.~Chang-Lara, O.~Savin.} {\it Boundary regularity for the free boundary in the one-phase problem.} {Proceedings of the Special Session in the AMS Contemporary Mathematics series} (2018).
		
		
		\bibitem{DSS1} {C.~De Lellis, E.~Spadaro, L. Spolaor.} {\it Regularity theory for $2$-dimensional almost minimal currents I: Lipschitz approximation} Trans. Amer. Math. Soc. {\bf 370} (2018), 1783--1801
		
		\bibitem{DSS2} {C.~De Lellis, E.~Spadaro, L. Spolaor.} {\it Regularity theory for $2$-dimensional almost minimal currents II: Branched Center Manifold} Ann. PDE {\bf 3}, 18 (2017). .
		
		
		\bibitem{DSS3} {C.~De Lellis, E.~Spadaro, L. Spolaor.} {\it Regularity theory for $2$-dimensional almost minimal currents III: Blowup} J. Differential Geom. {\bf 116(1)} (2020), 125--185.
		
		\bibitem{DSV}{G. De Philippis, L. Spolaor, B. Velichkov.} {\it Regularity of the free boundary for the two-phase Bernoulli problem.} Invent. Math. {\bf 225} (2021), 347--394.
		
		\bibitem{DFS} L. Di Fazio, E. N. Spadaro. {\it Regularity of solutions to nonlinear thin and boundary obstacle problems.} Preprint arXiv:2105.01005 (2021).
				
		\bibitem{Fe} X. Fernandez-Real. {\it $C^{1,\alpha}$ estimates for the fully nonlinear Signorini problem.}
		Calc. Var. Partial Differential Equations (2016) 55:94,
		DOI 10.1007/s00526-016-1034-3.
		
		\bibitem{F} J. Frehse. {\it On Signorini's problem and variational problems with thin obstacles.} Ann. Scuola Norm. Sup. Pisa {\bf 4} (1977), 343--362.


\bibitem{FoSp} M. Focardi, E. Spadaro. \it On the measure and the structure of the free boundary of the lower dimensional obstacle problem. \rm Arch. Rat. Mech. Anal. {\bf230} (2018), 125--184.


\bibitem{GarLin}
{N.~Garofalo, F.H.~Lin}.
\newblock {\it Monotonicity Properties of Variational Integrals, $A_p$ Weights and Unique Continuation.}
\newblock {Indiana Math. Journal} {\bf  35} (2) (1986), 245--268.


\bibitem{gape}
{N.~Garofalo, A.~Petrosyan}.
\newblock {\it Some new monotonicity formulas and the singular set in the lower dimensional obstacle problem.}
\newblock {Invent. Math.} {\bf 177} (2) (2009), 415--461.




		\bibitem{L} H. Lewy. {\it On the coincidence set in variational inequalities.} J. Differential Geom.  {\bf 6} (1972), 497--501.

		
		\bibitem{MS} E. Milakis, L. Silvestre. {\it Regularity for the nonlinear Signorini problem.} Adv. Math. {\bf 217}
		(2008), 1301--1312.
		
		\bibitem{RS} X. Ros-Oton, J. Serra. {\it The structure of the free boundary in the fully nonlinear thin obstacle problem.} Adv. Math. {\bf316} (2017), 710--747.
		
		
		\bibitem{sakai1} M. Sakai. {\it Regularity of a boundary having a Schwarz function.} 
		Acta Math. {\bf 166} (1991), 263--297. 
		
		\bibitem{sakai2}
		M. Sakai.
		\it Regularity of free boundaries in two dimensions. \rm
		 {Ann. Scuola Norm. Sup. Pisa} {\bf20} (3) (1993), 323--339.
		
		\bibitem{SV} L.~Spolaor, B.~Velichkov.  {\it An epiperimetric inequality for the regularity of some free boundary problems: the $2$-dimensional case.}  Comm. Pure Appl. Math. {\bf72} (2) (2019) 375--421.
		
	\end{thebibliography}
\end{document}